\newcommand{\e}{\varepsilon}
\newcommand{\dist}{\mathrm{dist}\,}
\newcommand{\diam}{\mathrm{diam}\,}
\newcommand{\IN}{\mathbb N}
\newcommand{\IZ}{\mathbb Z}
\newcommand{\IR}{\mathbb R}
\newcommand{\EC}{\mathsf{E\!C}}
\newcommand{\Ra}{\Rightarrow}
\newcommand{\w}{\omega}
\newcommand{\lev}{\mathrm{lev}}
\newcommand{\Deg}{\mathrm{Deg}}
\newcommand{\mesh}{\mathrm{mesh}\,}
\newcommand{\C}{\mathscr C}
\newcommand{\A}{\mathcal A}
\newcommand{\suc}{\mathrm{pred}}
\newcommand{\Ent}{\Theta}
\newcommand{\ent}{\theta}
\newcommand{\id}{\mathrm{i\kern-1pt d}}
\newcommand{\upa}{\uparrow}
\newcommand{\da}{\downarrow}
\newcommand{\dupa}{\updownarrow}
\newcommand{\Lev}{\mathrm{Lev}}
\newcommand{\pr}{\mathrm{pr}}
\newcommand{\vx}{\mathrm{vx}}
\newtheorem{theorem}{Theorem}
\newtheorem{corollary}[theorem]{Corollary}
\newtheorem{proposition}{Proposition}[section]
\newtheorem{lemma}[proposition]{Lemma}
\newtheorem{claim}[proposition]{Claim}
\newtheorem{cor}[proposition]{Corollary}
\theoremstyle{definition}
\newtheorem{example}{Example}
\newtheorem{remark}[proposition]{Remark}
\title[Characterizing the Cantor bi-cube]{Characterizing the Cantor bi-cube\\ in asymptotic categories}
\author{Taras Banakh and Ihor Zarichnyi}
\address{Instytut Matematyki, Uniwersytet Humanistyczno-Przyrodniczy Jana Kochanowskiego w Kielcach, Poland, and Department of Mathematics, Ivan Franko National University of Lviv, Ukraine}
\email{tbanakh@yahoo.com; T.O.Banakh@gmail.com; ihor.zarichnyj@gmail.com}
\subjclass{54E35, 54E40}
\begin{document}

\begin{abstract} We present the characterizations of metric spaces that are micro-, macro- or bi-uniformly equivalent to the extended Cantor set
$\EC=\big\{\sum_{i=-n}^\infty\frac{2x_i}{3^i}:n\in\IN ,\;(x_i)_{i\in\IZ}\in\{0,1\}^\IZ\big\}\subset\IR$, which is bi-uniformly equivalent to the Cantor bi-cube $2^{<\IZ}=\{(x_i)_{i\in\IZ}\in \{0,1\}^\IZ:\exists n\;\forall i\ge n\;x_i=0\}$ endowed with the metric $d((x_i),(y_i))=\max_{i\in\IZ}2^i|x_i-y_i|$.
The characterizations imply that any two (uncountable) proper isometrically homogeneous ultrametric spaces are coarsely (and bi-uniformly) equivalent. This implies that any two countable locally finite groups endowed with proper left-invariant metrics are coarsely equivalent. For the proof of these results we develop a technique of towers which can have an independent interest.
\end{abstract}

\maketitle


\section{Introduction}
This paper was motivated by the problem of coarse classification of countable locally finite groups posed in \cite{BDHM}, repeated in \cite[Problem 1606]{OP2}, and communicated to the authors by I.V.~Protasov. As we shall see later, a crucial role in this classification belongs to the extended Cantor set
$$\EC=\Big\{\sum_{i=-n}^\infty\frac{2x_i}{3^i}:n\in\IN ,\;(x_i)_{i\in\IZ}\in\{0,1\}^\IZ\Big\}\subset\IR.$$ So firstly we present  four characterizations of the extended Cantor set $\EC$ in various categories of metric spaces and then we shall apply these characterizations to the problem of coarse and bi-uniform classifications of locally finite groups (more generally of isometrically homogeneous metric spaces).

We shall mainly work in the categories of proper metric spaces and their (macro-, micro-, or bi-) uniform maps. It will be convenient to introduce such maps using the notion of the oscillation $\w_f$ of a function $f:X\to Y$  between metric spaces $X$ and $Y$. By definition, the {\em oscillation} of $f$ is the function $\w_f:[0,\infty)\to[0,\infty]$ assigning to each $\delta\ge 0$ the (finite or infinite) number $$\w_f(\delta)=\sup\{\dist(f(x),f(x')):x,x'\in X,\;\dist(x,x')\le\delta\}.$$
Here $\dist(x,x')$ denotes the distance between points $x,x'$ in a metric space.

A map $f:X\to Y$ is called
\begin{itemize}
\item {\em uniformly continuous} (or else {\em micro-uniform}\/) if for any $\forall \e>0\;\exists\delta>0$ with $\w_f(\delta)\le\e$;
\item {\em macro-uniform} if $\forall \delta<\infty\;\exists \e<\infty$ with $\w_f(\delta)\le\e$;
\item {\em bi-uniform} if $f$ is macro- and micro-uniform.
\end{itemize}

Those notions induce the corresponding equivalences of metric spaces. Namely, a map $f:X\to Y$ between two metric spaces is called
\begin{itemize}
\item a {\em uniform homeomorphism} if $f$ is bijective and both $f$ and $f^{-1}$ are uniformly continuous;
\item a {\em bi-uniform equivalence} if $f$ is bijective and both $f$ and $f^{-1}$ are bi-uniform maps;
\item a {\em coarse equivalence} if $f$ is macro-uniform and there exists a macro-uniform map $g:Y\to X$ such that $\dist(f\circ g,\id_Y)<\infty$ and $\dist(g\circ f,\id_X)<\infty$.
\end{itemize}

Observe that a map $f:X\to Y$ is a bi-uniform equivalence if and only if $f$ is both a uniform homeomorphism and a coarse equivalence.

We have defined morphisms and isomorphisms in our categories and now will switch to the objects.

 We shall say that a metric space $X$
\begin{itemize}
\item is {\em isometrically homogeneous} if for any two points $x,y\in X$ there is a bijective isometry  $f:X\to X$ such that $f(x)=y$;
\item is {\em proper} if $X$ is unbounded but for every $x_0\in X$ and $r\in[0,+\infty)$ the closed $r$-ball $B_r(x_0)=\{x\in X:\dist(x,x_0)\le r\}$ centered at $x_0$ is compact;
\item has {\em bounded geometry} if there is $\delta<\infty$ such that for every $\e<\infty$ there in $n\in\IN$ such that each $\e$-ball in $X$ can be covered by $\le n$ balls of radius $\delta$;
\item an {\em ultrametric space} if $d(x,y)\le\max\{d(x,z),d(z,y)\}$ for any points $x,y,z\in X$.
\end{itemize}

Ultrametric spaces often appear as natural examples of zero-dimensional spaces (in various senses), see \cite{BDHM}. We shall be interested in four notions of zero-dimensionality: topological, micro-uniform, macro-uniform (=asymptotic), and bi-uniform.

First, given a positive real number $s$ define the {\em $s$-connected component} of a point $x$ of a metric space $X$ as the set $C_s(x)$ of all points $y\in X$ that can be linked with $x$ by a chain of points $y=z_0,z_1,\dots,z_n=x$ such that $\dist(z_{i-1},z_i)\le s$ for all $i\le n$.
By $\C_s(X)=\{C_s(x):x\in X\}$ we denote the family of the (pairwise disjoint) $s$-connected components of $X$. Given a family $\C$ of subsets of a metric space $X$ let $$\mesh \C=\sup\limits_{C\in\C}\diam(C).$$

For a metric space $X$ and positive real numbers $\delta\le\e$ consider the following cardinal characteristics:
$$
\ent_\delta^\e(X)=\min_{x\in X}|C_\e(x)/\C_\delta(X)|\mbox{ \ and \ } \Ent_\delta^\e(X)=\sup_{x\in X}|C_\e(x)/\C_\delta(X)|\mbox{ \ where}
$$
$$|C_\e(x)/\C_\delta(X)|=|\{C_\delta(y):y\in C_\e(x)\}|$$is the number of
$\delta$-connected components composing the $\e$-connected component $C_\e(x)$ of $x$.

If the metric space $X$ is isometrically homogeneous, then $\theta_\delta^\e(X)=\Theta_\delta^\e(X)=|C_\e(x)/\C_\delta(X)|$ for every $x\in X$. If $X$ is an ultrametric space, then the $\e$-connected component $C_\e(x)$ of a point $x$ coincides with the closed $\e$-ball $B_\e(x)$ and thus $|C_\e(x)/\C_\delta(X)|$ is just the number of $\delta$-balls composing the $\e$-ball $B_\e(x)$. Observe that an ultrametric space $X$ has bounded geometry if and only if there is $\delta<\infty$ such that $\Theta_\delta^\e(X)$ if finite for every finite $\e\ge\delta$.

We shall say that a metric space $X$ has
\begin{itemize}
\item {\em topological dimension zero} if the family of closed-and-open subsets forms a base of the topology of $X$;
\item {\em micro-uniform dimension zero} if $\forall \e>0\;\exists\delta>0$ with $\mesh \C_\delta(X)\le\e$;
\item {\em macro-uniform} (or else {\em asymptotic}) {\em dimension zero}  if $\forall \delta<\infty\;\exists\e<\infty$ with $\mesh \C_\delta(X)\le\e$;
\item {\em bi-uniform dimension zero} if $X$ has both micro-uniform and macro-uniform dimensions zero.
\end{itemize} It follows that a metric space $X$ of bi-uniform dimension zero has  topological, micro-uniform, and macro-uniform dimensions zero.

If $X$ is an ultrametric space, then for every $s>0$ the $s$-connected component $C_s(x)$ of a point $x\in X$ coincides with the closed $s$-ball $B_s(x)$. So $X$ has bi-uniform dimension zero (because $\mesh\C_s(X)=s$ for all $s>0$). On the other hand, each metric space of asymptotic (bi-uniform) dimension zero is coarsely (bi-uniformly) equivalent to an ultrametric space, see Theorem 4.3 of \cite{BDHM}.

The class of proper metric spaces of bi-uniform dimension zero contains an interesting object
$$\EC =\Big\{\sum_{i=-n}^\infty\frac{2x_i}{3^i}:(x_i)_{i\in\IZ}\in\{0,1\}^\IZ,\;n\in\IN\Big\}\subset\IR$$
called the {\em extended Cantor set}. The extended Cantor set $\EC $ coincides with the image of the {\em Cantor bi-cube}
$$2^{<\IZ}=\{(x_i)_{i\in\IZ}\in\{0,1\}^\IZ: \exists n\in\IN\;\; \forall i>n\;\;\; (x_i=0)\}$$
under the map
$$f:2^{<\IZ}\to\EC,\;\;\; f:(x_i)_{i\in\IZ}\mapsto\sum_{i=-\infty}^\infty 2\cdot 3^i\cdot x_i.$$This map determines a bi-uniform equivalence between the extended Cantor set $\EC $ and the Cantor bi-cube $2^{<\IZ}$ endowed with the ultrametric
$$d((x_i),(y_i))=\max_{i\in\IZ}2^{i}|x_i-y_i|.$$
The Cantor bi-cube can be written as the product
$2^{<\IZ}=2^{\w}\times 2^{<\IN}$ of {\em the Cantor micro-cube}
$$2^\w=\{(x_i)_{i\in\IZ}\in 2^{<\IZ}:\mbox{$x_i=0$ for all $i>0$}\}$$ and the {\em  Cantor macro-cube}
$$2^{<\IN}=\{(x_i)_{i\in\IZ}\in 2^{<\IZ}:\mbox{$x_i=0$ for all $i\le 0$}\}.$$
The Cantor micro-cube can be identified with the standard Cantor cube $\{0,1\}^\w$.
It is well-known that the Cantor micro-cube $2^\w$ contains a micro-uniform copy of each zero-dimensional compact metric space \cite[7.8]{Ke}. The Cantor macro-cube $2^{<\IN}$ has a similar property: it contains a macro-uniform copy of each asymptotically zero-dimensional metric space of bounded geometry, see Theorem 3.11 of \cite{DZ}. This picture is completed by the following

\begin{theorem}[Universality of the Cantor bi-Cube]\label{univer} A metric space $X$ is bi-uniformly equivalent to a subspace of the Cantor bi-cube $2^{<\IZ}$ if and only if $X$ is a metric space of bi-uniform dimension zero such that $\Theta^\e_\delta(X)<\infty$ for all $0<\delta\le \e<\infty$.
\end{theorem}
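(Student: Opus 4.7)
The Cantor bi-cube $2^{<\IZ}$ is an ultrametric space, so it has bi-uniform dimension zero; a direct count shows that each $2^n$-ball splits into exactly $2^{n-m}$ disjoint $2^m$-balls for $m<n$, so $\Theta_{2^m}^{2^n}(2^{<\IZ})=2^{n-m}$ is finite. Both properties pass to subspaces, and a bi-uniform equivalence merely rescales the parameters $\delta,\e$ by the oscillations of $f$ and $f^{-1}$, so both conditions are preserved.

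\textbf{Sufficiency.} I would build a bi-uniform embedding $f\colon X\hookrightarrow 2^{<\IZ}$ via a tower of nested finite partitions. Fix a base point $x_0\in X$. Combining both dimension-zero hypotheses with the finite-branching hypothesis, I can interleave bi-infinite sequences $(s_n)_{n\in\IZ}\subset(0,\infty)$ and $(j_n)_{n\in\IZ}\subset\IZ$, both strictly increasing with $s_n,j_n\to\pm\infty$, satisfying $s_n\asymp 2^{j_n}$, $\mesh\C_{s_n}(X)\le s_{n+1}$, and $\Theta_{s_n}^{s_{n+1}}(X)\le 2^{j_{n+1}-j_n}$. The first three are secured by micro- and macro-uniform dimension zero (by choosing $s_n$ small or large enough at each step), and the last by enlarging the gap $j_{n+1}-j_n$ whenever needed, since $\Theta_{s_n}^{s_{n+1}}(X)$ is finite by hypothesis.

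Set $\mathcal P_n=\C_{s_n}(X)$ and let $P_n(x)\in\mathcal P_n$ denote the component of $x$. Then $\mesh\mathcal P_n\le s_{n+1}$ forces $\mathcal P_n$ to refine $\mathcal P_{n+1}$. For each $Q\in\mathcal P_{n+1}$, I fix an injective labelling of its $\mathcal P_n$-children by words in $\{0,1\}^{j_{n+1}-j_n}$, chosen coherently along the trunk of $x_0$: whenever $Q=P_{n+1}(x_0)$, the child $P_n(x_0)$ receives the zero word. Then define $f(x)=(x_i)_{i\in\IZ}$ by writing the label of $P_n(x)$ inside $P_{n+1}(x)$ into the coordinates $j_n+1,\dots,j_{n+1}$. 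Since $d(x,x_0)<\infty$, for all sufficiently large $n$ one has $P_n(x)=P_n(x_0)$, whence $x_i=0$ for $i>j_n$, so $f(x)$ lies in $2^{<\IZ}$. Injectivity follows from $\bigcap_{n}P_n(x)=\{x\}$ (as $\mesh\mathcal P_n\to 0$ when $n\to-\infty$), and bi-uniformity reduces to the equivalence ``$P_n(x)=P_n(y)$ iff $f(x),f(y)$ agree in every coordinate $>j_n$'', which under $s_n\asymp 2^{j_n}$ translates to two-sided quantitative control of distances.

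The main obstacle is the coherent choice of labels along the trunk $(P_n(x_0))_{n\in\IZ}$, which is what forces $f(x)$ into $2^{<\IZ}$ rather than all of $\{0,1\}^\IZ$: this is the essential use of the tower structure. The interleaving of the three sequences in the first step is routine but fiddly, and the bi-uniform verification is then a direct translation under $s_n\asymp 2^{j_n}$.
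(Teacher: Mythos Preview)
Your approach is correct and rests on the same underlying idea as the paper's---encode $X$ via a tower of nested $s_n$-component partitions and embed that tower into a binary one---but your execution is more direct and self-contained. The paper first passes to the completion of $X$, forms the canonical $L$-tower $T_X^L$ for $L=\{2^n:n\in\IZ\}$, embeds it (via Proposition~\ref{p11}) into an auxiliary homogeneous tower $T$ whose degrees dominate those of $T_X^L$, and then invokes the full Bi-Uniform Characterization Theorem~\ref{bU-char} (whose proof in turn relies on the hard Key Lemma~\ref{MainLemma}) to identify $\partial T$ with $2^{<\IZ}$. You instead embed the partition tower directly into a level subtower $T_2^M$ of the binary tower by choosing the gaps $j_{n+1}-j_n$ large enough to absorb $\Theta_{s_n}^{s_{n+1}}(X)$; this bypasses the appeal to Theorem~\ref{bU-char} entirely and gives a substantially more elementary argument. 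Your ``trunk at $x_0$'' device is exactly what, in the paper's language, makes the tower embedding land in ${\da}\theta_T$ for a fixed $\theta_T$, forcing the image into $2^{<\IZ}$ rather than $\{0,1\}^\IZ$.

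One caveat: the condition $s_n\asymp 2^{j_n}$ is neither needed nor in general achievable---the gaps $j_{n+1}-j_n$ are dictated by $\Theta_{s_n}^{s_{n+1}}(X)$, which is decoupled from the ratios $s_{n+1}/s_n$. What you actually use for the bi-uniform verification is only that both sequences $(s_n)$ and $(2^{j_n})$ are $\updownarrow$-cofinal in $(0,\infty)$, so that $d(x,y)\le s_n\Rightarrow d(f(x),f(y))\le 2^{j_n}$ and $d(f(x),f(y))\le 2^{j_n}\Rightarrow d(x,y)\le\mesh\,\mathcal P_n\le s_{n+1}$ give the two-sided control. Drop the $\asymp$ clause and the argument stands.
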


Now we turn to the problem of characterization of the spaces $2^\w$, $2^{<\IN}$, and $2^{<\IZ}$ in various categories. The characterization of the Cantor micro-cube
is well-known and is due to Brouwer \cite[7.4]{Ke}:

\begin{theorem}[Topological Characterization of the Cantor Cube] For a metric space $X$ the following conditions are equivalent:
\begin{enumerate}
\item $X$ is topologically equivalent to the Cantor micro-cube $2^\w$;
\item $X$ is micro-uniformly equivalent to $2^\w$;
\item $X$ is bi-uniformly equivalent to $2^\w$;
\item $X$ is a zero-dimensional metric compact space without isolated points.
\end{enumerate}
\end{theorem}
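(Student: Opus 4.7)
The plan is to establish the cycle of implications $(1)\Ra(4)\Ra(3)\Ra(2)\Ra(1)$. The arrows $(3)\Ra(2)\Ra(1)$ unpack directly from the definitions, and $(1)\Ra(4)$ is immediate because compactness, zero-dimensionality, and absence of isolated points are topological invariants all enjoyed by $2^\w$. The closing arrow $(1)\Ra(3)$ comes for free from compactness: once $X$ is known to be homeomorphic to $2^\w$ it is itself compact and of finite diameter, so any homeomorphism $h\colon X\to 2^\w$ and its inverse are automatically uniformly continuous (standard compactness argument) and macro-uniform (their oscillations never exceed the finite diameter of the target).

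The substantive step is $(4)\Ra(3)$. Given a compact, zero-dimensional, perfect metric space $X$, the plan is to construct inductively a nested sequence $\U_0,\U_1,\U_2,\dots$ of finite clopen partitions of $X$ with $\U_0=\{X\}$ such that for every $n\ge 0$:
\begin{itemize}
\item[(a)] each $U\in\U_n$ is the disjoint union of exactly two non-empty members of $\U_{n+1}$;
\item[(b)] $\mesh\U_n\le 2^{-n}$.
\end{itemize}
Indexing $\U_n=\{U_s:s\in\{0,1\}^n\}$ coherently with (a), the address map
$$h\colon X\to 2^\w,\qquad h(x)=(x_n)_{n\in\w}\text{ with }x\in U_{(x_0,\dots,x_{n-1})}\text{ for every }n,$$
is a well-defined bijection: (b) yields continuity and injectivity, while non-emptiness of each $U_s$ together with compactness of $X$ yields surjectivity. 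By the compactness remark from the previous paragraph, this homeomorphism is automatically a bi-uniform equivalence.

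The only part that needs genuine care is the production of the partitions. For (b) one uses that in a compact zero-dimensional metric space every open cover admits a finite clopen refinement, so the mesh can be driven below $2^{-n}$ at each stage. For (a) one uses that any non-empty clopen subset $U\subset X$ inherits perfectness from $X$, is therefore infinite, and consequently splits into two non-empty clopen pieces. The modest obstacle is reconciling these two demands, which is arranged by first refining $\U_n$ to a clopen partition of mesh $\le 2^{-(n+1)}$ and then further subdividing its cells until the binary-tree shape of (a) is reached; perfectness of $X$ guarantees at every step that such a subdivision exists.
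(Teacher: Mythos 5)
The paper does not prove this theorem: it is attributed to Brouwer and cited from Kechris \cite[7.4]{Ke}, so there is no in-paper argument to compare against. Your proposal follows the standard textbook route, and the chain of implications, the well-definedness/bijectivity of the address map, and the compactness observation that any homeomorphism is automatically a bi-uniform equivalence are all fine. The genuine gap is in $(4)\Rightarrow(3)$: requirements (a) and (b) on the partitions $\U_n$ are jointly unsatisfiable.

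Indeed (a) forces $|\U_n|=2^n$ while (b) forces $\mesh\U_n\le 2^{-n}$, and already $n=1$ can fail. Take $X=\{0,1,2\}^\w$ with the ultrametric $d(x,y)=2^{-\min\{k\,:\,x_k\ne y_k\}}$; this is a perfect zero-dimensional compact metric space of diameter $1$, so it satisfies (4). Any clopen subset of diameter $\le 1/2$ is contained in one of the three cylinders $\{x:x_0=i\}$, $i\in\{0,1,2\}$, so there is no partition of $X$ into two clopen sets of diameter $\le 1/2$. The remedy you sketch --- refine $\U_n$ to mesh $\le 2^{-(n+1)}$ and then ``subdivide until the binary-tree shape of (a) is reached'' --- does not repair this: the obstruction is that a cell of $\U_n$ meets too \emph{many} cells of the refined partition, and subdividing only increases that number, while merging cells to bring it down to two would blow the mesh back up.

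The fix is to drop the quantitative bound and require only $\mesh\U_n\to 0$; this still makes the address map a continuous bijection from a compact space onto a Hausdorff one, hence a homeomorphism, which compactness then upgrades to a bi-uniform equivalence for free. Concretely: given $\U_n$, each cell $U$ is nonempty, perfect, compact, and zero-dimensional, so it admits a finite clopen partition $\V_U$ of mesh $<\frac12\mesh\U_n$ with at least two pieces; split cells of $\V_U$ further (possible by perfectness) until $|\V_U|=2^m$ for one exponent $m$ common to all $U\in\U_n$; organize each $\V_U$ into a binary tree of depth $m$ under $U$, and let the levels of these trees define $\U_{n+1},\dots,\U_{n+m}$. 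Then (a) holds throughout, $\mesh\U_{n+m}<\frac12\mesh\U_n$, and $\mesh\U_n\to 0$. With this weakened (b), the rest of your argument goes through verbatim.
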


Since the Cantor bi-cube $2^{<\IZ}=2^\w\times 2^{<\IN}$, and the Cantor macro-cube $2^{<\IN}$ is discrete, the preceding theorem implies the following (well-known) topological characterization of the Cantor bi-cube $2^{<\IZ}$:

\begin{theorem}[Topological Characterization of the Cantor bi-Cube]\label{top-char} A metric space $X$ is topologically equivalent to the Cantor bi-cube $2^{<\IZ}$ if and only if
\begin{enumerate}
\item $X$ has topological dimension zero;
\item $X$ is separable, locally compact and non-compact;
\item $X$ has no isolated points.
\end{enumerate}
\end{theorem}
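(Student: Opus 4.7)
My plan is to reduce the statement to the Topological Characterization of the Cantor Cube (Brouwer's theorem) stated immediately above. The necessity direction is routine: since $2^{<\IZ}=2^\w\times 2^{<\IN}$ is the product of a Cantor cube and a countably infinite discrete space, it is immediately zero-dimensional, separable, non-compact, and free of isolated points, and it is locally compact because each point admits a compact neighborhood of the form $2^\w\times\{y\}$. So conditions (1)--(3) hold for $2^{<\IZ}$.

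For the sufficiency I would partition $X$ into countably many pairwise disjoint compact clopen pieces, each homeomorphic to $2^\w$. Separability of $X$ gives the Lindel\"of property, and combined with local compactness and zero-dimensionality it lets me extract a countable cover $\{U_n\}_{n\in\IN}$ of $X$ by compact clopen subsets. Applying the standard disjointification $L_n=U_n\setminus\bigcup_{k<n}U_k$ then produces a partition $X=\bigsqcup_{n\in\IN}L_n$; each $L_n$ remains compact and clopen since the clopen algebra of $X$ is closed under finite Boolean operations.

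Next I would observe that each $L_n$ inherits the absence of isolated points from $X$: because $L_n$ is open in $X$, any point isolated in $L_n$ would also be isolated in $X$. Hence every nonempty $L_n$ is a nonempty zero-dimensional compact metric space without isolated points, so Brouwer's characterization gives $L_n\cong 2^\w$. Non-compactness of $X$ forces infinitely many nonempty $L_n$, so after discarding empties one obtains $X\cong\IN\times 2^\w$ topologically. On the other hand $2^{<\IN}$ is a countably infinite discrete space, so $2^{<\IZ}=2^\w\times 2^{<\IN}\cong 2^\w\times\IN\cong X$, finishing the argument.

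I do not anticipate a serious obstacle; every step is a standard application of zero-dimensional topology under the countability hypotheses. The only point worth making explicit in writing is that the disjointification preserves compactness and clopenness, which is automatic in the zero-dimensional setting, and that a single partition piece cannot degenerate to a singleton (again ruled out by the no-isolated-points hypothesis combined with openness of $L_n$).
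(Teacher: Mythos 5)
Your proof is correct and follows the same route the paper indicates: the paper simply remarks that the theorem follows from Brouwer's characterization of $2^\w$ together with the decomposition $2^{<\IZ}=2^\w\times 2^{<\IN}$ with $2^{<\IN}$ countably infinite and discrete, and your argument fills in precisely that reduction. The disjointification of a countable compact-clopen cover and the application of Brouwer's theorem to each nonempty piece is the standard way to make the paper's one-line remark into a full proof.
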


In the next three theorems we present characterizations of the Cantor bi-cube in the micro-, macro-, and bi-uniform categories.

\begin{theorem}[Micro-Uniform Characterization of the Cantor bi-Cube]\label{mU-char} A metric space $X$ is micro-uniformly equivalent to the Cantor bi-cube $2^{<\IZ}$ if and only if
\begin{enumerate}
\item $X$ is a non-compact complete metric space of micro-uniform dimension zero;
\item there is $\e>0$ such that $\Theta^\e_\delta(X)$ is finite for all positive $\delta\le \e$ and $\lim\limits_{\delta\to+0}\theta^\e_\delta(X)=\infty$.
\end{enumerate}
\end{theorem}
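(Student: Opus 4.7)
The properties in (1) and (2) are invariants of micro-uniform equivalence, so I only need to verify them in $2^{<\IZ}$ itself. Non-compactness, completeness, and micro-uniform dimension zero are immediate since $2^{<\IZ}$ is ultrametric with $\mesh\C_\delta(2^{<\IZ})=\delta$. For (2), take $\e=1/2$: each $\e$-component of $2^{<\IZ}$ is an isometric copy of the Cantor micro-cube $2^\omega$, and by isometric homogeneity $\Theta^\e_\delta=\theta^\e_\delta$ equals the finite number of $\delta$-balls in $2^\omega$, which tends to $\infty$ as $\delta\to 0$. These quantitative conditions transfer to $X$ via the moduli $\omega_f$ and $\omega_{f^{-1}}$ of a uniform homeomorphism $f:X\to 2^{<\IZ}$, up to a rescaling of~$\e$.

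\textbf{Sufficiency.} Assume (1) and (2) hold for $X$. The plan is to construct a bijective uniform homeomorphism $f:X\to 2^{<\IZ}$ by a \emph{tower} of refining partitions. First, each $\e$-component $A_\alpha$ of $X$ (with $\e$ from condition (2)) is compact: it is closed in the complete space $X$ and, by $\Theta^\e_\delta(X)<\infty$ combined with $\mesh\C_\delta(X)\to 0$, it is totally bounded. Second, $A_\alpha$ is a zero-dimensional compact metric space; the hypothesis $\lim_{\delta\to 0}\theta^\e_\delta(X)=\infty$, applied across a tower of refining partitions, should rule out isolated points in $A_\alpha$, so by Brouwer's topological characterization of $2^\omega$, $A_\alpha$ is homeomorphic---and hence, being compact, bi-uniformly equivalent---to the Cantor micro-cube $2^\omega$. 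Non-compactness of $X$ forces the family $\{A_\alpha\}$ to be infinite, matching the countably infinite family of $\e$-components in $2^{<\IZ}$.

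\textbf{Assembly and main obstacle.} Build refining partitions $\mathscr P_0=\{A_\alpha\}\succ\mathscr P_1\succ\mathscr P_2\succ\cdots$ of $X$ by splitting each cell into exactly two clopen subcells (possible because each cell is a Cantor set), arranged so that $\mesh \mathscr P_n\to 0$. Perform the analogous binary splitting on $2^{<\IZ}$ via its natural $2^{-n}$-ball partitions. A level-preserving bijection between the two binary trees of partitions then induces, via inverse limits, a bijection $f:X\to 2^{<\IZ}$. The main obstacle is performing the splittings \emph{uniformly} across infinitely many $\e$-components so that a single modulus controls $\omega_f$ and $\omega_{f^{-1}}$; this is precisely what the tower technique announced in the abstract is designed to deliver, by prescribing up-front the scales at which $\mathscr P_n$ is refined, uniformly across all branches of the tower.
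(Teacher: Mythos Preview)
Your sufficiency argument has a real gap at the step ``should rule out isolated points in $A_\alpha$.'' The hypothesis $\theta^\e_\delta(X)\to\infty$ only says that each $\e$-component decomposes into many $\delta$-pieces; it says nothing about whether a given $\delta$-piece subdivides further at smaller scales. Concretely, let $C\subset[0,1]$ be the ternary Cantor set and take $X=\{-\tfrac12\}\cup\bigcup_{n\ge 0}(3n+C)\subset\IR$. This $X$ is complete, non-compact, of micro-uniform dimension zero, and with $\e=1$ one has $\Theta^1_\delta(X)<\infty$ and $\theta^1_\delta(X)\to\infty$; yet the $\e$-component $\{-\tfrac12\}\cup C$ contains the isolated point $-\tfrac12$, so Brouwer's characterization is unavailable there and your assembly cannot start. (The same example shows that the paper's own proof is delicate at the point where it replaces the $\e$ from hypothesis~(2) by a smaller one in order to force $\C_\e(X)$ to be infinite: condition~(2) is stated for a single $\e$ and need not survive shrinking.)

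The paper's route is different and avoids Brouwer entirely. After truncating the metric to be bounded by $1$, it sets $\alpha_0=\beta_0=\e$ and by reverse induction builds interleaved scale sequences $(\alpha_k)_{k\le 0}$ for $X$ and $(\beta_k)_{k\le 0}$ for $Y=2^{<\IZ}$ satisfying $\theta^{\alpha_k}_{\alpha_{k-1}}(X)\ge\Theta^{\beta_k}_{\beta_{k-1}}(Y)$ and $\theta^{\beta_k}_{\beta_{k-1}}(Y)\ge\Theta^{\alpha_{k+1}}_{\alpha_k}(X)$. These alternating inequalities allow a surjective \emph{tower immersion} $\varphi:T_X^A\to T_Y^B$ to be constructed one level at a time, and the general result on boundary maps of tower immersions then makes $\partial\varphi$ a bi-uniform equivalence. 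The uniform control you correctly flag as the main obstacle is delivered precisely by these $\theta/\Theta$ inequalities, simultaneously over all branches; the argument never inspects the topological type of individual $\e$-components, so the isolated-point question simply does not arise in the paper's scheme.
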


 \begin{theorem}[Macro-Uniform Characterization of the Cantor bi-cube]\label{MU-char} A metric space $X$ is macro-uniformly equivalent to the Cantor bi-cube $2^{<\IZ}$ if and only if
\begin{enumerate}
\item $X$ has macro-uniform dimension zero;
\item there is $\delta>0$ such that $\Theta^\e_\delta(X)$ is finite for all positive $\e\ge \delta$ and $\lim\limits_{\e\to\infty}\theta^\e_\delta(X)=\infty$.
\end{enumerate}
\end{theorem}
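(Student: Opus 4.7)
For the forward direction, the Cantor bi-cube is ultrametric, so each $\e$-component coincides with a closed $\e$-ball of diameter at most $\e$, giving (1); taking $\delta=1$, a $2^n$-ball in $2^{<\IZ}$ allows free variation of the first $n$ positive coordinates, yielding $\theta^{2^n}_1(2^{<\IZ})=\Theta^{2^n}_1(2^{<\IZ})=2^n$, which verifies (2). Both (1) and (2) are macro-uniform invariants, since a macro-uniform map sends each $\e$-component into a $\w_f(\e)$-component, transporting bounds on $\mesh\C_\e$ and on $\theta^\e_\delta,\Theta^\e_\delta$ up to a rescaling of $\e$ and $\delta$.

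For the reverse direction, since $2^\w$ has diameter at most $1$, the projection $2^{<\IZ}=2^\w\times 2^{<\IN}\to 2^{<\IN}$ is a macro-uniform equivalence, so it suffices to show $X$ is macro-uniformly equivalent to the Cantor macro-cube $2^{<\IN}$. Fix $\delta$ as in (2) and let $X'\subset X$ be a maximal $\delta$-separated subset; the inclusion $X'\hookrightarrow X$ is a macro-uniform equivalence, and $X'$ is discrete with $\Theta^\e_\delta(X')\le\Theta^\e_\delta(X)<\infty$ for all $\e\ge\delta$. Using (1), choose an unbounded sequence $\delta=\e_0<\e_1<\e_2<\cdots$ with $\mesh\C_{\e_n}(X)\le\e_{n+1}$. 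The partitions $\mathcal{P}_n=\C_{\e_n}(X')$ form a filtration whose Hasse diagram is a rooted forest: each level-$n$ cell lies in a unique level-$(n+1)$ cell, the local branching is bounded by $\Theta^{\e_{n+1}}_{\e_n}(X)\le\Theta^{\e_{n+1}}_\delta(X)<\infty$, and the leaf-count below every sufficiently deep node tends to $\infty$ by $\theta^{\e_n}_\delta(X)\to\infty$.

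On $2^{<\IN}$ the canonical filtration by $2^n$-balls is a binary-branching rooted forest with $2^n$ leaves below every level-$n$ cell. To align the two towers, I would refine the $X'$-tower by parenthesizing the children at each node into a binary subtree of depth at most $\lceil\log_2\Theta^{\e_{n+1}}_\delta(X)\rceil$, producing a binary-branching tower on $X'$; this refinement is the content of the towers technique developed earlier in the paper. A back-and-forth argument then yields a level-preserving isomorphism of the refined $X'$-forest with the canonical forest of $2^{<\IN}$, and selecting an arbitrary representative in each $\mathcal{P}_0$-cell transports this isomorphism to a bijection $f:X'\to 2^{<\IN}$. By construction, points at distance at most $\e_n$ in $X'$ lie in a common $\mathcal{P}_n$-cell and hence map into a common $2^{m(n)}$-ball in $2^{<\IN}$, where $m(n)$ is the cumulative refinement depth through level $n$, so $f$ is macro-uniform; the symmetric argument gives macro-uniformity of $f^{-1}$. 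Composing with the macro-uniform equivalences between $X$ and $X'$, and between $2^{<\IN}$ and $2^{<\IZ}$, yields the required equivalence.

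The main obstacle is the combinatorial alignment of the refined $X'$-tower with the canonical binary tower of $2^{<\IN}$: the two rooted forests have matching local shape (binary branching and growing sub-tree sizes) but must be identified level-by-level in a way compatible with the filtrations. This is exactly what the tower theory developed in the paper is designed to systematize; modulo that framework, the remaining steps reduce to routine verifications that the chosen scales and representatives produce macro-uniform maps.
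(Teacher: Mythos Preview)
Your forward direction is fine and matches the paper's approach via Lemmas~\ref{l1} and~\ref{l2}.

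The reverse direction has a genuine gap. Your proposed mechanism---refine each node's children into a binary subtree of uniform depth $\lceil\log_2\Theta^{\e_{n+1}}_\delta(X)\rceil$, then run a back-and-forth to obtain a \emph{level-preserving isomorphism} with the canonical binary forest---cannot succeed as stated. The tower $T^L_X$ is not homogeneous: two nodes on the same level may have different numbers of children (anything between $\theta^{\e_{n+1}}_{\e_n}(X)$ and $\Theta^{\e_{n+1}}_{\e_n}(X)$), hence different leaf-counts below them. After padding each node's fan to a binary subtree of the maximal depth, some of those subtrees are necessarily incomplete, so the refined forest contains interior nodes with a single child. Such a forest is simply not isomorphic, level by level, to the complete binary tower of $2^{<\IN}$, and no back-and-forth will manufacture a bijection that does not exist.

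What the paper actually does is different in kind, not just in packaging. Instead of an isomorphism it builds a \emph{surjective tower immersion} $\varphi:T^L_X\to T_2^M$, a level-preserving map that is only ``almost injective'' (preimages of a point lie under a common parent one level up). Constructing such an immersion is the content of the Key Lemma (Lemma~\ref{MainLemma}), and it is the hardest argument in the paper: one must choose the level sets $L$ and $M$ so that $\deg_0^k(T^L_X)\ge 4^{k+5}\deg_0^{k-1}(T_2^M)$ and $\deg_0^k(T_2^M)\ge 4^k\Deg_0^k(T^L_X)$, and then at each node distribute children among the children of its image while keeping the leaf-counts within controlled multiplicative error. This balancing, not a back-and-forth, is what absorbs the inhomogeneity of $X$. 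Your sketch defers precisely this step (``this is exactly what the tower theory is designed to systematize''), but the tower theory you invoke produces immersions, not the isomorphism your argument requires; the two are not interchangeable here.
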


\begin{theorem}[Bi-Uniform Characterization of the Cantor bi-Cube]\label{bU-char} A metric space $X$ is bi-uniformly equivalent to the Cantor bi-cube $2^{<\IZ}$ if and only if
\begin{enumerate}
\item $X$ is a complete metric space of bi-uniform dimension zero;
\item $\Theta^\e_\delta(X)$ is finite for all $0<\delta\le\e<\infty$;
\item $\lim\limits_{\e\to\infty}\theta^\e_\delta(X)=\infty$ for all $\delta<\infty$;
\item $\lim\limits_{\delta\to+0}\theta^\e_\delta(X)=\infty$ for all $\e>0$.
\end{enumerate}
\end{theorem}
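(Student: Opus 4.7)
The necessity direction is a routine verification: the Cantor bi-cube $2^{<\IZ}$ is an ultrametric space (hence of bi-uniform dimension zero), is complete, and its combinatorial structure gives $\Theta^\e_\delta(2^{<\IZ})=2^{k(\delta,\e)}$ for a nonnegative integer depending essentially on the ratio $\e/\delta$, which is finite and tends to infinity as $\e\to\infty$ with $\delta$ fixed, or as $\delta\to 0$ with $\e$ fixed. Each of the four conditions is a bi-uniform invariant since a bi-uniform equivalence distorts the scale parameters $\delta,\e$ only by a bounded amount on each side.

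For the sufficiency direction I would first replace $X$ by a bi-uniformly equivalent ultrametric space, invoking Theorem~4.3 of \cite{BDHM} as quoted in the introduction. In the ultrametric setting the $s$-connected components are exactly the closed balls $B_s(x)$, the family $\C_s(X)$ is a genuine partition, and the invariants $\theta^\e_\delta$ and $\Theta^\e_\delta$ simply count how many $\delta$-balls refine an $\e$-ball. Note that conditions (1)+(2) alone place $X$ bi-uniformly inside $2^{<\IZ}$ by Theorem~\ref{univer}; the remaining task is to promote this embedding to a bijective bi-uniform equivalence, and conditions (3) and (4) are exactly the non-degeneracy inputs that allow one to ``fill out'' the ambient Cantor bi-cube.

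The heart of the argument is a bi-infinite tower construction. Choose scales $\cdots<\delta_{-1}<\delta_0<\delta_1<\cdots$ with $\delta_n\to 0$ as $n\to-\infty$ and $\delta_n\to\infty$ as $n\to+\infty$, and form the nested bi-infinite system of partitions $\PP_n=\C_{\delta_n}(X)$. Condition (2) ensures that each cell of $\PP_{n+1}$ contains only finitely many cells of $\PP_n$; condition (3) forces the tower to grow unboundedly in the macro direction (cells eventually contain arbitrarily many $\delta_n$-subcells), and condition (4) forces every cell to split into arbitrarily many pieces in the micro direction. After passing to a subsequence of scales I arrange that each cell of $\PP_{n+1}$ contains exactly $k_n\ge 2$ cells of $\PP_n$ for a prescribed integer sequence $(k_n)_{n\in\IZ}$. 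A parallel tower $\PP'_n$ is constructed on $2^{<\IZ}$ by interleaving the natural binary scales so as to realize the same branching sequence $(k_n)$. Choosing arbitrary bijections between children of corresponding cells yields a tree isomorphism of the two towers, and the nested-intersection property of ultrametric balls together with completeness of both sides turns this tree isomorphism into a bijection $f:X\to 2^{<\IZ}$. The map $f$ is bi-uniform because matching levels of the two towers carry comparable scales.

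The principal obstacle is the simultaneous combinatorial matching of the branching numbers at both ends of a bi-infinite tower: the natural tower on $2^{<\IZ}$ has branching $2$ at every level, whereas on $X$ one controls only finite branching of unspecified size. Reconciling these by inserting intermediate scales into $2^{<\IZ}$ and grouping binary levels must be done uniformly and compatibly at the micro end and the macro end at once, without losing either end in the process. This is exactly the bookkeeping that the technique of towers developed earlier in the paper is designed to handle; once the scales are aligned, the tree isomorphism and its extension through completeness follow essentially automatically.
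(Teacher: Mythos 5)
The necessity direction is fine; the gap is in your sufficiency argument, at the step ``after passing to a subsequence of scales I arrange that each cell of $\PP_{n+1}$ contains exactly $k_n\ge 2$ cells of $\PP_n$.'' This cannot be done. Once $X$ is replaced by an ultrametric space, the tower whose $n$-th level is $\C_{\delta_n}(X)$ is in general \emph{inhomogeneous}: within the same level, distinct $\delta_{n+1}$-balls contain different numbers of $\delta_n$-balls, anywhere between $\theta^{\delta_{n+1}}_{\delta_n}(X)$ and $\Theta^{\delta_{n+1}}_{\delta_n}(X)$, and no choice of subsequence of scales forces all these counts to coincide. (Consider, say, a proper ultrametric space in which some $\e$-balls contain $2$ and others contain $3$ children at every scale while all hypotheses (1)--(4) hold.) Since you cannot equalize the branching, there is no ``prescribed integer sequence $(k_n)_{n\in\IZ}$'' realizing the tower of $X$, the target tower on $2^{<\IZ}$ cannot simply be mirrored, and the ``tree isomorphism by choosing arbitrary bijections between children'' step has nothing to attach to. This is not a bookkeeping issue that the tower formalism handles automatically; it is the central difficulty.

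The paper resolves it with Lemma~\ref{MainLemma} (the Key Lemma), which under the inequalities $\deg^k_0(T)\ge 4^{k+5}\deg^{k-1}_0(H)$ and $\deg^k_0(H)\ge 4^k\Deg^k_0(T)$ produces a \emph{surjective tower immersion} $T\to H$ into a homogeneous $H$ rather than an isomorphism. An immersion is allowed to merge siblings (it is only ``almost injective'': merged points must have a common grandparent), and Proposition~\ref{p8} shows that this controlled merging still yields a macro-uniform, and when the tower is $\da$-unbounded a bi-uniform, equivalence on boundaries. The proof of the Key Lemma is a genuinely non-trivial counting argument that partitions trapezia so that the proportions $\deg_0(A_k)/\deg^k_0(H)$ stay within multiplicative bounds at every level; and in the micro direction the extension uses the two-sided sandwich $\Theta^{\lambda_{k+1}}_{\lambda_k}(X)\le 2^{m_k-m_{k-1}}\le\theta^{\lambda_k}_{\lambda_{k-1}}(X)$, not a mere count match. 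These are exactly the pieces your sketch elides, and without them the construction of a bijection $X\to 2^{<\IZ}$ does not go through.

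A secondary point: conditions (1)+(2) alone do not place $X$ in $2^{<\IZ}$ via Theorem~\ref{univer} and then (3)+(4) ``promote'' the embedding; Theorem~\ref{univer} and Theorem~\ref{bU-char} are independent results in the paper (indeed Theorem~\ref{univer} is deduced using Theorem~\ref{bU-char}), and an embedding produced by the universality statement has no reason to be extendable to a bijective equivalence. The surjectivity must come from the immersion argument itself.
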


It is clear that any metric space $X$ that is bi-uniformly equivalent to the Cantor bi-cube $2^{<\IZ}$ is micro-uniformly and macro-uniformly equivalent to $2^{<\IZ}$. The converse is not true.

\begin{example}\label{ex1} Let $\w$ be the space of finite ordinals, endowed with the discrete 2-valued metric. The metric space $2^\w\times\w\times 2^{<\IN}$ is micro-uniformly and macro-uniformly equivalent to $2^{<\IZ}$ but fails to be bi-uniformly equivalent to $2^{<\IZ}$.
\end{example}

Characterization Theorems~\ref{top-char}--\ref{bU-char} of the Cantor bi-cube allows us to detect copies of $2^{<\IZ}$ among isometrically homogeneous metric spaces:

\begin{corollary}\label{t4} An isometrically homogeneous metric space $X$ is
\begin{enumerate}
\item micro-uniformly equivalent to $2^{<\IZ}$ if and only if $X$ is homeomorphic to $2^{<\IZ}$ if and only if $X$ is uncountable, separable, locally compact, non-compact, and has topological dimension zero;
\item macro-uniformly equivalent to $2^{<\IZ}$ if and only if $X$ is unbounded, has bounded geometry and has asymptotic dimension zero;
\item bi-uniformly equivalent to $2^{<\IZ}$ if and only if $X$ is proper, uncountable, and has bi-uniform dimension zero.
\end{enumerate}
\end{corollary}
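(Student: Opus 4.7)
The plan is to derive (1), (2) and (3) from, respectively, the Characterization Theorems~\ref{mU-char}, \ref{MU-char} and \ref{bU-char}. Throughout I rely on two elementary observations: for an isometrically homogeneous metric space $X$ one has $\theta^\e_\delta(X)=\Theta^\e_\delta(X)$, and $X$ either has no isolated points or is discrete (in the latter case, separability forces $X$ to be countable); also, the $\delta$-connected components are clopen and partition $X$, so any compact subset meets only finitely many of them.

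For (1), the implications ``micro-uniformly equivalent to $2^{<\IZ}$'' $\Rightarrow$ ``homeomorphic to $2^{<\IZ}$'' $\Rightarrow$ (the topological conditions listed) are immediate from the continuity of micro-uniform maps and from Theorem~\ref{top-char}, using that an isometrically homogeneous, separable, uncountable space has no isolated points. The content is the reverse direction, where I verify the hypotheses of Theorem~\ref{mU-char}. Local compactness together with homogeneity yields an $r>0$ with $B_r(x)$ compact for every $x$; topological zero-dimensionality supplies a clopen set $V\subseteq B_{r/2}(x_0)$ containing $x_0$, and I set $\e:=\tfrac12\dist(V,X\setminus V)>0$. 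Then $C_\e(x_0)\subseteq V$ is a non-empty compact zero-dimensional space without isolated points---a Cantor set---from which both the finiteness of $\Theta^\e_\delta(X)$ (a compact set meets only finitely many clopen components) and $\theta^\e_\delta(X)\to\infty$ as $\delta\to 0^+$ (separating finitely many chosen points into distinct $\delta$-components using $\mesh\C_\delta\to0$) follow routinely.

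For (2), unboundedness and asymptotic dimension zero are inherited from $2^{<\IZ}$ as macro-uniform invariants, and bounded geometry is extracted from condition~(2) of Theorem~\ref{MU-char}: if $\Theta^\e_\delta(X)<\infty$ and $D:=\mesh\C_\delta(X)<\infty$, then $B_\e(x_0)\subseteq C_\e(x_0)$ is covered by the $\Theta^\e_\delta(X)$ many $\delta$-components it meets, each of diameter $\le D$, hence by finitely many $D$-balls. Conversely, for $X$ isometrically homogeneous, unbounded, of bounded geometry and asymptotic dimension zero, I fix $\delta$ witnessing bounded geometry: boundedness of $C_\e(x_0)$ (from asymptotic dimension zero) combined with bounded geometry yields $\Theta^\e_\delta(X)<\infty$, while unboundedness combined with $M:=\mesh\C_\delta(X)<\infty$ produces, inside $B_\e(x_0)$ for arbitrarily large $\e$, arbitrarily many points pairwise at distance $>M$, and hence in distinct $\delta$-components.

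For (3), properness, uncountability and bi-uniform dimension zero are preserved under bi-uniform equivalence and are enjoyed by $2^{<\IZ}$, giving one direction. Conversely, conditions~(1)--(3) of Theorem~\ref{bU-char} are verified as in Parts~(1)--(2), using that $C_\e(x_0)$ is closed and bounded and hence compact by properness. Condition~(4) reduces to showing that $C_\e(x_0)$ is \emph{uncountable} for every $\e>0$: if some $C_\e(x_0)$ were countable, then homogeneity together with the finiteness of $\Theta^{\e'}_\e(X)$ for $\e'\ge\e$ would make $X=\bigcup_{n\in\IN} C_n(x_0)$ countable, contradicting the hypothesis; once $C_\e(x_0)$ is known to be an uncountable compact zero-dimensional space without isolated points, it is a Cantor set and the Part~(1) argument applies. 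The main technical obstacle is precisely this last step: upgrading the cardinality hypothesis ``$X$ is uncountable'' to the divergence $\theta^\e_\delta(X)\to\infty$ as $\delta\to 0^+$ for \emph{all} $\e>0$ (not merely for sufficiently large $\e$) forces the detour through uncountability of every $\e$-component and requires both the homogeneity of $X$ and the finiteness of $\Theta^{\e'}_\e(X)$ simultaneously.
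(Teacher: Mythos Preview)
Your proposal is correct and matches the paper's approach: the paper states Corollary~\ref{t4} without proof, as an immediate consequence of Theorems~\ref{top-char}--\ref{bU-char}, and you carry out precisely that derivation with the necessary detail. The only lacuna is in Part~(1), where before invoking Theorem~\ref{mU-char} you must also verify its hypothesis~(1)---completeness and micro-uniform dimension zero of $X$---which you neither mention nor prove (indeed you use $\mesh\C_\delta\to 0$ without justification); both follow readily, since homogeneity plus local compactness gives uniform local compactness (hence completeness via Cauchy sequences eventually lying in a compact ball), and then your own clopen-neighborhood construction, repeated with clopen $V$ of arbitrarily small diameter inside a fixed compact ball, yields $\mesh\C_\delta(X)\to 0$.
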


Now we apply this classification result to the macro- and bi-uniform classification of countable groups, viewed as metric spaces endowed with perfect left-invariant metrics. J.Smith \cite{Smith} observed that each countable group carries a perfect left-invariant metric and such a metric is unique up to the bi-uniform equivalence. A.Dranishnikov and J.Smith \cite{DS} proved that a countable group $G$ endowed with a proper left-invariant metric has asymptotic dimension zero if and only if $G$ is {\em locally finite} in the sense that each finitely-generated subgroup of $G$ is finite. The authors of \cite{BDHM} classified countable locally finite groups up to the bi-uniform equivalence and posed the problem of classification of countable locally finite groups up to the coarse equivalence. The same problem was repeated by J.Sanjurjo in \cite[Problem 1606]{OP2}. The following corollary of Corollary~\ref{t4}(2) answers this problem.

\begin{corollary}\label{c2} Any two countable locally finite groups endowed with proper left-invariant metrics are coarsely equivalent.
\end{corollary}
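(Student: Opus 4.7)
The plan is to deduce Corollary~\ref{c2} directly from Corollary~\ref{t4}(2), which characterizes the isometrically homogeneous metric spaces macro-uniformly equivalent to $2^{<\IZ}$ as exactly those that are unbounded, of bounded geometry, and of asymptotic dimension zero. Once every countable locally finite group $(G,d)$ with a proper left-invariant metric is shown to enjoy these three properties, the two groups in the corollary are each macro-uniformly equivalent to the single reference space $2^{<\IZ}$ and hence to each other.

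To verify the hypotheses, note first that left-translations act transitively on $G$ by isometries, so $(G,d)$ is isometrically homogeneous; unboundedness is already built into the paper's notion of a proper metric space; and asymptotic dimension zero is precisely the conclusion of the Dranishnikov--Smith theorem cited in the introduction. The only non-trivial point is bounded geometry, for which the key step is to show that $(G,d)$ is \emph{uniformly discrete}. Since $(G,d)$ is locally compact Hausdorff (by properness), it is Baire, and the decomposition $G=\bigcup_{g\in G}\{g\}$ is a countable union of closed sets; hence at least one singleton $\{g\}$ has non-empty interior, meaning $g$ is an isolated point of $G$. If $B_\delta(g)=\{g\}$, then left-translating by any $h\in G$ yields $B_\delta(hg)=\{hg\}$, so $B_\delta(g')=\{g'\}$ for every $g'\in G$. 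Bounded geometry follows: each closed ball $B_\e(g)$ is compact and discrete, hence finite, and has cardinality $n(\e):=|B_\e(e)|$ independent of $g$ (again by homogeneity), so it is covered by its $n(\e)$ constituent singletons, each of which is a $\delta$-ball.

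With all three conditions of Corollary~\ref{t4}(2) verified, any countable locally finite group with proper left-invariant metric is macro-uniformly equivalent to $2^{<\IZ}$; such a bijection, being macro-uniform with macro-uniform inverse, is automatically a coarse equivalence. Composing equivalences $G_1\to 2^{<\IZ}$ and $2^{<\IZ}\to G_2$ yields the desired coarse equivalence. The main mathematical work of the proof lies entirely upstream, in the macro-uniform characterization Theorem~\ref{MU-char} and its homogeneous refinement Corollary~\ref{t4}(2); the present deduction amounts to a short verification of hypotheses, the only step of real content being the Baire-category argument for uniform discreteness.
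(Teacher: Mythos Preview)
Your proposal is correct and follows exactly the route the paper intends: the corollary is stated as an immediate consequence of Corollary~\ref{t4}(2), and you have supplied precisely the verification of its hypotheses (isometric homogeneity via left translations, unboundedness from the paper's definition of properness, asymptotic dimension zero from Dranishnikov--Smith, and bounded geometry via the Baire/uniform-discreteness argument). One small wording issue: a macro-uniform equivalence in the paper's sense is a multi-map rather than a bijection, but Proposition~\ref{p1} already gives you that macro-uniform equivalence coincides with coarse equivalence, so the conclusion stands.
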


This corollary is a principal ingredient in the coarse classification  of countable abelian groups given in \cite{BHZ}.

Corollary~\ref{t4} shows that the coarse classification of proper isometrically homogeneous metric spaces of asymptotic dimension zero is trivial: all such spaces are coarsely equivalent. The same concerns the bi-uniform classification of {\em uncountable} proper isometrically homogeneous metric spaces of bi-uniform dimension zero: all such spaces are bi-uniformly equivalent.  Also the micro-uniform classification of countable proper isometrically homogeneous metric spaces is trivial: all such spaces are micro-uniformly equivalent to $\IZ$.
In contrast, the bi-uniform classification of {\em countable} proper isometrically homogeneous metric spaces of uniform dimension zero is non-trivial and yields continuum many non-equivalent spaces.

First observe that the Baire Theorem guarantees that each countable proper isometrically homogeneous metric space $X$ is {\em boundedly-finite} in the sense that all bounded subsets of $X$ are finite.

For each boundedly-finite metric space $X$ of asymptotic dimension zero we can consider the function $f_X:\Pi\to\w\cup\{\infty\}$ defined on the set $\Pi$ of prime numbers and assigning to each  $p\in\Pi$ the  number
$$f_X(p)=\sup\{n\in\w:\mbox{$p^n$ divides $|C_s(x)|$ for some $x\in X$ and $s>0$}\},$$
where $C_s(x)$ stands for the $s$-connected component of $x$. It turns out that the function $f_X$ completely determines the bi-uniform type of a countable proper isometrically homogeneous metric space $X$ of asymptotic dimension zero.

\begin{theorem}\label{t5} Two countable proper isometrically homogeneous metric spaces $X,Y$ of asymptotic dimension zero are bi-uniformly equivalent if and only if $f_X=f_Y$.
\end{theorem}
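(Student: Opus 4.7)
The plan is to encode each space by a supernatural number, derive necessity from a clean divisibility observation about a macro-uniform bijection, and prove sufficiency by a back-and-forth construction on a common LCM-based tower.

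\smallskip
\emph{Reduction and setup.} Since $X$ is countable, proper, and isometrically homogeneous, each closed ball is finite (compact and countable), and homogeneity gives a common isolation radius $r>0$ with $C_r(x)=\{x\}$; hence $X$ is uniformly discrete, as is $Y$. Consequently every map defined on $X$ or $Y$ is automatically micro-uniform, and a bi-uniform equivalence is just a bijection $h:X\to Y$ for which $\w_h$ and $\w_{h^{-1}}$ are finite at every finite argument. Let $1=n_0<n_1<n_2<\cdots$ enumerate the distinct values of $|C_s(x_0)|$ (independent of $x_0$ by homogeneity) and $s_k$ the least scale realizing $n_k$; since each $C_{s_{k+1}}$-component is the disjoint union of $n_{k+1}/n_k$ equal-sized $C_{s_k}$-components, $(n_k)$ is a divisibility chain whose ``supernatural limit'' $\mathfrak n_X:=\prod_p p^{f_X(p)}$ records $f_X$. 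Write $(m_j)$, $(t_j)$, $\mathfrak n_Y$ for the analogous objects on $Y$.

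\smallskip
\emph{Necessity.} Assume $h$ exists. For each $k$ let $\phi(k)$ be the least $\ell$ with $\w_h(s_k)\le t_\ell$; then $h$ sends every level-$k$ $X$-ball $B$ entirely into some level-$\phi(k)$ $Y$-ball. Because the level-$k$ $X$-balls partition $X$ and $h$ is a bijection, each level-$\phi(k)$ $Y$-ball $D$ decomposes as the disjoint union $D=\bigsqcup\{h(B):B\text{ level-}k,\;h(B)\subseteq D\}$, so counting points yields $m_{\phi(k)}=a_D\cdot n_k$ for a positive integer $a_D$, and in particular $n_k\mid m_{\phi(k)}$. Hence $v_p(n_k)\le v_p(m_{\phi(k)})\le f_Y(p)$, and taking $\sup_k$ gives $f_X(p)\le f_Y(p)$; applying the same argument to $h^{-1}$ yields the reverse inequality, so $\mathfrak n_X=\mathfrak n_Y$.

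\smallskip
\emph{Sufficiency.} Assume $\mathfrak n_X=\mathfrak n_Y=:\mathfrak n$. Because every finite divisor of $\mathfrak n$ eventually divides some $n_k$ and some $m_j$, we may inductively choose strictly increasing $k_1<k_2<\cdots$, $j_1<j_2<\cdots$ and integers $d_1\mid d_2\mid\cdots$ with $d_i:=\mathrm{lcm}(n_{k_i},m_{j_i})$ dividing both $n_{k_{i+1}}$ and $m_{j_{i+1}}$. Partition $X$ at each stage $i$ into \emph{$d_i$-blocks}, each a union of $d_i/n_{k_i}$ level-$k_i$ $X$-balls contained in a common level-$k_{i+1}$ $X$-ball; analogously partition $Y$ into $d_i$-blocks using $j_i$ and $j_{i+1}$. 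Both sides then carry the same combinatorial hierarchy -- each $d_{i+1}$-block contains exactly $d_{i+1}/d_i$ many $d_i$-blocks on either side, and each $d_1$-block has $d_1$ points -- so a standard back-and-forth enumeration of $X$ and $Y$ produces a nested bijection of the towers: pair $d_i$-blocks level by level, compatibly with the previously paired $d_{i+1}$-blocks, and fill in each pair of matched $d_1$-blocks by an arbitrary bijection. The resulting $h:X\to Y$ sends each level-$k_i$ $X$-ball into a $d_i$-block of $Y$, which lies inside a level-$j_{i+1}$ $Y$-ball, so $\w_h(s_{k_i})\le t_{j_{i+1}}$; symmetrically $\w_{h^{-1}}(t_{j_i})\le s_{k_{i+1}}$. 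Hence $h$ is a bi-uniform equivalence.

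\smallskip
\emph{Main obstacle.} The conceptual heart is the identity $n_k\mid m_{\phi(k)}$ of necessity, which relies only on the partition-respecting behaviour of a macro-uniform bijection. The technical difficulty is the sufficiency construction in cases where $(n_k)$ and $(m_j)$ share only the value $1$ -- for instance $(4^k)$ versus $(1,2,8,32,\ldots)$, both with $\mathfrak n=2^\infty$ -- because then no individual ball on one side has a counterpart of the same size on the other, and one is forced to pass through the LCM-based virtual tower. Coordinating the nested $d_i$-block pairing at every level while exhausting the two countable spaces is precisely the setting in which the technique of towers developed earlier in the paper is designed to operate.
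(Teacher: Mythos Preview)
Your argument is correct, and the necessity half matches the paper's: a bi-uniform bijection sends each $\e$-component of $X$ into some $\delta$-component of $Y$, and since the $\e$-components partition the preimage of that $\delta$-component, $|C_\e(x)|$ divides $|C_\delta(y)|$; this gives $f_X\le f_Y$, and symmetry gives equality.

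For sufficiency the paper takes a different route: it shows that \emph{every} such space $X$ is bi-uniformly equivalent to the group $\IZ_{f_X}=\bigoplus_{p}\IZ_p^{f_X(p)}$ with a proper left-invariant metric, by observing that the canonical tower $T_X^\w$ is homogeneous with degree sequence $\big(|C_{n+1}(x)|/|C_n(x)|\big)_{n\in\w}$ and then invoking Corollary~\ref{c8} to produce a tower isomorphism onto the coset tower of a suitably arranged copy of $\IZ_{f_X}$. Thus $X\sim\IZ_{f_X}=\IZ_{f_Y}\sim Y$. Your approach instead constructs the equivalence $X\sim Y$ directly: you interleave the two ball-size chains $(n_k)$ and $(m_j)$ into a common LCM chain $(d_i)$, manufacture on each side a homogeneous tower of $d_i$-blocks with the same branching numbers $d_{i+1}/d_i$, and use a back-and-forth (equivalently, Corollary~\ref{c8}) to identify the two towers. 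Your version is more explicit and avoids the group-theoretic canonical form; the paper's version is tidier in that it exhibits a single representative per supernatural number and reuses the already-developed tower machinery verbatim.

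One small slip: the bound $\w_h(s_{k_i})\le t_{j_{i+1}}$ is not literally true, since two points in the same $t_{j_{i+1}}$-connected component of $Y$ can lie at distance larger than $t_{j_{i+1}}$. The correct estimate is $\w_h(s_{k_i})\le\mesh\,\C_{t_{j_{i+1}}}(Y)$, which is finite because $Y$ has asymptotic dimension zero; this is all you need for macro-uniformity, so the conclusion stands.
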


For countable groups (endowed with proper left-invariant metrics) Theorem~\ref{t5} has been proved in \cite{BDHM}.

Observe that for any function $f:\Pi\to\w\cup\{\infty\}$ there is a countable proper isometrically homogeneous ultrametric space $X$ with $f=f_X$. To get such a space $X$, consider the abelian group
$$\IZ_f=\bigoplus_{p\in\Pi}\IZ_p^{f(p)}.$$  If $f(p)=\infty$ then $\IZ_p^{f(p)}=\IZ_p^\infty$ is the direct sum of countably many copies of the cyclic group $\IZ_p=\IZ/p\IZ$.

Endowing the group $\IZ_f$ with a suitable proper left-invariant metric $d$, we can see that the metric space $X=(\IZ_f,d)$ has $f_X=f$. Combining this observation with Corollary~\ref{t4}(3) and Theorem~\ref{t5}, we get the following bi-uniform classification of proper isometrically homogeneous metric spaces of bi-uniform dimension zero.

\begin{corollary}\label{t6} A proper isometrically homogeneous metric space $X$ of bi-uniform dimension zero is bi-uniformly equivalent to
\begin{itemize}
\item the Cantor bi-cube $2^{<\IZ}$ if $X$ is uncountable;
\item the group $\IZ_{f_X}$ if $X$ is countable.
\end{itemize}
\end{corollary}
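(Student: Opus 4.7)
The plan is to split into the two cases and reduce each to a previously established result.

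The uncountable case is immediate: if $X$ is proper, uncountable, isometrically homogeneous, and of bi-uniform dimension zero, then Corollary~\ref{t4}(3) yields at once that $X$ is bi-uniformly equivalent to $2^{<\IZ}$, and there is nothing further to prove.

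For the countable case, the strategy is to invoke Theorem~\ref{t5}. Since bi-uniform dimension zero implies asymptotic dimension zero, $X$ satisfies that theorem's hypothesis. The substantive task is to endow the group $\IZ_{f_X}=\bigoplus_{p\in\Pi}\IZ_p^{f_X(p)}$ with a proper left-invariant metric whose divisibility function equals $f_X$. I would do so by enumerating the primes as $p_1,p_2,\ldots$ and setting $F_n=\bigoplus_{k\le n}\IZ_{p_k}^{\min(n,f_X(p_k))}$; this gives an increasing exhausting chain of finite subgroups of $\IZ_{f_X}$. Fixing any strictly increasing positive sequence $r_1<r_2<\cdots$ tending to infinity and defining $d(x,y)=r_{\min\{n\,:\,x^{-1}y\in F_n\}}$ for $x\ne y$ produces a left-invariant ultrametric under which $\IZ_{f_X}$ becomes a proper metric space of bi-uniform dimension zero, whose $s$-connected components are exactly the cosets of the appropriate $F_n$.

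With this metric in place, for each prime $p$ with $f_X(p)<\infty$ the $p$-part of $|F_n|$ stabilizes at $p^{f_X(p)}$, while for each prime with $f_X(p)=\infty$ the $p$-adic valuation $v_p(|F_n|)$ tends to infinity; hence $f_{\IZ_{f_X}}=f_X$. Theorem~\ref{t5} then delivers the bi-uniform equivalence $X\sim\IZ_{f_X}$, completing the countable case. The main obstacle is precisely the construction of the filtration $(F_n)$ so that its orders realize the prescribed $p$-adic valuations in the limit; the diagonal enumeration above handles this cleanly, and once it is in place properness, left-invariance, and the component count are straightforward consequences of the ultrametric definition.
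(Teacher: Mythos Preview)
Your proposal is correct and follows precisely the route the paper indicates: the text preceding Corollary~\ref{t6} explicitly says the result is obtained by combining Corollary~\ref{t4}(3) (for the uncountable case) with Theorem~\ref{t5} together with the observation that $\IZ_f$ admits a proper left-invariant metric for which $f_{\IZ_f}=f$; you have simply supplied the explicit filtration and ultrametric that the paper leaves as ``a suitable proper left-invariant metric.'' Note also that the countable case is established directly, without invoking Theorem~\ref{t5}, as Lemma~\ref{l7} in the paper's proof of that theorem.
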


\section{Characterizing the coarse equivalence}\label{sa}

In this section we show that various natural ways of defining morphisms in Asymptology\footnote{The term ``Asymptology'' was coined by I.Protasov in \cite{PZ} for naming the theory studying large scale properties of metric spaces (or more general objects like {\em balleans} of I.~Protasov \cite{PZ}, \cite{PB} or {\em coarse structures} of J.~Roe \cite{Roe}).} lead to the same notion of coarse equivalence. Besides the original approach of J.~Roe \cite{Roe} based on the notion of a coarse map, we discuss an alternative approach based on the notion of a multi-map.

By a {\em multi-map} $\Phi:X\Ra Y$ between two sets $X,Y$ we understand any subset $\Phi\subset X\times Y$.

For a subset $A\subset X$ by $\Phi(A)=\{y\in Y:\exists a\in A\mbox{ with }(a,y)\in\Phi\}$ we denote the image of $A$ under the multi-map $\Phi$. Given a point $x\in X$ we write $\Phi(x)$ instead of $\Phi(\{x\})$.

The inverse $\Phi^{-1}:Y\Ra X$ to the multi-map $\Phi$ is the subset $\Phi^{-1}=\{(y,x)\in Y\times X: (x,y)\in\Phi\}\subset Y\times X$. For two multi-maps $\Phi:X\Ra Y$ and $\Psi:Y\Ra Z$ we define their composition $\Psi\circ\Phi:X\Ra Z$ as usual:
$$\Psi\circ\Phi=\{(x,z)\in X\times Z:\exists y\in Y\mbox{ such that $(x,y)\in \Phi$ and $(y,z)\in\Psi$}\}.$$

 A multi-map $\Phi$ is called {\em surjective} if $\Phi(X)=Y$ and {\em bijective} if $\Phi\subset X\times Y$ coincides with the graph of a bijective (single-valued) function.

The {\em oscillation} of a multi-map $\Phi:X\Ra Y$ between metric spaces is the function $\w_\Phi:[0,\infty)\to[0,\infty]$ assigning to each $\delta\ge 0$ the (finite or infinite) number $$\w_\Phi(\delta)=\sup\{\diam(\Phi(A)):A\subset X,\;\diam(A)\le\delta\}.$$ Observe that $\w_\Phi(\Phi)=0$ if and only if $\Phi$ is at most single-valued in the sense that $|\Phi(x)|\le 1$ for any $x\in X$.

A multi-map $\Phi:X\Ra Y$ between metric spaces $X$ and $Y$ is called
\begin{itemize}
\item {\em micro-uniform}  if $\forall\e>0\;\exists\delta>0$ with $\w_\Phi(\delta)\le\e$;
\item {\em macro-uniform} if
$\forall\delta<\infty\;\exists\e<\infty$ with $\w_\Phi(\delta)\le\e$;
\item {\em bi-uniform} if $\Phi$ is both micro-uniform and macro-uniform.
\end{itemize}

A multi-map $\Phi:X\Ra Y$  is called a {\em bi-uniform}  (resp. {\em micro-uniform}, {\em macro-uniform}) {\em embedding} if $\Phi^{-1}(Y)=X$ and both multi-maps $\Phi$ and $\Phi^{-1}$ are bi-uniform (resp.  micro-uniform, macro-uniform). If, in addition, $\Phi(X)=Y$, then $\Phi$ is called  a {\em bi-uniform} (resp. {\em micro-uniform}, {\em macro-uniform}) {\em equivalence}.

Two metric spaces $X,Y$ are called {\em bi-uniformly} (resp. {\em micro-uniformly}, {\em macro-uniformly})
{\em equivalent\/}  if there is a bi-uniform (resp. micro-uniform, macro-uniform) equivalence $\Phi:X\Ra Y$.

It follows that each micro-uniform multi-map is at most single-valued and thus is uniformly continuous in the usual sense. So, two metric spaces $X,Y$ are micro-uniformly equivalent if and only if they are uniformly homeomorphic. On the other hand, the notion of bi-uniform equivalence agrees with that given in the introduction.
In Proposition~\ref{p1} below we shall prove that metric spaces are macro-uniformly equivalent if and only if they are coarsely equivalent.

A subset $L$ of a metric space $X$ is called {\em large} if $B_r(L)=X$ for some $r\in\IR$, where $B_r(L)=\{x\in X:\dist(x,L)\le  r\}$ stands for the closed $r$-neighborhood of the set $L$ in $X$.

For two multi-maps $\Phi:\Psi:X\Ra Y$ between metric spaces let $$\dist(\Psi,\Phi)=\inf\{r\in[0,\infty]:\forall x\in X\;\;\Phi(x)\subset B_r(\Psi(x))\mbox{ and }\Psi(x)\subset B_r(\Phi(x))\}.$$

The following  characterization is the main (and unique) result of this section.

\begin{proposition}\label{p1} For metric spaces $X,Y$ the following assertions are equivalent:
\begin{enumerate}
\item $X$ and $Y$ are macro-uniformly equivalent;
\item $X$ and $Y$ are coarsely equivalent;
\item the spaces $X,Y$ contain bi-uniformly equivalent large subspaces $X'\subset X$ and $Y'\subset Y$;
\item there are two macro-uniform maps $f:X\to Y$, $g:Y\to X$ whose inverses $f^{-1}:Y\Ra X$ and $g^{-1}:X\Ra Y$ are macro-uniform and $\max\{\dist(g\circ f,\id_X),\dist(f\circ g,\id_Y)\}<\infty.$
\end{enumerate}
\end{proposition}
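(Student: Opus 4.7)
My plan is to prove the cycle $(1)\Rightarrow(4)\Rightarrow(2)\Rightarrow(1)$, unifying the three function-based formulations of coarse equivalence, together with the separate equivalence $(1)\Leftrightarrow(3)$ via a net construction.

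For $(1)\Rightarrow(4)$, given a macro-uniform equivalence $\Phi:X\Ra Y$, I would select functions $f:X\to Y$, $g:Y\to X$ with $f(x)\in\Phi(x)$, $g(y)\in\Phi^{-1}(y)$. Since the graphs of $f,f^{-1},g,g^{-1}$ sit inside those of $\Phi$ or $\Phi^{-1}$, the oscillation bounds $\w_f\le\w_\Phi$, $\w_{f^{-1}}\le\w_{\Phi^{-1}}$ (and similarly for $g$) make all four multi-maps macro-uniform. Both $x$ and $g(f(x))$ lie in $\Phi^{-1}(f(x))$, so $\dist(x,g(f(x)))\le\w_{\Phi^{-1}}(0)<\infty$, and symmetrically for $f\circ g$. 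The step $(4)\Rightarrow(2)$ is an immediate weakening of the hypothesis. For $(2)\Rightarrow(1)$, starting from coarse-equivalence functions $f,g$ with $K=\max\{\dist(gf,\id_X),\dist(fg,\id_Y)\}<\infty$, set $\Phi=f\cup g^{-1}\subset X\times Y$; then $\Phi(X)=f(X)\cup g^{-1}(X)=Y$ and $\Phi^{-1}(Y)=X$ because $f,g$ are total. For $A\subset X$ of diameter $\le\delta$, any $y\in g^{-1}(A)$ satisfies $\dist(y,f(g(y)))\le K$, forcing $g^{-1}(A)\subset B_K(f(A))$ and $\diam\Phi(A)\le 2K+\w_f(\delta)<\infty$; the symmetric bound handles $\Phi^{-1}$.

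The equivalence $(1)\Leftrightarrow(3)$ goes through nets. For $(3)\Rightarrow(1)$, take $\Psi:X'\Ra Y'$ to be a bi-uniform equivalence between the given $R$-dense large subspaces; the blow-up multi-map $\rho_X:X\Ra X'$, $\rho_X(x)=\{x'\in X':\dist(x,x')\le R\}$, satisfies $\w_{\rho_X}(\delta)\le\delta+2R$ and is a macro-uniform equivalence, and similarly for $\rho_Y:Y\Ra Y'$, whence $\rho_Y^{-1}\circ\Psi\circ\rho_X$ is a macro-uniform equivalence $X\Ra Y$. For the harder direction $(1)\Rightarrow(3)$, fix any $r_0>0$, pick $\delta>\w_{\Phi^{-1}}(r_0)$ (finite by macro-uniformity of $\Phi^{-1}$), and let $X'\subset X$ be a maximal $\delta$-separated subset. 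Selecting $\phi(x)\in\Phi(x)$ for each $x\in X'$ and setting $Y'=\phi(X')$, I would verify three things: (a) $\phi$ is injective, because $\phi(x)=\phi(x')$ would place $\{x,x'\}$ inside $\Phi^{-1}(\phi(x))$ of diameter $\le\w_{\Phi^{-1}}(0)<\delta$; (b) $Y'$ is $r_0$-separated, because $\dist(\phi(x),\phi(x'))\le r_0$ places $\{x,x'\}$ inside $\Phi^{-1}(\{\phi(x),\phi(x')\})$ of diameter $\le\w_{\Phi^{-1}}(r_0)<\delta$; (c) $Y'$ is large in $Y$, because for any $y\in Y$ and $x\in\Phi^{-1}(y)$, the nearest $x'\in X'$ with $\dist(x,x')\le\delta$ satisfies $\dist(y,\phi(x'))\le\w_\Phi(\delta)$. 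The resulting bijection $\phi:X'\to Y'$ inherits macro-uniformity in both directions from $\Phi$, and micro-uniformity is automatic because $X'$ is $\delta$-separated and $Y'$ is $r_0$-separated.

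The principal subtlety lies in $(1)\Rightarrow(3)$: a naive restriction of $\Phi$ to a net yields only a multi-map equivalence, and converting it into a bijection between discrete nets would normally require a Hall/K\"onig-type matching argument. The trick that sidesteps the matching is the simultaneous choice of scale $\delta>\w_{\Phi^{-1}}(r_0)$: in one stroke it makes the selection $\phi$ injective \emph{and} forces $Y'$ to inherit $r_0$-separation, so both nets become discrete at a positive scale with no further combinatorial work.
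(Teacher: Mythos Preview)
Your proof is correct. The organisation differs from the paper's: the paper runs a single cycle $(1)\Rightarrow(4)\Rightarrow(2)\Rightarrow(3)\Rightarrow(1)$, while you close the loop $(1)\Rightarrow(4)\Rightarrow(2)\Rightarrow(1)$ first and handle $(1)\Leftrightarrow(3)$ separately. Your direct step $(2)\Rightarrow(1)$ via the union $\Phi=f\cup g^{-1}$ is a genuine shortcut---the paper instead detours through $(3)$, building nets before reassembling a macro-uniform equivalence. For the net construction itself, the paper (in its $(2)\Rightarrow(3)$) chooses a maximal $S$-separated set in $f(X)\subset Y$ with $S=1+\w_f(1)$ and lifts it to $X$, whereas you pick the $\delta$-separated net in $X$ with $\delta>\w_{\Phi^{-1}}(r_0)$ and push it forward; these are dual versions of the same idea, and your calibration of $\delta$ against $\w_{\Phi^{-1}}$ plays the same role as the paper's calibration of $S$ against $\w_f$. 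Both routes yield a bijection between uniformly discrete large subsets whose bi-uniformity is automatic from the separation at positive scale. What your organisation buys is a cleaner and self-contained $(2)\Rightarrow(1)$; what the paper's single cycle buys is that $(3)$ is obtained from the weakest hypothesis $(2)$ rather than the strongest $(1)$, so no extra implication has to be traversed.
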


\begin{proof} To prove the equivalence of the items (1)--(4), it suffices to establish the implications $(1)\Ra(4)\Ra(2)\Ra(3)\Ra(1)$.
\smallskip

$(1)\Ra(4)$ Assuming that $X$ and $Y$ are macro-uniformly equivalent, fix a surjective macro-uniform multi-map $\Phi:X\Ra Y$ with surjective macro-uniform inverse $\Phi^{-1}:Y\Ra X$. Since the multi-map $\Phi^{-1}$ is surjective, for every $x\in X$ the subset $\Phi(x)\subset Y$ is not empty and thus contains some  point $f(x)\in\Phi(x)$.
It follows from the macro-uniformity of $\Phi$ that the map $f:X\to Y$ is macro-uniform. Since $f^{-1}(y)\subset \Phi^{-1}(y)$ for all $y\in Y$, the macro-uniformity $\Phi^{-1}$ implies the macro-uniformity of the multi-map $f^{-1}:Y\Ra X$.

By the same reason, the surjectivity of the multi-map $\Phi$ implies the existence of a map $g:Y\to X$ such that $g(y)\in\Phi^{-1}(y)$ for all $y\in Y$. The macro-uniformity $\Phi$ and $\Phi^{-1}$ implies that $g:Y\to X$ and $g^{-1}:X\Ra Y$ are macro-uniform.

Since the composition $\Phi^{-1}\circ\Phi:X\Ra X$ is macro-uniform, there is a constant $C<\infty$ such that $\diam \Phi^{-1}\circ \Phi(x)\le C$ for all $x\in X$. Observing that $\{x,g\circ f(x)\}\subset\Phi^{-1}\circ \Phi(x)$ we see that $\dist(g\circ f,\id_X)\le C<\infty$. By the same reason, $\dist(f\circ g,\id_Y)<\infty$.
\smallskip

The implication $(4)\Ra(2)$ trivially follows from the definition of the coarse equivalence given in the Introduction.
\smallskip

$(2)\Ra(3)$ Assume that there are two macro-uniform maps $f:X\to Y$, $g:Y\to X$ with $\dist(g\circ f,\id_X)<R$ and $\dist(f\circ g,\id_Y)< R$ for some real number $R$.
It follows that $B_R(f(X))=Y$ and hence the set $f(X)$ is large in $Y$.
Since $f$ is macro-uniform, the number $S=1+\w_f(1)$ is finite. Let $Y'\subset f(X)$ be a maximal $S$-separated subset of $f(X)$. The $S$-separated property of $Y'$ means that $\dist(y,y')\ge S$ for any distinct points $y,y'\in Y'$. The maximality of $Y'$ guarantees that $Y'$ is large in $f(X)$ and consequently, in $Y$.

Choose any subset $X'\subset X$ making the restriction $h=f|X':X'\to Y'$ bijective. Being a restriction of a macro-uniform map, the map $h$ is macro-uniform. The choice of the number $S$ guarantees that the set $X'$ is 1-separated and consequently, the map $h$ is micro-uniform. Since $Y'$ is $S$-separated the inverse map $h^{-1}:Y'\to X'$ is micro-uniform.

It remains to check that $h^{-1}$ is macro-uniform. Given arbitrary $\e<\infty$, use  the macro-uniformity of the map $g:Y\to X$ to conclude that the number $\delta=\w_g(\e)$ is finite.
Now take any points $y,y'\in Y'$ with $\dist(y,y')\le \e$ and let $x=h^{-1}(y)$ and $x'=h^{-1}(y')$. We claim that $\dist(x,x')\le\delta+2R$. By the choice of $\delta$,   $\dist(g\circ f(x),g\circ f(x'))=\dist(g(y),g(y'))\le\delta=\w_g(\e)$. Since $\dist(g\circ f,\id_X)\le R$, we conclude that
$$
\begin{aligned}
\dist(x,x')&\le\dist(x,g\circ f(x))+\dist(g\circ f(x),g\circ f(x'))+\dist(g\circ f(x'),x')\le\\
&\le R+\dist(g(y),g(y'))+R\le\delta+2R.
\end{aligned}
$$

Finally, let us show that the set $X'$ is large in $X$. Given any point $x\in X$, find a point $x'\in X'$ with $\dist(f(x),f(x'))\le S$. Then $\dist(x,x')\le \dist(x,g\circ f(x))+\dist(g\circ f(x),g\circ f(x'))+\dist(g\circ f(x'),x')\le R+\w_g(S)+R$ and consequently, $B_{R'}(X')=X$ for $R'=2R+\w_g(S)$.
\smallskip

$(3)\Ra(1)$ Assume that the spaces $X,Y$ contain bi-uniformly equivalent large subspaces $X'\subset X$ and $Y'\subset Y$ and let $f:X'\to Y'$ be a bi-uniform equivalence.
Find $R\in\IR$ such that $B_R(X')=X$ and $B_R(Y')=Y$. Take any surjective maps $\varphi:X\to X'$ and $\psi:Y\to Y'$ with $\dist(\varphi,\id_X)\le R$ and $\dist(\psi,\id_Y)\le R$. It is easy to see that $\varphi$ and $\psi$ are macro-uniform equivalences and then the composition $\psi^{-1}\circ f\circ \varphi:X\Ra Y$ is a required macro-uniform equivalence between $X$ and $Y$.
\end{proof}

\section{$\e$-Connected components and uniform multi-maps}

We recall that for $\e>0$ and a point $x$ of a metric space $X$ by $C_\e(x)$ we denote the $\e$-connected component of $x$. This is the set of all points $x'\in X$ that can be linked with $x$ by a chain of points $x=x_0,x_1,\dots,x_n=x'$ with $\dist(x_{i-1},x_i)\le\e$ for all $i\le n$. By $\C_\e(X)=\{C_\e(x):x\in X\}$ we denote the family of all $\e$-connected components of $X$.

\begin{lemma}\label{l1} Let $\Phi:X\Ra Y$ be a multi-map such that $\Phi^{-1}(Y)=X$.  For any real numbers $\delta\ge 0$ and $\e\ge \w_\Phi(\delta)$, and  every point $x\in X$ the image $\Phi(C_\delta(x))$ lies in the $\e$-connected component $C_\e(y)$ of any point $y\in\Phi(x)$.
\end{lemma}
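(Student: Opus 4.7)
The plan is to unwind the definition of the $\delta$-connected component $C_\delta(x)$ as a chain of points, and then transport that chain through $\Phi$ to produce an $\e$-chain in $Y$ starting at $y$. The hypothesis $\Phi^{-1}(Y)=X$ is what permits the transport: it tells us that each point in the domain chain has a nonempty image, so intermediate witnesses $y_i\in\Phi(x_i)$ exist to form the target chain.

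First I would fix an arbitrary $y'\in\Phi(C_\delta(x))$ and pick $x'\in C_\delta(x)$ with $y'\in\Phi(x')$. By definition of the $\delta$-connected component, there is a chain $x=x_0,x_1,\ldots,x_n=x'$ with $\dist(x_{i-1},x_i)\le\delta$ for $i\le n$. Using $\Phi^{-1}(Y)=X$, for each intermediate index $i$ with $0<i<n$ I would choose some $y_i\in\Phi(x_i)$, and set $y_0:=y\in\Phi(x)=\Phi(x_0)$ and $y_n:=y'\in\Phi(x_n)$.

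The crux is then a single calculation on consecutive pairs: the set $\{x_{i-1},x_i\}$ has diameter $\le\delta$, so by the definition of the oscillation
$$\dist(y_{i-1},y_i)\le\diam\,\Phi(\{x_{i-1},x_i\})\le\w_\Phi(\delta)\le\e.$$
Hence $y=y_0,y_1,\dots,y_n=y'$ is an $\e$-chain in $Y$, which places $y'$ in $C_\e(y)$. Since $y'\in\Phi(C_\delta(x))$ was arbitrary, the inclusion $\Phi(C_\delta(x))\subset C_\e(y)$ follows.

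There is no real obstacle here; the only point that requires a moment's care is handling the endpoints of the chain, where the witnesses $y_0$ and $y_n$ are prescribed rather than chosen freely, and verifying that the hypothesis $\Phi^{-1}(Y)=X$ (rather than merely $\Phi(X)=Y$) is precisely what provides the intermediate $y_i$.
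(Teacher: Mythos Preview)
Your proof is correct and follows essentially the same approach as the paper's: fix $y'\in\Phi(C_\delta(x))$, lift a $\delta$-chain from $x$ to $x'$, choose witnesses $y_i\in\Phi(x_i)$ with prescribed endpoints, and bound each step by $\diam\Phi(\{x_{i-1},x_i\})\le\w_\Phi(\delta)\le\e$. The only cosmetic difference is that the paper writes the intermediate bound $\w_\Phi(\dist(x_{i-1},x_i))$ explicitly before passing to $\w_\Phi(\delta)$.
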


\begin{proof} Given any $x'\in C_\delta(x)$ and $y'\in\Phi(x)$, we need to check that $y'\in C_\e(y)$. Find a chain of points $x=x_0,x_1,\dots,x_n=x'$ such that $\dist(x_{i-1},x_i)\le\delta$ for all $i\le n$.
Since $X=\Phi^{-1}(Y)$, for every $i\le n$ we can choose a point $y_i\in\Phi(x_i)$ so that $y_0=y$ and $y_n=y'$. It follows from the definition of $\w_\Phi(\delta)$ that
for every $i\le n$ we get
$$\dist(y_{i-1},y_i)\le\diam \Phi(\{x_{i-1},x_i\})\le\w_\Phi(\dist(x_{i-1},x_i))\le\w_\Phi(\delta)\le\e,$$
which means that $y=y_0,y_1,\dots,y_n=y'
$ is an $\e$-chain linking the points $y$ and $y'$. Consequently, $y'\in C_\e(y)$.
\end{proof}

Lemma~\ref{l1} will be applied in order to show that some information on the asymptotic properties of the cardinal numbers $\theta^\e_\delta(X)$ and $\Theta_\delta^\e(X)$ is preserved by bi-uniform equivalences.

\begin{lemma}\label{l2} Let $\Phi:X\Ra Y$ is a multi-map such that $Y=\Phi(X)$ and $\Phi^{-1}(Y)=X$. For any positive real numbers $\delta<\e$ and $\delta'<\e'$ with $\e'\ge\w_\Phi(\e)$, $\delta\ge\w_{\Phi^{-1}}(\delta')$ we get $\theta_{\delta}^\e(X)\le\theta_{\delta'}^{\e'}(Y)$ and $\Theta_{\delta}^\e(X)\le\Theta_{\delta'}^{\e'}(Y)$.
\end{lemma}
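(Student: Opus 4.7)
The plan is to reduce both inequalities to a single pointwise comparison: for any $x\in X$ and any $y\in\Phi(x)$ (such a $y$ exists since $\Phi^{-1}(Y)=X$), show that
$$|C_\e(x)/\C_\delta(X)|\le|C_{\e'}(y)/\C_{\delta'}(Y)|.$$
Once this is proved, the inequality for $\Ent$ follows by taking the supremum over $x\in X$ on the left and bounding the right side by $\Ent_{\delta'}^{\e'}(Y)$. For the inequality on $\ent$, one picks $y\in Y$ realizing (or nearly realizing) $\ent_{\delta'}^{\e'}(Y)$, uses surjectivity $Y=\Phi(X)$ to choose $x\in\Phi^{-1}(y)$, and reads off the bound.

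For the pointwise inequality, the strategy is to construct an injective map $\alpha$ from the collection of $\delta$-components of $X$ contained in $C_\e(x)$ into the collection of $\delta'$-components of $Y$ contained in $C_{\e'}(y)$. For each such $\delta$-component $D$, choose a representative $z_D\in D$ together with some $w_D\in\Phi(z_D)$, and set $\alpha(D)=C_{\delta'}(w_D)$. The hypothesis $\e'\ge\w_\Phi(\e)$ plus Lemma~\ref{l1} applied to $\Phi$ (with parameters $\e,\e'$ in the roles of $\delta,\e$) give $\Phi(C_\e(x))\subset C_{\e'}(y)$; hence $w_D\in C_{\e'}(y)$, and since $\delta'\le\e'$ the whole $\delta'$-component $C_{\delta'}(w_D)$ lies inside $C_{\e'}(y)$. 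So $\alpha$ takes values in the intended set.

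The main point — and where the second hypothesis $\delta\ge\w_{\Phi^{-1}}(\delta')$ is consumed — is injectivity. Suppose $\alpha(D_1)=\alpha(D_2)$, i.e., $w_{D_1}$ and $w_{D_2}$ lie in a common $\delta'$-connected component of $Y$. Apply Lemma~\ref{l1} to the multi-map $\Phi^{-1}$, playing the parameters $\delta'$ and $\delta$ in the roles of ``$\delta$'' and ``$\e$'' (which is legitimate because $\Phi(X)=Y$, so $\Phi^{-1}$ satisfies the hypothesis of Lemma~\ref{l1}); this yields $\Phi^{-1}(C_{\delta'}(w_{D_2}))\subset C_\delta(z_{D_2})$. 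Since $z_{D_1}\in\Phi^{-1}(w_{D_1})\subset\Phi^{-1}(C_{\delta'}(w_{D_2}))$, we conclude $z_{D_1}\in C_\delta(z_{D_2})$, so $D_1=D_2$.

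The only real obstacle is not substantive but notational: one must keep the two invocations of Lemma~\ref{l1} straight, matching $(\delta,\e)=(\e,\e')$ for $\Phi$ against $(\delta,\e)=(\delta',\delta)$ for $\Phi^{-1}$, so that the two oscillation hypotheses of the lemma under proof are used exactly once each. Everything else is set-theoretic bookkeeping.
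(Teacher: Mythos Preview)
Your proof is correct and follows essentially the same approach as the paper. The only cosmetic difference is that the paper defines a single global injection $\varphi:\C_\delta(X)\to\C_{\delta'}(Y)$, $C\mapsto C_{\delta'}(y_C)$, and then observes that it restricts appropriately to the components inside each $C_\e(x)$, whereas you build the injection $\alpha$ locally for a fixed $x$; the two invocations of Lemma~\ref{l1} and the derivation of the $\theta$- and $\Theta$-inequalities are identical.
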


\begin{proof} For any $\delta$-connected component $C\in\C_\delta(X)$ choose a point $y_C\in\Phi(C)$. Since $\w_{\Phi^{-1}}(\delta')\le\delta$, we can apply Lemma~\ref{l1} to prove that for any distinct components $C,C'\in C_\delta(X)$ the points $y_C$ and $y'_C$ lie in distinct $\delta'$-components of $Y$. Therefore the map $$\varphi:\C_\delta(X)\to\C_{\delta'}(Y),\;\;\varphi:C\mapsto C_{\delta'}(y_C)$$ is injective.

By Lemma~\ref{l1}, for any point $x\in X$ the set $\Phi(C_\e(x))$ lies in $C_{\e'}(y)$ for any $y\in\Phi(x)$. Now the injectivity of the map $\varphi$ implies that
$$|C_\e(x)/\C_\delta(X)|\le |C_{\e'}(y)/\C_{\delta'}(Y)|\le \Theta_{\delta'}^{\e'}(Y)$$ and hence $\Theta_{\delta}^\e(X)\le\Theta_{\delta'}^{\e'}(Y)$.

Next, find a point $y\in Y$ with $\theta_{\delta'}^{\e'}(Y)=|C_{\e'}(y)/\C_{\delta'}(Y)|$ and choose any point $x\in\Phi^{-1}(y)$. Then
$$\theta_\delta^\e(X)\le|C_\e(x)/\C_\delta(X)|\le |C_{\e'}(y)/\C_{\delta'}(Y)|=\theta_{\delta'}^{\e'}(Y).$$
\end{proof}

\section{Towers}

The Characterization Theorems announced in the introduction will be proved by induction on partially ordered sets called towers. A typical example of a tower is the set $\{B_{2^n}(x):x\in X,\; n\in\IZ\}$ of closed $2^n$-balls of an ultrametric space $X$, ordered by the inclusion relation. To give a precise definition of a tower we need to recall some standard notions related to partially ordered sets.

\subsection{Partially ordered sets} A {\em partially ordered set} is a set $T$ endowed with a reflexive antisymmetric transitive relation $\le$.

A partially ordered set $T$ is called {\em $\upa$-directed} (resp. {\em $\da$-directed\/}) if for any two points $x,y\in T$ there is a point $z\in T$ such that $z\ge x$ and $z\ge y$ (resp. $z\le x$ and $z\le y$).

A subset $C$ of a partially ordered set $T$ is called {\em $\da$-cofinal} (resp. {\em $\upa$-cofinal}) if for every $x\in T$ there is $y\in C$ such that $y\le x$ (resp. $y\ge x$); $C$ is called {\em $\updownarrow$-cofinal} in $T$ if it is $\da$-cofinal and $\upa$-cofinal in $T$.

By the {\em lower cone} (resp. {\em upper cone}) of a point $x\in T$  we understand  the set ${\downarrow}x=\{y\in T:y\le x\}$ (resp. ${\uparrow}x=\{y\in T:y\ge x\}$). A subset $A\subset T$ will be called a {\em lower} (resp. {\em upper}) {\em set} if ${\downarrow}a\subset A$ (resp. ${\uparrow}a\subset A$) for all $a\in A$.
For two points $x\le y$ of $T$ the intersection $[x,y]={\upa}x\cap {\da}y$ is called the {\em order interval} with end-points $x,y$.

A partially ordered set $T$ is a {\em tree} if $T$ is $\da$-directed and for each point $x\in T$ the lower cone ${\da}x$ is well-ordered (in the sense that each subset $A\subset{\da}x$ has the smallest element.

\subsection{Introducing towers}

A partially ordered set $T$ is called a {\em tower} if
$T$ is $\upa$-directed and for every points $x\le y$ in $T$ the order interval $[x,y]\subset T$ is finite and linearly ordered.

This definition implies that for every point $x$ in a tower $T$ the upper set ${\upa}x$ is linearly ordered and is order isomorphic to a subset of $\w$. Since $T$ is $\upa$-directed, for any points $x,y\in T$ the upper sets ${\upa}x$ and ${\upa}y$ have non-empty intersection and this intersection has the smallest element $x\wedge y=\min({\upa}x\cap{\upa}y)$ (because each order interval in $X$ is finite). Thus any two points $x,y$ in a tower have the smallest upper bound $x\wedge y$.

It follows that for each point $x\in T$ of a tower the lower cone ${\da}x$ endowed with the reverse partial order is a tree of at most countable height.

\subsection{Levels of a tower} The definition of a tower $T$ includes the condition that  for any points $x\le y$ of  $T$ the order interval $[x,y]={\upa}x\cap {\da}y$ is linearly ordered and finite. This allows us to define levels of the tower $T$ as follows.

Given two points $x,y\in T$ we write $\lev_T(x)\le\lev_T(y)$ if $$|[x,x\wedge y]|\ge|[y,x\wedge y]|.$$ Also we write $\lev_T(x)=\lev_T(y)$ if  $|[x,x\wedge y]|=|[y,x\wedge y]|$.

The relation $$\{(x,y)\in T\times T:\lev_T(x)=\lev_T(y)\}$$ is an equivalence relation  on $T$ dividing the tower $T$ into equivalence classes called the {\em levels} of $T$. The level containing a point $x\in T$ is denoted by $\lev_T(x)$.
Let $$\Lev(T)=\{\lev_T(x):x\in T\}$$ denote the set of levels of $T$ and
$$\lev_T:T\to\Lev(T),\;\lev_T:x\mapsto\lev_T(x),$$
stand for the quotient map called the {\em level map}. If the tower $T$ is clear from the context, we shall omit the subscript $T$ and write $\lev$ instead of $\lev_T$.

The set $\Lev(T)$ of levels of $T$ endowed with the order $\lev_T(x)\le \lev_T(y)$ is a linearly ordered set, order isomorphic to a subset of integers.
For a level $\lambda\in\Lev(T)$ by $\lambda+1$ (resp. $\lambda-1$) we denote the successor (resp. the predecessor) of $\lambda$ in the level set $\Lev(T)$. If $\lambda$ is a maximal (resp. minimal) level of $T$, then we put $\lambda+1=\emptyset$ (resp. $\lambda-1=\emptyset$).

An embedding of the level set $\Lev(T)$ into $\IZ$ can be constructed as follows. Pick any point $\theta\in T$ and consider the map $e_\theta:\Lev(T)\to\IZ$ assigning to each level $\lev_T(x)\in\Lev(T)$ the integer number $$|[x,x\wedge \theta]|-|[\theta,x\wedge \theta]|.$$ In such a way we label the levels of $T$ by integer numbers so that the point $\theta$ sits on the zeros level.

The following model of the famous Eiffel tower is just an example of a tower having seven levels.

\begin{picture}(100,145)(-150,-10)

\put(0,120){\circle*{3}}
\put(0,120){\line(0,-1){20}}
\put(0,100){\circle*{3}}
\put(0,100){\line(0,-1){20}}
\put(0,80){\circle*{3}}
\put(0,80){\line(0,-1){20}}
\put(0,60){\circle*{3}}
\put(0,60){\line(0,-1){20}}
\put(0,40){\circle*{3}}
\put(0,40){\line(-3,-4){15}}
\put(0,40){\line(3,-4){15}}
\put(-15,20){\circle*{3}}
\put(-15,20){\line(-1,-1){20}}
\put(-35,0){\circle*{3}}
\put(15,20){\line(1,-1){20}}
\put(15,20){\circle*{3}}
\put(-15,20){\line(1,-4){5}}
\put(15,20){\line(-1,-4){5}}
\put(-10,0){\circle*{3}}
\put(10,0){\circle*{3}}
\put(35,0){\circle*{3}}
\put(40,-3){$\theta$}

\put(50,60){\vector(1,0){60}}
\put(70,66){$\lev_T$}

\put(150,120){\circle*{3}}
\put(160,117){6}
\put(150,120){\line(0,-1){20}}
\put(150,100){\circle*{3}}
\put(160,97){5}
\put(150,100){\line(0,-1){20}}
\put(150,80){\circle*{3}}
\put(160,77){4}
\put(150,80){\line(0,-1){20}}
\put(150,60){\circle*{3}}
\put(160,57){3}
\put(150,60){\line(0,-1){20}}
\put(150,40){\circle*{3}}
\put(160,37){2}
\put(150,40){\line(0,-1){20}}
\put(150,20){\circle*{3}}
\put(160,17){1}
\put(150,20){\line(0,-1){20}}
\put(150,0){\circle*{3}}
\put(160,-3){0}
\end{picture}

A tower $T$ will be called {\em $\da$-bounded} (resp. {\em $\upa$-bounded}) if the level set $\Lev(T)$ has the smallest (resp. largest) element. Otherwise $T$ is called {\em $\da$-unbounded} (resp. {\em $\upa$-unbounded}). A tower $T$ is called {\em $\updownarrow$-unbounded} if it is $\da$-unbounded and $\upa$-unbounded. Let us observe that $\upa$-bounded towers endowed with the reverse partial order are trees of at most countable height.

\subsection{A tower induced by a decomposition of a group} Let $G$ be a group written as the countable union $G=\bigcup_{n\in\w}G_n$ of a strictly increasing sequence
$$\{e\}=G_0\subset G_1\subset\cdots
$$
of subgroups of $G$.

Consider the family of cosets $T=\{xG_n:x\in G,\;n\in\w\}$ partially ordered by the inclusion relation. It is easy to check that the partially order set $T$ is a tower. This tower is $\da$-bounded and $\upa$-unbounded. For every $n\in\w$ the family of cosets $\{xG_n:x\in G\}$ forms a level of $T$. The minimal level of $G$ consists of the singletons and hence can be identified with the whole group $G$.

\subsection{The boundary of a tower}
By a {\em branch} of a tower $T$ we understand a maximal linearly ordered subset of $T$. The family of all branches of $T$ is denoted by $\partial T$ and is called the {\em boundary} of $T$. The boundary $\partial T$ carries an ultrametric that can be defined as follows.

Let $f:\Lev(T)\to[0,\infty)$ be a strictly increasing function such that
\begin{itemize}
\item $\inf f(\Lev(T))=0$ if $T$ is $\da$-unbounded and
\item $\sup f(\Lev(T))=\infty$ if $T$ is $\upa$-unbounded.
\end{itemize}
Such a map $f$ will be called a {\em scaling function} on $\Lev(T)$.

Given two branches $x,y\in\partial T$ let
$$\rho_f(x,y)=\begin{cases}0,&\mbox{if $x=y$,}\\
f\big(\lev_T(\min x\cap y)\big),&\mbox{if $x\ne y$.}
\end{cases}
$$
It is a standard exercise to check that $\rho_f$ is a well-defined ultrametric on the boundary $\partial T$ of $T$ turning $\partial T$ into a complete ultrametric space.
The following easy proposition says that the bi-uniform structure on $\partial T$ induced by the ultrametric $\rho_f$ does not depend on the choice of a scaling function $f$.

\begin{proposition}\label{p2} For any two scaling functions $f,g:\Lev(T)\to(0,\infty)$ the identity map $\id:(\partial T,\rho_f)\to(\partial T,\rho_g)$ is a bi-uniform equivalence.
\end{proposition}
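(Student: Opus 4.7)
The plan is to observe that both ultrametrics $\rho_f$ and $\rho_g$ are determined by the same underlying combinatorial data---namely, the level of the meeting point of two distinct branches---and differ only in how that level is rescaled. Concretely, for distinct $x,y\in\partial T$ set $\lambda(x,y)=\lev_T(\min x\cap y)$, so that $\rho_f(x,y)=f(\lambda(x,y))$ and $\rho_g(x,y)=g(\lambda(x,y))$. Since both $f$ and $g$ are strictly increasing (hence injective) on $\Lev(T)$, the formula $\phi(f(\lambda))=g(\lambda)$ defines a strictly increasing bijection $\phi:f(\Lev(T))\to g(\Lev(T))$, and $\rho_g(x,y)=\phi(\rho_f(x,y))$ for all $x\ne y$. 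Thus bi-uniformity of $\id:(\partial T,\rho_f)\to(\partial T,\rho_g)$ reduces to showing that $\phi$ has oscillation zero at $0$ and sends bounded sets to bounded sets; the symmetric statement for $\phi^{-1}$ handles $\id^{-1}$.

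For micro-uniformity I would fix $\e>0$ and distinguish two cases. If $T$ is $\da$-bounded, then $\Lev(T)$ has a smallest element $\lambda_{\min}$, so $\rho_f$-distances between distinct branches are all at least $f(\lambda_{\min})>0$, and any positive $\delta<f(\lambda_{\min})$ makes the implication $\rho_f(x,y)\le\delta\Rightarrow\rho_g(x,y)\le\e$ vacuous for $x\ne y$. If $T$ is $\da$-unbounded, the scaling conditions give $\inf f(\Lev(T))=\inf g(\Lev(T))=0$, so I can pick $\lambda_0\in\Lev(T)$ with $g(\lambda_0)\le\e$ and set $\delta=f(\lambda_0)$; whenever $\rho_f(x,y)\le\delta$, strict monotonicity of $f$ forces $\lambda(x,y)\le\lambda_0$, and hence $\rho_g(x,y)=g(\lambda(x,y))\le g(\lambda_0)\le\e$.

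Macro-uniformity is dual. If $T$ is $\upa$-bounded, then $\Lev(T)$ has a largest element $\lambda_{\max}$ and $\rho_g$ is globally bounded by $g(\lambda_{\max})$, so any $\e\ge g(\lambda_{\max})$ works. If $T$ is $\upa$-unbounded, then $\sup f(\Lev(T))=\sup g(\Lev(T))=\infty$; given $\delta<\infty$, pick $\lambda_1\in\Lev(T)$ with $f(\lambda_1)>\delta$, so that $\rho_f(x,y)\le\delta$ forces $\lambda(x,y)<\lambda_1$ and hence $\rho_g(x,y)<g(\lambda_1)$. Applying the same arguments with the roles of $f$ and $g$ swapped gives the required oscillation bounds for $\phi^{-1}$, completing the proof.

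The argument is essentially mechanical once one spots the reformulation via $\phi$, and I do not expect any real obstacle. The one point to handle with care is matching the two scaling conditions ($\inf=0$ when $T$ is $\da$-unbounded and $\sup=\infty$ when $T$ is $\upa$-unbounded) to the two corresponding case splits, which is precisely what ensures that $f$ and $g$ remain simultaneously compatible at both ends of $\Lev(T)$.
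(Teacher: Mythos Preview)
Your argument is correct. The paper itself does not supply a proof of this proposition---it is stated as ``easy'' and left to the reader---so there is nothing to compare against; your case analysis on the $\da$- and $\upa$-boundedness of $T$, matched to the two defining conditions on scaling functions, is exactly the routine verification the authors had in mind.
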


In the sequel we shall assume that the boundary $\partial T$ of any tower $T$ is endowed with the ultrametric $\rho_f$ induced by some scaling function $f:\Lev(T)\to(0,\infty)$.

\subsection{Degrees of points of a tower}

For a point $x\in T$ and a level $\lambda\in\Lev(T)$ let $\suc_\lambda(x)=\lambda\cap{\downarrow}x$ be the set of predecessors of $x$ on the $\lambda$-th level and $\deg_\lambda(x)=|\suc_\lambda(x)|$. For $\lambda=\lev_T(x)-1$,  the set $\suc_{\lambda}(x)$, called the set of parents of $x$, is denoted by $\suc(x)$. The cardinality $|\suc(x)|$ is called the {\em degree} of $x$ and is denoted by $\deg(x)$. Thus $\deg(x)=\deg_{\lev_T(x)-1}(x)$. It follows that $\deg(x)=0$ if and only if $x$ is a minimal element of $T$.

For levels $\lambda,l\in\Lev(T)$ let
$$\deg_\lambda^l(T)=\min\{\deg_\lambda(x):\lev_T(x)=l\}\mbox{ and }\Deg_\lambda^l(T)=\sup\{\deg_\lambda(x):\lev_T(x)=l\}.$$

Now let us introduce several notions related to degrees. We define a tower $T$ to be
\begin{itemize}
\item {\em homogeneous} if $\deg_\lambda^\ell(T)=\Deg_\lambda^\ell(T)$ for any level $\lambda\le\ell$ of $T$;
\item {\em pruned} if $\deg^{\lambda+1}_\lambda(T)>0$ for every non-maximal level $\lambda$ of $T$;
\item {\em ${\upa}$-branching} if $\forall\lambda\in\Lev(T)\;\;\exists l\in\Lev(T)$ with $\Deg_\lambda^l(T)>1$;
\item {\em ${\da}$-branching} if $\forall \lambda\in\Lev(T)\;\;\exists l\in\Lev(T)$ with $\deg^\lambda_l(T)>1$;
\item {\em ${\dupa}$-branching} if $T$ is both $\da$-branching and $\upa$-branching.
\end{itemize}

It is easy to check that a tower $T$ is pruned if and only if each branch of $T$ meets each level of $T$. A tower $T$ is $\upa$-branching if no level $\lambda\in\Lev(T)$ has an upper bound in $T$.

By a {\em binary tower} we understand an ${\upa}$-unbounded homogeneous tower $T$ such that  $\deg_\lambda^{\lambda+1}(T)=2$ for each non-maximal level $\lambda$ of $T$. It is clear that each binary tower is pruned and $\upa$-branching.

\begin{remark} The ex-Cantor set $2^{<\IZ}$ (resp. anti-Cantor set $2^{<\IN}$) can be identified with the boundary $\partial T_2$ of a ${\da}$-unbounded (resp. $\da$-bounded) binary tower $T_2$.
\end{remark}

There is a direct dependence between the degrees of points of the tower $T$ and the capacities of the balls in the ultrametric space $\partial T$. We recall that for positive real numbers $\delta\le \e$ and a point $x\in X$ by $|C_\e(x)/\C_\delta(X)|$ we denote the cardinality of the set $\{C_\delta(y):y\in C_\e(x)\}$ of $\delta$-connected components of $X$ that lie in the $\e$-connected component of $y$ in $X$. If $X$ is an ultrametric space then $C_\e(x)/\C_\delta(X)$ is equal to the number of $\delta$-balls composing the $\e$-ball $B_\e(x)$.

\begin{proposition}\label{p3} Let be a tower and $f:\Lev(T)\to(0,\infty)$ be a scaling function determining the ultrametric $\rho_f$ on the boundary $\partial T$ of $T$. For any branch $\beta\in\partial T$, a point $x\in \beta$ with $n=\lev_T(x)$, and a level $k\le n$ of $T$ we get $\deg_{k}(x)=|C_{f(n)}(\beta)/\C_{f(k)}(\partial T)|$. Consequently,
$$\deg_k^n(T)=\theta_{f(k)}^{f(n)}(\partial T)\mbox{ and }\Deg_k^n(T)=\Theta_{f(k)}^{f(n)}(\partial T).$$
\end{proposition}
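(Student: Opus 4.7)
The plan is to prove the identity $\deg_{k}(x) = |C_{f(n)}(\beta)/\C_{f(k)}(\partial T)|$ via a natural bijection between the set $\suc_k(x)$ of predecessors of $x$ at level $k$ and the family of $f(k)$-balls partitioning the $f(n)$-ball about $\beta$. The preliminary identification I will establish is
\[
C_{f(n)}(\beta) \;=\; B_{f(n)}(\beta) \;=\; \{\gamma \in \partial T : x \in \gamma\}.
\]
This drops out of three observations: $\partial T$ is an ultrametric space, so $s$-components coincide with closed $s$-balls; the strict monotonicity of $f$ converts $\rho_f(\beta,\gamma)\le f(n)$ into $\lev_T(\min \beta \cap \gamma) \le n$; and for any element $z$ of a branch the upper cone ${\upa}z$ is linearly ordered and therefore, by maximality of the branch, is contained in the branch. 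Combined with the tower axiom that two comparable points on the same level must coincide, these observations turn the condition ``$\beta \cap \gamma$ meets some level $\le n$'' into the cleaner ``$x \in \gamma$''.

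Applying the same identification one level lower, for each $y \in \suc_k(x)$ the set $A_y := \{\gamma \in \partial T : y \in \gamma\}$ equals $B_{f(k)}(\gamma)$ for every $\gamma \in A_y$, hence is a single $f(k)$-ball. It sits inside $B_{f(n)}(\beta)$ because $y \le x$ forces $x \in {\upa}y \subset \gamma$ whenever $\gamma \ni y$. The assignment $y \mapsto A_y$ is injective, since two points $y,y' \in \suc_k(x)$ sharing a branch would be comparable and on the same level $k$, hence equal. For surjectivity---the one step that is not purely formal---one needs every branch $\gamma\ni x$ to contain some element of $\suc_k(x)$. Under the pruning hypothesis (automatic in the binary towers appearing in the subsequent Characterization Theorems), a short induction using the finite linearly ordered order intervals supplied by the tower definition shows that $\gamma$ reaches every level below $\lev_T(x)$, so it passes through exactly one element of $\suc_k(x)$. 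This yields the bijection $\suc_k(x) \leftrightarrow \{f(k)\text{-balls in } C_{f(n)}(\beta)\}$ and the first formula.

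The ``consequently'' clauses are then a formality. The quantity $|C_{f(n)}(\beta)/\C_{f(k)}(\partial T)|$ depends on $\beta$ only through the element $x=\beta\cap \lev_T^{-1}(n)$ at level $n$; conversely, any such $x$ lies on some branch, the chain ${\upa}x$ being extendable to a maximal chain by Zorn's lemma. Hence taking the minimum or supremum over $\beta\in\partial T$ amounts to doing so over the set $\{x\in T : \lev_T(x)=n\}$, yielding $\deg_k^n(T)=\theta_{f(k)}^{f(n)}(\partial T)$ and $\Deg_k^n(T)=\Theta_{f(k)}^{f(n)}(\partial T)$. The sole delicate point in the whole proof is the reachability/surjectivity step identified above; all remaining verifications are routine consequences of the ultrametric structure and the tower axioms.
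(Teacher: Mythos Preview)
Your argument is correct and essentially complete. The paper gives no proof of this proposition at all—it simply writes ``The proof is easy and is left to the reader as an exercise''—so there is nothing to compare your approach against; what you have written is exactly the natural verification the authors had in mind.

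Your flag on the surjectivity step is well taken and worth emphasizing: without the pruning hypothesis a branch $\gamma$ through $x$ need not descend to level $k$, and then the singleton $f(k)$-ball $\{\gamma\}$ would be an extra component of $C_{f(n)}(\beta)$ not accounted for by any $y\in\suc_k(x)$. The proposition as stated in the paper omits this hypothesis, but every application in the paper (e.g.\ in Sections~\ref{pf:MU} and \ref{pf:bU}, where one reads $\deg^k_0(T^L_X)=\theta^{\lambda_k}_\delta(X)$) is to canonical $L$-towers $T^L_X$, which are pruned by Proposition~\ref{p4}. So your caveat is precisely the missing hypothesis, and with it the bijection $\suc_k(x)\to\{f(k)\text{-balls in }C_{f(n)}(\beta)\}$, $y\mapsto\{\gamma\in\partial T:y\in\gamma\}$, goes through as you describe.
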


The proof is easy and is left to the reader as an exercise.

\subsection{Assigning a tower to a metric space} In the preceding section to each tower $T$ we have assigned the ultrametric space $\partial T$. In this section we describe the converse operation assigning to each metric space $X$ a pruned tower $T_X^L$ whose boundary $\partial T_X^L$ is canonically related to the space $X$.

A subset $L\subset[0,\infty)$ is called a {\em level set} if
\begin{itemize}
\item $\sup L=\infty$ and hence $L$ is $\upa$-cofinal in $[0,\infty)$;
\item $L$ is a tower in the sense that $[x,y]\cap L$ is finite for all $x,y\in L$;
\item $\inf L=0$ if $L\cap(-\infty,x]$ is infinite for some $x\in L$.
\end{itemize}
A level set $L\subset[0,\infty)$ is called {\em ${\da}$-bounded} if it has the smallest element. Otherwise, $L$ is $\da$-unbounded.

Given a metric space $X$ and a level set $L\subset[0,\infty)$ consider the set $$T^L_X=\{(C_\lambda(x),\lambda):x\in X,\,\lambda\in L\}$$endowed with the partial order $(C_\lambda(x),\lambda)\le (C_l(y),l)$ if $\lambda\le l$ and $C_\lambda(x)\subset C_l(y)$. Here, as expected, $C_\lambda(x)$ stands for the $\lambda$-connected component of $x$ in $X$.

\begin{proposition}\label{p4} The
partially ordered set $T^L_X$ is a pruned tower whose level set $\Lev(T_X^L)$ can be identified with $L$. If the metric space $X$ is isometrically homogeneous, then the tower $T_X^L$ is homogeneous.
\end{proposition}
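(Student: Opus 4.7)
The plan is to verify the tower axioms for $T_X^L$ directly from the definitions, identify $\Lev(T_X^L)$ with $L$ via the obvious projection, and then read off prunedness and (under homogeneity of $X$) homogeneity of the tower.

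To see that $T_X^L$ is $\upa$-directed, given two elements $(C_\lambda(x),\lambda)$ and $(C_{\lambda'}(x'),\lambda')$ I use $\sup L=\infty$ and the $\upa$-cofinality of $L$ in $[0,\infty)$ to pick $l\in L$ with $l\ge\max\{\lambda,\lambda',\dist(x,x')\}$; then $C_l(x)=C_l(x')$, and $(C_l(x),l)$ is a common upper bound. For the interval axiom, let $x=(C_\lambda(a),\lambda)\le y=(C_l(b),l)$ and consider any $(C_\mu(c),\mu)\in[x,y]$. Because $C_\lambda(a)\subset C_\mu(c)$, the point $a$ lies in $C_\mu(c)$, so $C_\mu(c)=C_\mu(a)$. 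Hence $[x,y]=\{(C_\mu(a),\mu):\mu\in L\cap[\lambda,l]\}$, which is finite by the level-set condition on $L$ and linearly ordered by $\mu$; this establishes that $T_X^L$ is a tower.

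To identify levels, given two elements $(C_\lambda(x),\lambda)$ and $(C_{\lambda'}(x'),\lambda')$ one checks that their smallest common upper bound has the form $(C_\mu(x),\mu)=(C_\mu(x'),\mu)$ for a suitable $\mu\in L$ with $\mu\ge\max\{\lambda,\lambda'\}$, and the two intervals down to this meet have cardinalities $|L\cap[\lambda,\mu]|$ and $|L\cap[\lambda',\mu]|$. Equality of these cardinalities forces $\lambda=\lambda'$, and so the projection $(C_\lambda(x),\lambda)\mapsto\lambda$ descends to an order-preserving bijection $\Lev(T_X^L)\to L$. Prunedness is then immediate: for a non-maximal $\lambda\in L$ with successor $\lambda^+\in L$ and any element $(C_{\lambda^+}(x),\lambda^+)$ on level $\lambda^+$, the element $(C_\lambda(x),\lambda)$ is a predecessor on level $\lambda$, witnessing $\deg^{\lambda^+}_\lambda(T_X^L)\ge 1$.

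Finally, assume $X$ is isometrically homogeneous and fix levels $\lambda\le l$ in $L$ together with two elements $(C_l(x),l), (C_l(x'),l)$ of level $l$. Pick an isometry $\phi\colon X\to X$ with $\phi(x)=x'$; since isometries preserve $\e$-connectivity for every $\e>0$, $\phi$ carries $C_l(x)$ bijectively onto $C_l(x')$ and maps its $\lambda$-connected components bijectively onto those of $C_l(x')$. Therefore $\deg_\lambda(C_l(x),l)=\deg_\lambda(C_l(x'),l)$, yielding $\deg_\lambda^l(T_X^L)=\Deg_\lambda^l(T_X^L)$, so $T_X^L$ is homogeneous. The only mildly delicate step is the collapse $C_\mu(c)=C_\mu(a)$ inside the interval computation, which pins each element of $[x,y]$ to a single witness point and thereby aligns the combinatorics of $T_X^L$ with that of $L$; everything else is bookkeeping against the definitions.
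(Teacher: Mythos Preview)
Your proof is correct and follows essentially the same approach as the paper's: verifying $\upa$-directedness via a common large $l\in L$, pinning down order intervals by reducing every element of $[x,y]$ to the single witness point $a$, identifying levels with $L$ through the second-coordinate projection, and reading off prunedness and homogeneity from the definitions. The only cosmetic difference is that the paper argues comparability of two arbitrary elements of $[x,y]$ directly (using that both contain $C_\alpha(x)$), whereas you first normalize every interval element to the form $(C_\mu(a),\mu)$ and then read off linearity from the order on $L$; both arguments amount to the same observation.
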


\begin{proof}
To see that the partially ordered set $T^L_X$ is $\upa$-directed, take two elements $(C_\alpha(x),\alpha),(C_\beta(y),\beta)\in T_X^L$ and find a number $\lambda\in L$ such that $\lambda\ge\max\{\alpha,\beta,\dist(x,y)\}$ (such a number $\lambda$ exists because $\sup L=\infty$). Then $(C_\lambda(x),\lambda)$ is an upper bound for $(C_\alpha(x),\alpha)$ and $(C_\beta(y),\beta)$ in $T^L_X$.

Next, given two points $u=(C_\alpha(x),\alpha)$, $v=(C_\beta(y),\beta)$ in $T_X^L$ with $u\le v$, we need to check that the order interval $[u,v]$ is linearly ordered and finite.
Take any two points $t_1,t_2\in[u,v]$ and for every $i\in\{1,2\}$ find a point $z_i\in X$ and a real number $\lambda_i\in L$ such that $t_i=(C_{\lambda_i}(z_i),\lambda_i)$. It follows from $u\le t_i\le v$ that $\alpha\le \lambda_i\le \beta$ and $C_\alpha(x)\subset C_{\lambda_i}(z_i)\subset C_\beta(y)$.

Without loss of generality, $\lambda_1\le \lambda_2$. Since $C_\alpha(x)\subset C_{\lambda_2}(z_1)\cap C_{\lambda_2}(z_2)$, the $\lambda_2$-connected components $
C_{\lambda_2}(z_1),C_{\lambda_2}(z_2)$ coincide and hence $C_{\lambda_1}(z_1)\subset C_{\lambda_2}(z_1)=C_{\lambda_2}(z_2)$. Thus $t_1\le t_2$, witnessing that $[u,v]$ is linearly ordered.

By the same reason $\lambda_1=\lambda_2$ implies $t_1=t_2$. This ensures that the projection
$$\pr:[u,v]\to [\alpha,\beta]\cap L,\;\;\pr:(C_\lambda(z),\lambda)\mapsto \lambda,$$
is bijective and hence $|[u,v]|\le|[\alpha,\beta]\cap L|$ is finite.

It follows that the projection $$\pr:T_X^L\to L,\;\;\pr:(C_\lambda(x),\lambda)\mapsto\lambda$$ is a monotone surjective level-preserving map and for every $\lambda\in L$ the preimage $\pr^{-1}(\lambda)=\{(C_\lambda(x),\lambda):x\in X\}$ coincides with a level of the tower $T_X^L$. So, the set $L$ can be identified with the set $\Lev(T^L_X)$ of levels of the tower $T_X^L$.

To see that the tower $T^L_X$ is pruned, take any point $t=(C_\lambda(x),\lambda)\in T_X$ on a non-minimal level $\lambda\in L$ and let $\lambda^-\in L$ be the predecessor of $\lambda$ in $L$. Then the element $(C_{\lambda^-}(x),\lambda^-)$ is a parent of $t$, witnessing that $\deg(t)>0$ and $T_X$ is pruned.
\smallskip

If the metric space $X$ is isometrically homogeneous, then the tower $T^L_X$ is homogeneous because
for each point $t=(C_\lambda(x),\lambda)\in T^L_X$ and each level $\ell\in L$, $\ell\le\lambda$, the degree $\deg_\ell(t)=|C_\lambda(x)/\C_\ell(X)|$ does not depend on the point $x$. So, $\deg^\lambda_\ell(T^L_X)=\Deg^\lambda_\ell(T^L_X)$, witnessing the homogeneity of the tower $T^L_X$.
\end{proof}

The tower $T^L_X$ will be called the {\em canonical $L$-tower} of a metric space $X$.
The boundary $\partial T^L_X$ is endowed with the ultrametric $\rho_\id$ induced by the identity scaling function $\id:L\to [0,\infty)$. This ultrametric on $\partial T_X^L$ will be called {\em canonical}.

Observe that for each point $x\in X$ the set $C_L(x)=\{(C_\lambda(x),\lambda):\lambda\in L\}$ is a branch of the tower, so the map
$$C_L:X\to\partial T^L_X,\;\;C_L:x\mapsto C_L(x),$$ called the {\em canonical map}, is well-defined.

\begin{proposition} \label{p5}
\begin{enumerate}
\item $\dist(C_L(x),C_L(y))\le \inf\{\lambda\in L:\lambda\ge d(x,y)\}$ for all $x,y\in X$.
\item The canonical map $C_L:X\to\partial T^L_X$ is macro-uniform.
\item If $0\notin L$, then the canonical map $C_L$ is micro-uniform.
\item If $L$ is $\da$-bounded, then the canonical map $C_L$ is surjective.
\item The canonical map $C_L$ has dense image $C_L(X)$ in $\partial T^L_X$.
\item The inverse multi-map $C_L^{-1}:\partial T_X^L\Ra X$ is macro-uniform if and only if $X$ has macro-uniform dimension zero.
\item If $L$ is $\da$-unbounded, then the inverse multi-map $C_L^{-1}:\partial T_X^L\Ra X$ is micro-uniform if and only if $X$ has micro-uniform dimension zero.
\end{enumerate}
\end{proposition}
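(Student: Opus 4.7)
The plan is to base the entire proof on a single dictionary between $X$ and $\partial T_X^L$: for $\lambda\in L$, one has $d(x,y)\le\lambda$ if and only if $x,y$ lie in a common $\lambda$-component, which happens exactly when the branches $C_L(x)$ and $C_L(y)$ share the node $(C_\lambda(x),\lambda)$ at level $\lambda$ of $T_X^L$. Since the canonical ultrametric on $\partial T_X^L$ is induced by the identity scaling, this identification controls $\rho_\id(C_L(x),C_L(y))$ directly in terms of how $d(x,y)$ sits inside $L$.

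For (1), I would pick any $\lambda\in L$ with $\lambda\ge d(x,y)$, possible because $\sup L=\infty$: the one-step $\lambda$-chain from $x$ to $y$ gives $C_\lambda(x)=C_\lambda(y)$, so both branches share the level-$\lambda$ node and $\rho_\id(C_L(x),C_L(y))\le\lambda$; taking infimum yields (1), and (2) is then immediate. For (3), assuming $0\notin L$, I would split into cases: if $\inf L=0$, every $\e>0$ admits $\lambda\in L$ with $\lambda\le\e$ and (1) with $\delta=\lambda$ finishes the job; if $L$ is $\da$-bounded with $\lambda_0=\min L>0$, then any $A$ with $\diam A\le\lambda_0$ lies inside a single $\lambda_0$-component, making $C_L$ constant on $A$, so $\w_{C_L}(\lambda_0)=0\le\e$ for every $\e>0$.

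For (4) and (5) the tool is prunedness of $T_X^L$ (Proposition~\ref{p4}) together with the uniqueness of upward chains in a tower. When $L$ is $\da$-bounded, a short maximality argument shows that every branch $\beta$ descends to $\lambda_0=\min L$: if its minimum lay at a higher level, prunedness would supply a parent comparable with every element of $\beta$, contradicting maximality; so $\min\beta=(C_{\lambda_0}(x),\lambda_0)$ for some $x$, and uniqueness of the upward chain forces $\beta=C_L(x)$, giving (4). When $L$ is $\da$-unbounded, given a branch $\beta$ and $\e>0$ I would pick $\lambda\in L$ with $\lambda\le\e$; prunedness places a node $(C_\lambda(y),\lambda)$ inside $\beta$, the branch $C_L(y)$ shares this node with $\beta$, and hence $\rho_\id(C_L(y),\beta)\le\e$. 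The $\da$-bounded case of (5) reduces to (4).

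The substantive items (6) and (7) are handled by the dictionary applied in reverse. The pivotal observation is that any $B\subset\partial T_X^L$ with $\diam B\le\lambda\in L$ consists of branches that pairwise agree at some level $\le\lambda$; since agreement propagates upward in a tower and each branch has at most one element per level, all branches of $B$ share a common level-$\lambda$ node $(C_\lambda(x),\lambda)$, forcing $C_L^{-1}(B)\subset C_\lambda(x)$ and thereby $\diam C_L^{-1}(B)\le\mesh\C_\lambda(X)$. Conversely, for any $x$ the set $B$ of all branches through $(C_\lambda(x),\lambda)$ satisfies $\diam B\le\lambda$ and $C_\lambda(x)\subset C_L^{-1}(B)$, giving $\mesh\C_\lambda(X)\le\w_{C_L^{-1}}(\lambda)$. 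These two estimates, combined with $\sup L=\infty$ in (6) and $\inf L=0$ (the $\da$-unboundedness hypothesis) in (7), bridge macro/micro-uniform dimension zero of $X$ with the corresponding uniformity of $C_L^{-1}$. The only mild obstacle I anticipate is crisply verifying that pairwise agreement at level $\le\lambda$ forces common agreement at level $\lambda$; this will come from the linear order of upper cones combined with uniqueness of the level-$\lambda$ element of each branch.
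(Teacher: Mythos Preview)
Your proposal is correct and follows essentially the same route as the paper's proof. The only notable packaging difference is in items (6) and (7): the paper proves each implication separately (choosing $\lambda\in L$ with $\lambda\ge\delta$ and comparing $\w_{C_L^{-1}}$ with $\mesh\C_\lambda(X)$ in one direction at a time), whereas you first isolate the two-sided estimate $\w_{C_L^{-1}}(\lambda)=\mesh\C_\lambda(X)$ for $\lambda\in L$ and then read off both directions; your worry about upgrading pairwise agreement at level $\le\lambda$ to common agreement at level $\lambda$ is harmless but unnecessary, since for the bound on $\diam C_L^{-1}(B)$ one only ever needs the pairwise statement (as the paper does), and in any case uniqueness of the level-$\lambda$ node on each branch makes the common-node claim immediate.
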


\begin{proof} 1. Given any two points $x,y\in X$ let $\lambda=\inf \big(L\cap[\dist(x,y),\infty)\big)$ and observe that $C_\lambda(x)=C_\lambda(y)$, which implies that $\dist(C_L(x),C_L(y))\le\lambda$.
\smallskip

2. The preceding item implies immediately that the canonical map $C_L:X\to\partial T^L_X$ is macro-uniform.
\smallskip

3. Assume that $0\notin L$. If $\inf L>0$, then for any positive $\delta<\inf L$ we get $\w_{C_L}(\delta)=0$ and thus $C_L$ is micro-uniform.

If $\inf L=0$, then for every $\e>0$ we can find $\delta\in L\cap(0,\e]$ and observe that $\w_{C_L}(\delta)=\delta\le \e$, witnessing that $C_L$ is micro-uniform.
\smallskip

4. If $L$ is $\da$-bounded, then $L$ has a  minimal element $\lambda_0$. It follows that each branch $\beta$ of the tower $T^L_X$ is equal to $C_L(x)$ for a point $x\in X$ whose $\lambda_0$-connected component $C_{\lambda_0}(x)$ coincides with the smallest element of the branch $\beta$. In this case the map $C_L$ is surjective.
\smallskip

5. If $L$ is $\da$-bounded, then the map $C_L$ is surjective by the preceding item and hence has dense image $C_L(X)$ in $\partial T^L_X$.

If $L$ is $\da$-unbounded, then
$\inf L=0\notin L$. Given any branch $\beta\in\partial T^L_X$ and any $\e>0$, we can find $\lambda\in L\cap(0,\e)$ and a point $x\in X$ with $(C_\lambda(x),\lambda)\in\beta$. Then $\dist(\beta,C_L(x))\le\lambda<\e$, witnessing that the image $C_L(X)$ is dense in $\partial T^L_X$.
\smallskip

6. Assume that the inverse multi-map $C_L^{-1}:\partial T^L_X\Ra X$ is macro-uniform.
To show that $X$ has macro-uniform dimension zero, we need to show that $\mesh \C_\delta(X)$ is finite for every $\delta<\infty$. Find any $\lambda\in L\cap[\delta,\infty)$ and put $\e=\w_{C_L^{-1}}(\lambda)$.

We claim that $\mesh \C_\delta(X)\le\e$. Indeed, given any $\delta$-connected component $C\in\C_\delta(X)$ and any points $x,y\in C$ we get $\dist(C_L(x),C_L(y))\le\lambda$ and $\dist(x,y)\le\diam C_L^{-1}(\{C_L(x),C_L(y)\})\le\w_{C_L^{-1}}(\lambda)\le\e$. Then $\diam C\le\e$ and $\mesh\C_\delta(X)\le\e$, witnessing that the metric space $X$ has macro-uniform dimension zero.

Now assume conversely that $X$ has macro-uniform dimension zero. In order to show that the inverse multi-map $C_L^{-1}:\partial T_X^L\Ra X$ is macro-uniform, given any $\delta<\infty$ find $\lambda\in L\cap[\delta,\infty)$ and put $\e=\mesh \C_\lambda(X)$. The number $\e$ is finite because $X$ has macro-uniform dimension zero. We claim that $\w_{C_L^{-1}}(\delta)\le\e$. Take any subset $A\subset\partial T_X^L$ with $\diam A\le\delta$. We need to show that $\diam C_L^{-1}(A)\le\e$. Take any points $x,y\in C_L^{-1}(A)$ and observe that $C_L(x),C_L(y)\in A$. Since $\dist(C_L(x),C_L(y))\le\delta\le\lambda$, $C_\lambda(x)=C_\lambda(y)$ and then $\dist(x,y)\le\mesh \C_\lambda(X)=\e$ and hence $\diam A\le\e$.
\smallskip

7. Assume that $L$ is $\da$-unbounded. If $X$ has micro-uniform dimension zero, then for any $\e>0$ we can find $\lambda\in L\cap(0,\e)$ and take $\delta>0$ so small that $\mesh \C_\delta(X)\le\lambda$. Repeating the argument from the preceding item, we can prove that $\w_{C_L^{-1}}(\delta)\le\lambda\le\e$, witnessing that $C_L^{-1}$ is micro-uniform.

Finally assume that $C_L^{-1}$ is micro-uniform. Then for every $\e>0$ we can find $\delta\in L$ with $\w_{C_L^{-1}}(\delta)\le\e$. Repeating the argument from the proof of the preceding item, we can check that $\mesh \C_\delta(X)\le\e$, witnessing that $X$ has micro-uniform dimension zero.
\end{proof}

The statements (2), (3), (6), (7) of Proposition~\ref{p5} imply:

\begin{cor}\label{c4} Let $L\subset[0,\infty)$ be a level set. The canonical map $C_L:X\to\partial T_X^L$ of a metric space $X$ into the boundary of its canonical $L$-tower $T_X^L$ is:
\begin{enumerate}
\item a macro-uniform embedding if and only if $X$ has macro-uniform dimension zero;
\item a micro-uniform embedding (if and) only if $X$ has micro-uniform dimension zero (and $L$ is $\da$-unbounded);
\item a bi-uniform embedding (if and) only if $X$ has bi-uniform dimension zero (and $L$ is $\da$-unbounded).
\end{enumerate}
\end{cor}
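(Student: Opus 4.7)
The plan is to derive the corollary mechanically from Proposition~\ref{p5} by exploiting the observation that a multi-map $\Phi:X\Ra Y$ satisfying $\Phi^{-1}(Y)=X$ is a (macro-, micro-, or bi-)uniform embedding precisely when both $\Phi$ and $\Phi^{-1}$ are (macro-, micro-, or bi-)uniform. For the canonical map $C_L$ this domain condition is automatic, so verifying the embedding property will split into checking uniformity of $C_L$ and of $C_L^{-1}$ separately, and each half is supplied by a different part of Proposition~\ref{p5}.

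For item (1) I would invoke Proposition~\ref{p5}(2) to obtain macro-uniformity of $C_L$ with no hypothesis, and Proposition~\ref{p5}(6) to equate macro-uniformity of $C_L^{-1}$ with $X$ having macro-uniform dimension zero; the conjunction gives the claimed biconditional. For the ``if'' direction of item (2), I would assume $X$ has micro-uniform dimension zero and $L$ is $\da$-unbounded, note that $\da$-unboundedness forces $\inf L=0$ and $0\notin L$, and then apply Proposition~\ref{p5}(3) to $C_L$ together with Proposition~\ref{p5}(7) to $C_L^{-1}$. For the ``only if'' direction I would start from micro-uniformity of $C_L^{-1}$: given $\e>0$, pick $\delta>0$ with $\w_{C_L^{-1}}(\delta)\le\e$, then choose $\delta'>0$ so that the first level of $L$ above $\delta'$ does not exceed $\delta$; using Proposition~\ref{p5}(1) together with the ultrametric inequality in $\partial T_X^L$, every $\delta'$-connected component of $X$ is mapped by $C_L$ into a set of diameter at most $\delta$, hence has preimage of diameter at most $\e$, so $\mesh\,\C_{\delta'}(X)\le\e$, which is micro-uniform dimension zero of $X$. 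Item (3) then follows at once by conjunction, since a bi-uniform embedding means simultaneously macro- and micro-uniform.

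The only delicate bookkeeping point I expect concerns the ``only if'' half of (2) when $L$ happens to be $\da$-bounded with $\lambda_0:=\min L$: in that regime distances in $\partial T_X^L$ lie in $\{0\}\cup[\lambda_0,\infty)$, so for $\delta<\lambda_0$ sets of $\partial T_X^L$-diameter $\le\delta$ are singletons and $\w_{C_L^{-1}}(\delta)=\mesh\,\C_{\lambda_0}(X)$; micro-uniformity of $C_L^{-1}$ then forces $\mesh\,\C_{\lambda_0}(X)=0$, whence $X$ is uniformly $\lambda_0$-discrete and has micro-uniform dimension zero vacuously. Apart from this case split, the proof is a straightforward assembly of the four inputs from Proposition~\ref{p5} named in the hint preceding the corollary.
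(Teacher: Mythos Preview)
Your approach matches the paper's: the paper simply asserts that the corollary follows from items (2), (3), (6), (7) of Proposition~\ref{p5}, and you are spelling out precisely how those four items combine. Your explicit treatment of the $\da$-bounded case in the ``only if'' half of (2) in fact goes beyond what the paper writes out.

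One small edge case in that $\da$-bounded analysis: if $\lambda_0=\min L=0$, then there is no $\delta$ with $0<\delta<\lambda_0$, so your computation $\w_{C_L^{-1}}(\delta)=\mesh\,\C_{\lambda_0}(X)$ is vacuous, and ``uniformly $0$-discrete'' does not force micro-uniform dimension zero. Here one should use the micro-uniformity of $C_L$ rather than of $C_L^{-1}$: since $0$-connected components are singletons, $C_L$ is injective and distinct branches of $\partial T_X^L$ lie at distance $\ge\lambda_1>0$ (the next level), so micro-uniformity of $C_L$ forces $X$ to be uniformly discrete at some positive scale and hence of micro-uniform dimension zero. With this patch the argument is complete.
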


Combining this corollary with Proposition~\ref{p5}(4,5) we get another

\begin{cor}\label{c5} Let $L\subset[0,\infty)$ be a level set. The canonical map $C_L:X\to\partial T_X^L$ of a metric space $X$ into the boundary of its canonical $L$-tower is:
\begin{enumerate}
\item a macro-uniform equivalence (if and) only if $X$ has macro-uniform dimension zero (and $L$ is $\da$-bounded);
\item a micro-uniform equivalence (if and) only if $X$ is a complete metric space of  micro-uniform dimension zero (and $L$ is $\da$-unbounded);
\item a bi-uniform equivalence (if and) only if $X$ is a complete metric space of  bi-uniform dimension zero (and $L$ is $\da$-unbounded).
\end{enumerate}
\end{cor}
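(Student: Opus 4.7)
The plan is to deduce each item from Corollary~\ref{c4} (which handles the embedding part of $C_L$) together with the surjectivity result of Proposition~\ref{p5}(4) and the density result of Proposition~\ref{p5}(5). Since a macro- (resp.\ micro-, bi-) uniform equivalence is, by definition, a macro- (resp.\ micro-, bi-) uniform embedding whose image fills up the whole codomain, once Corollary~\ref{c4} is invoked the only remaining task is to upgrade the always-dense image $C_L(X)\subset \partial T^L_X$ to a genuine surjection.

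For item (1) the ``only if'' direction is immediate: any macro-uniform equivalence is in particular a macro-uniform embedding, so Corollary~\ref{c4}(1) forces $X$ to have macro-uniform dimension zero. For the converse, Corollary~\ref{c4}(1) supplies the embedding, and Proposition~\ref{p5}(4) promotes the dense image to a full one using the $\da$-boundedness of $L$.

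For item (2), in the ``only if'' direction, Corollary~\ref{c4}(2) gives that $X$ has micro-uniform dimension zero, and completeness of $X$ is transferred from the completeness of the ultrametric boundary $\partial T^L_X$ back along the uniform homeomorphism $C_L$. For the converse, Corollary~\ref{c4}(2) already yields a micro-uniform embedding; to obtain surjectivity, fix a branch $\beta\in\partial T^L_X$. By Proposition~\ref{p5}(5) there is a sequence $(x_n)\subset X$ with $C_L(x_n)\to\beta$, so $(C_L(x_n))$ is Cauchy in $\partial T^L_X$; micro-uniformity of $C_L^{-1}$ then makes $(x_n)$ Cauchy in $X$, completeness of $X$ produces a limit $x\in X$, and continuity of $C_L$ gives $C_L(x)=\beta$. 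Item (3) is then the conjunction of (1) and (2): Corollary~\ref{c4}(3) provides the bi-uniform embedding from bi-uniform dimension zero and the $\da$-unboundedness of $L$, and surjectivity is produced by exactly the same Cauchy-sequence argument as in (2), using the micro-uniform half of bi-uniform dimension zero together with completeness of $X$.

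The only delicate bookkeeping point in items (2) and (3) is that a micro-uniform multi-map must be single-valued, so $C_L$ has to be injective as a function; but this is automatic once $X$ has micro-uniform dimension zero and $\inf L=0$, since two distinct points $x\ne y$ with $C_L(x)=C_L(y)$ would share every $\lambda$-component, forcing $\mesh\,\C_\lambda(X)\ge\dist(x,y)>0$ for all $\lambda\in L$ and contradicting $\mesh\,\C_\lambda(X)\to 0$ as $\lambda\to 0$. Beyond this observation, the argument is a purely mechanical combination of Corollary~\ref{c4} with Proposition~\ref{p5}(4,5), and the only ``hard'' step is the short Cauchy-completion argument used to promote the always-dense image $C_L(X)$ to a full surjection in the micro- and bi-uniform settings.
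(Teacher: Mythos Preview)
Your proposal is correct and follows essentially the same approach as the paper's proof: both deduce the embedding part from Corollary~\ref{c4}, use Proposition~\ref{p5}(4) for surjectivity in the macro case, and for the micro/bi cases combine density (Proposition~\ref{p5}(5)) with completeness of $X$. The only cosmetic difference is that for item (2) the paper phrases the surjectivity argument more abstractly---``$C_L(X)$, being uniformly homeomorphic to the complete space $X$, is complete and dense in $\partial T^L_X$, hence equals $\partial T^L_X$''---whereas you unpack this into an explicit Cauchy-sequence argument; the content is identical.
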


\begin{proof} 1. The first item is a direct corollary of Corollary~\ref{c4}(1) and Proposition~\ref{p5}(4).

2. If $C_L:X\to\partial T^L_X$ is a micro-uniform equivalence (that, is a uniform homeomorphism), then the metric space $X$ is complete because so is the ultrametric space $\partial T^L_X$. Corollary~\ref{c4}(2) implies that $X$ has micro-uniform dimension zero.

Now assume conversely that the metric space $X$ is complete and has micro-uniform dimension zero and the level set $L$ is $\da$-unbounded. By Corollary~\ref{c4}(2), the canonical map $C_L:X\to\partial T^L_X$ is a micro-uniform embedding and by Proposition~\ref{p5}(5), the image $C_L(X)$ is dense in $\partial T^L_X$. The metric space $C_L(X)\subset\partial T^L_X$, being uniformly homeomorphic to the complete metric space $X$, is complete and hence coincides with $\partial T^L_X$. Then the canonical map $C_L$, being a surjective micro-uniform embedding, is a micro-uniform equivalence.

3. The third statement can be proved by analogy with the second one.
\end{proof}

\begin{remark} By its spirit the correspondence between towers and metric spaces discussed in this section resembles the correspondence between $\IR$-trees and ultrametric spaces discussed in \cite{Hug}, \cite{MPM}.
\end{remark}

\section{Tower morphisms}

In this section we shall discuss morphisms between towers.

\subsection{Introducing tower morphisms}
In this subsection we introduce several kinds of morphisms between towers $S,T$.

A map $\varphi:S\to T$ is defined to be
\begin{itemize}
\item {\em monotone} if for any $x,y\in S$ the inequality $x<y$ implies $\varphi(x)<\varphi(y)$;
\item {\em level-preserving} if there is an injective map $\varphi_{\Lev}:\Lev(S)\to\Lev(T)$ making the following diagram commutative:
$$
\begin{CD}
S@>{\varphi}>> T\\
@V{\lev_S}VV @ VV{\lev_T}V\\
\Lev(S)@>>{\varphi_{\Lev}}>\Lev(T).
\end{CD}
$$
\end{itemize}

For a monotone level-preserving map $\varphi:S\to T$ the induced map $\varphi_\Lev:\Lev(S)\to\Lev(T)$ is monotone and injective.

A monotone level-preserving map $\varphi:S\to T$ is called
\begin{itemize}
\item {\em a tower isomorphism} if it is bijective;
\item {\em a tower embedding} if it is injective;
\item {\em a tower immersion} if it is almost injective in the sense that for any points $x,x'\in S$ with $\varphi(x)=\varphi(x')$ we get $\lev_S(x\wedge x')\le\max\{\lev_S(x),\lev_S(x')\}+1$.
\end{itemize}

\begin{proposition}\label{p6} If $\varphi:S\to T$ is a tower embedding, then for any $x,x'\in S$ the inequality $x<x'$ is equivalent to $\varphi(x')<\varphi(x')$.
\end{proposition}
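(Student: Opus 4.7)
My plan is to prove the equivalence $x<x'\iff\varphi(x)<\varphi(x')$ by observing that the forward implication is literally the definition of monotonicity, so only the converse requires work. Suppose $\varphi(x)<\varphi(x')$. By injectivity $x\ne x'$. I want to rule out the two remaining possibilities: either $x'<x$, or $x$ and $x'$ are incomparable in $S$. The first case is immediate: monotonicity would give $\varphi(x')<\varphi(x)$, contradicting $\varphi(x)<\varphi(x')$.

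The main content is ruling out incomparability. Assume toward contradiction that $x,x'$ are incomparable in $S$. Since $S$ is $\upa$-directed with finite order intervals, the element $w=x\wedge x'$ exists and satisfies $x<w$ and $x'<w$ strictly (the inequalities are strict because equality $w=x$ or $w=x'$ would force comparability of $x$ and $x'$). By monotonicity, $\varphi(x)<\varphi(w)$ and $\varphi(x')<\varphi(w)$, so the three images lie on the chain $[\varphi(x),\varphi(w)]\subset T$, and our hypothesis $\varphi(x)<\varphi(x')<\varphi(w)$ orders them.

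The key step is a level-counting argument. Level-preservation together with injectivity means that $\varphi_{\Lev}\colon\Lev(S)\to\Lev(T)$ is a strictly order-preserving injection, so the strict chain $\varphi(x)<\varphi(x')<\varphi(w)$ in $T$ pulls back to $\lev_S(x)<\lev_S(x')<\lev_S(w)$ in $\Lev(S)$. Now the interval $[x,w]$ in $S$ is finite and linearly ordered, and consecutive elements of this chain differ by exactly one level (this follows from the defining formula $|[a,b]|-1$ for the level difference between $a<b$ in a tower); hence the map $\lev_S\colon[x,w]\to[\lev_S(x),\lev_S(w)]$ is a bijection. In particular, there exists $z\in[x,w]$ with $\lev_S(z)=\lev_S(x')$. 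Then $\varphi(z)\in[\varphi(x),\varphi(w)]$ and $\lev_T(\varphi(z))=\varphi_{\Lev}(\lev_S(z))=\varphi_{\Lev}(\lev_S(x'))=\lev_T(\varphi(x'))$. Since $[\varphi(x),\varphi(w)]$ is a chain in a tower and two distinct points of a chain have distinct levels, we conclude $\varphi(z)=\varphi(x')$, whence $z=x'$ by injectivity of $\varphi$. But then $x'=z\in[x,w]$, so $x\le x'$, contradicting the assumed incomparability.

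The only nontrivial point is the bijection between $[x,w]$ and the integer interval of levels between $\lev_S(x)$ and $\lev_S(w)$; I expect this to be the main obstacle simply because it requires unpacking the formal definition of $\lev_T$ in terms of interval cardinalities, but it is essentially a consequence of the finiteness and linear orderedness of $[x,w]$ and the formula $\lev_S(b)-\lev_S(a)=|[a,b]|-1$ for $a\le b$.
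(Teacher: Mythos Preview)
Your proof is correct and follows essentially the same route as the paper's: both arguments pass to the least upper bound $x\wedge x'$ (your $w$), use the level ordering $\lev_S(x)\le\lev_S(x')\le\lev_S(x\wedge x')$, locate a point $z\in[x,x\wedge x']$ at the same level as $x'$, and then force $\varphi(z)=\varphi(x')$ inside the linearly ordered interval $[\varphi(x),\varphi(x\wedge x')]$, concluding $z=x'$ by injectivity. The only cosmetic difference is that the paper argues directly to $x\le x'$ without first splitting off the cases $x'<x$ and ``incomparable''; your case split is harmless but unnecessary, since the same interval argument already covers all possibilities at once.
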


\begin{proof} If $x<x'$, then $\varphi(x)<\varphi(x')$ by the monotonicity of $\varphi$.

Now assume that $\varphi(x)<\varphi(x')$. The chain of the inequalities  $\varphi(x)\le\varphi(x')\le\varphi(x\wedge x')$ and the level-preserving property of $\varphi$ imply that $\lev(x)\le\lev(x')\le\lev(x\wedge x')$. Then there is a point $x''\in[x,x\wedge x']$ with $\lev(x'')=\lev(x')$. For this point $x''$ we get $\varphi(x)\le\varphi(x'')\le\varphi(x\wedge x')$.  Taking into account that   $\lev(\varphi(x''))=\lev(\varphi(x'))$ and the order interval $[\varphi(x),\varphi(x\wedge x')]\subset T$ is linearly ordered, we conclude that  $\varphi(x'')=\varphi(x')$ and $x''=x'$ by the injectivity of $\varphi$. Then $x\le x''=x'$ and $\varphi(x)\ne\varphi(x')$ implies  $x<x'$.\end{proof}

\subsection{Induced multi-maps between boundaries of towers}

Each monotone map $\varphi:S\to T$ between towers induces a multi-map $\partial\varphi:\partial S\Ra\partial T$ assigning to a branch $\beta\subset S$ the set $\partial\varphi(\beta)\subset\partial T$ of all branches of $T$ that contain the linearly ordered subset $\varphi(\beta)$ of $T$. It follows that $\partial\varphi(\beta)\ne\emptyset$ and hence $(\partial\varphi)^{-1}(\partial T)=\partial S$.

The following proposition describes some properties of the boundary multi-maps.

\begin{proposition}\label{p7} For a monotone map $\varphi:S\to T$ defined on a pruned tower $S$ the induced multi-map $\partial \varphi:\partial S\Ra\partial T$ is
\begin{enumerate}
\item single valued if $\forall \beta\in\partial S\;\forall\lambda\in\Lev(T)\;\exists x\in\beta$ with $\lev_T(\varphi(x))\le\lambda$;
\item micro-uniform if \ $\forall \lambda\in\Lev(T)\;\exists \nu\in\Lev(S)\;\forall x\in S$ \ $\lev_S(x)\le\nu\;\Ra\;\lev_T(\varphi(x))\le\lambda$;
\item macro-uniform if \ $\forall \nu\in\Lev(S)\;\exists \lambda\in\Lev(T)\;\forall x\in S$ \  $\lev_S(x)\le\nu\;\Ra\; \lev_T(\varphi(x))\le\lambda$.
\end{enumerate}
\end{proposition}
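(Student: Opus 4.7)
The proof rests on one elementary observation: if $\gamma\in\partial\varphi(\beta)$ then by definition $\varphi(\beta)\subset\gamma$, so for any two branches $\beta_1,\beta_2\in\partial S$, any $\gamma_i\in\partial\varphi(\beta_i)$ ($i=1,2$), and any common point $z\in\beta_1\cap\beta_2$, the image $\varphi(z)$ lies in $\gamma_1\cap\gamma_2$ and hence
\[
\lev_T\bigl(\min(\gamma_1\cap\gamma_2)\bigr)\;\le\;\lev_T(\varphi(z)).
\]
Translating through scaling functions $f_S$, $f_T$ that define the ultrametrics on $\partial S$, $\partial T$, all three assertions reduce to producing a witness $z$ of sufficiently low level in $S$.

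\textbf{Statement (1).} I fix $\gamma_1,\gamma_2\in\partial\varphi(\beta)$ and pick an arbitrary $y\in\gamma_1$. Since $\gamma_1$ is linearly ordered and contains $\varphi(\beta)$, the point $y$ is comparable with every $\varphi(x)$, $x\in\beta$. If $y\ge\varphi(x)$ for some such $x$, then $y$ lies in the linearly ordered upper cone $\upa\varphi(x)$, which is already contained in $\gamma_2$. Otherwise $y\le\varphi(x)$ for every $x\in\beta$, forcing $\lev_T(y)\le\lev_T(\varphi(x))$ for all such $x$. The hypothesis of (1) makes the set $\{\lev_T(\varphi(x)):x\in\beta\}$ coinitial in $\Lev(T)$: if $\Lev(T)$ is $\da$-unbounded, no such $y$ exists; if $\Lev(T)$ has a minimum $\lambda_0$ there is $x_0\in\beta$ with $\lev_T(\varphi(x_0))=\lambda_0$, and $y\le\varphi(x_0)$ at the same level forces $y=\varphi(x_0)\in\gamma_2$. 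Either way $\gamma_1\subset\gamma_2$ and, by symmetry, $\gamma_1=\gamma_2$.

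\textbf{Statements (2) and (3).} Given $A\subset\partial S$ and $\beta_1,\beta_2\in A$, I produce a common point $z\in\beta_1\cap\beta_2$ of controlled level. If $\beta_1\ne\beta_2$, take $z=\min(\beta_1\cap\beta_2)$, whose $S$-level is controlled by $\diam A$ via $f_S$. If $\beta_1=\beta_2$, I invoke the prunedness of $S$, which guarantees that $\beta_1$ meets every level of $S$, to pick $z\in\beta_1$ at any prescribed level $\nu\in\Lev(S)$. In either case one arranges $\lev_S(z)\le\nu$ for a chosen $\nu$, whereupon the hypothesis of (2) or (3) produces $\lambda\in\Lev(T)$ with $\lev_T(\varphi(z))\le\lambda$, and the key inequality gives $\rho_{f_T}(\gamma_1,\gamma_2)\le f_T(\lambda)$ for all $\gamma_i\in\partial\varphi(\beta_i)$. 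Hence $\diam\partial\varphi(A)\le f_T(\lambda)$. For (2), given $\e>0$ pick $\lambda$ with $f_T(\lambda)\le\e$, apply the hypothesis to obtain $\nu$, and set $\delta=f_S(\nu)$; for (3), given $\delta<\infty$ pick $\nu$ with $f_S(\nu)\ge\delta$, apply the hypothesis to obtain $\lambda$, and set $\e=f_T(\lambda)$. The subtle point throughout is the multi-valued case $\beta_1=\beta_2$: without prunedness of $S$ one cannot find a witness $z$ inside a single branch at a controlled low level, and the diameter of $\partial\varphi(\beta)$ cannot be bounded. Prunedness is exactly the hypothesis that handles this case.
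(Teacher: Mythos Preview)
Your approach is essentially that of the paper: find a common point of two branches at a controlled $S$-level and push it through $\varphi$ to bound the $T$-distance of their images. Your direct argument for (1) is a valid alternative to the paper's contrapositive, and your explicit isolation of the role of prunedness (needed precisely for the diagonal case $\beta_1=\beta_2$) is in fact clearer than the paper's implicit use of it.

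There is, however, a genuine gap in your treatment of (2). You write ``given $\e>0$ pick $\lambda$ with $f_T(\lambda)\le\e$,'' but this is possible only when $T$ is $\da$-unbounded: by the definition of a scaling function, $\inf f_T(\Lev(T))=0$ holds exactly in that case. If $T$ is $\da$-bounded with minimal level $\lambda_0$ and $\e<f_T(\lambda_0)$, no such $\lambda$ exists, and your key inequality yields at best $\rho_{f_T}(\gamma_1,\gamma_2)\le f_T(\lambda_0)$, which does not give $\w_{\partial\varphi}(\delta)\le\e$. The paper treats this case separately: applying the hypothesis of (2) with $\lambda=\lambda_0$ produces $\nu\in\Lev(S)$, and for $\delta=f_S(\nu)$ one obtains $\w_{\partial\varphi}(\delta)=0$. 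The extra observation needed is that a point $\varphi(z)$ at the \emph{minimal} level $\lambda_0$ of $T$ lies in a unique branch, namely ${\upa}\varphi(z)$, so $\varphi(z)\in\gamma_1\cap\gamma_2$ forces $\gamma_1=\gamma_2$ outright rather than merely bounding their distance. Your sketch does not supply this, and without it the micro-uniformity claim is incomplete whenever $T$ is $\da$-bounded.
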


\begin{proof}  We recall that the boundaries $\partial S$ and $\partial T$ are endowed with ultrametrics $\rho_f$ and $\rho_g$ generated by some scaling functions $f:\Lev(S)\to[0,\infty)$ and $g:\Lev(T)\to[0,\infty)$.
\smallskip

1. Assuming that $\partial\varphi$ is not single-valued, we can find a branch $\beta\in\partial S$ and two distinct branches $b_1,b_2\in\partial T$ such that $b_1\cap b_2\supset\varphi(\beta)$. Since $b_1\ne b_2$, there is a level $\lambda\in\Lev(T)$ of $T$ such that the intersections $b_1\cap \lambda$ and $b_2\cap\lambda$ are not empty and distinct. For this level $\lambda$ no point $x\in\beta$ exists with $\lev_T(\varphi(x))\le\lambda$.
\smallskip

2. Assume that $\forall \lambda\in\Lev(T)\;\exists \nu\in\Lev(S)\;\forall x\in S$ \ $\lev_S(x)\le\nu\;\Ra\;\lev_T(\varphi(x))\le\lambda$. The micro-uniform property of  the boundary map $\partial \varphi:\partial S\Ra\partial T$  will follow as soon as for every $\e>0$ we find $\delta>0$ with $\w_{\partial\varphi}(\delta)\le\e$.

If the tower $T$ is ${\da}$-bounded, then the set $\Lev(T)$ has the smallest element $\lambda_0$. By our assumption, for the level $\lambda_0$ there is a level $\nu\in\Lev(S)$ such that $\lev_T(\varphi(x))\le\lambda_0$ for all $x\in S$ with $\lev_S(x)\le\nu$. Let $\delta=f(\nu)$. We claim that $\w_{\partial\varphi}(\delta)=0$.
This will follow as soon as we check that for each subset $A\subset\partial S$ with $\diam A\le\delta$ the image $\partial \varphi(A)$ is a singleton. Take any two branches $b_1,b_2\in\partial \varphi(A)$ and find two branches $a_1,a_2\in A$ with $b_i\in\partial\varphi(a_i)$ for $i\in\{1,2\}$. Since $\rho_f(a_1,a_2)\le\delta=f(\nu)$, there is a point $x\in a_1\cap a_2\cap\nu$. Then $\varphi(x)\in\lambda_0\cap b_1\cap b_2$ and the minimality of $\lambda_0$ implies that $b_1=b_2$.

Next, assume that the tower $T$ is $\da$-unbounded. In this case for every $\e>0$ we can find a level $\lambda\in\Lev(T)$ with $g(\lambda)\le \e$. By our hypothesis, for the level $\lambda$ there is a level $\nu\in\Lev(S)$ such that $\lev_T(\varphi(x))\le\lambda$ for each $x\in S$ with $\lev_S(x)\le\nu$. Let $\delta=f(\nu)$. We claim that $\w_{\partial \varphi}(\delta)\le\e$. This will follow as soon as we check that for each subset $A\subset\partial S$ with $\diam A\le\delta$ the image $\partial \varphi(A)$ has diameter $\le \e$. Take any two branches $b_1,b_2\in\partial \varphi(A)$ and find two branches $a_1,a_2\in A$ with $b_i\in\partial\varphi(a_i)$ for $i\in\{1,2\}$. Since $\rho_f(a_1,a_2)\le\delta$, there is a point $x\in a_1\cap a_2\cap\nu$. Since $\lev_T(\varphi(x))\le\lambda$ and $\varphi(x)\in b_1\cap b_2$, we get $\rho_g(b_1,b_2)\le g(\lambda)<\e$.
\smallskip

3. Assume that $\forall \nu\in\Lev(S)\;\exists \lambda\in\Lev(T)\;\forall x\in S$ \  $\lev_S(x)\le\nu\;\Ra\; \lev_T(\varphi(x))\le\lambda$. The macro-uniform property of the boundary map $\partial \varphi:\partial S\Ra\partial T$  will follow as soon as we check that for every $\delta<\infty$ the oscillation $\w_{\partial\varphi}(\delta)$ is finite.
Find a level $\nu\in\Lev(S)$ such that $f(\nu)\ge\delta$. By our hypothesis, for the level $\nu$ there is a level $\lambda\in\Lev(T)$ such that $\lev_T(\varphi(x))\le\lambda$ for each $x\in S$ with $\lev_S(x)\le\nu$. We claim that $\w_{\partial\varphi}(\delta)\le \e$ where $\e=g(\lambda)$. This will follow as soon as we check that for each subset $A\subset\partial S$ with $\diam A\le\delta$ the image $\partial \varphi(A)$ has diameter $\le \e$. Take any two branches $b_1,b_2\in\partial \varphi(A)$ and find two branches $a_1,a_2\in A$ with $b_i\in\partial\varphi(a_i)$ for $i\in\{1,2\}$. Since $\rho_f(a_1,a_2)\le\delta\le f(\nu)$, there is a point $x\in a_1\cap a_2\cap\nu$. Then $\lev_T(\varphi(x))\le\lambda$ and $\varphi(x)\in b_1\cap b_2$ implies that $\rho_g(b_1,b_2)\le g(\lambda)=\e$.
\end{proof}

Proposition~\ref{p7} implies

\begin{cor}\label{c6} For a level-preserving monotone  map $\varphi:S\to T$ defined on a pruned tower $S$ the induced multi-map $\partial \varphi:\partial S\Ra\partial T$ is
\begin{enumerate}
\item macro-uniform;
\item bi-uniform if $\varphi_\Lev(\Lev(S))$ is $\da$-cofinal in $\Lev(T)$.
\end{enumerate}
\end{cor}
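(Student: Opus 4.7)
The plan is to derive both statements directly from Proposition~\ref{p7}, using the level-preserving hypothesis to turn conditions on levels in $S$ into conditions on levels in $T$ via the induced map $\varphi_\Lev:\Lev(S)\to\Lev(T)$. Recall that $\varphi_\Lev$ is monotone and injective, and by definition $\lev_T(\varphi(x))=\varphi_\Lev(\lev_S(x))$ for all $x\in S$.

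For item (1), I will verify condition (3) of Proposition~\ref{p7}. Given any $\nu\in\Lev(S)$, I simply set $\lambda=\varphi_\Lev(\nu)$. Then for every $x\in S$ with $\lev_S(x)\le\nu$ the monotonicity of $\varphi_\Lev$ yields $\lev_T(\varphi(x))=\varphi_\Lev(\lev_S(x))\le\varphi_\Lev(\nu)=\lambda$, which is exactly what is needed. Thus $\partial\varphi$ is macro-uniform.

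For item (2), it remains to check micro-uniformity, i.e.\ condition (2) of Proposition~\ref{p7}. Given an arbitrary $\lambda\in\Lev(T)$, I will use the $\da$-cofinality of $\varphi_\Lev(\Lev(S))$ in $\Lev(T)$ to pick $\nu\in\Lev(S)$ with $\varphi_\Lev(\nu)\le\lambda$. Then for every $x\in S$ with $\lev_S(x)\le\nu$, monotonicity again gives $\lev_T(\varphi(x))=\varphi_\Lev(\lev_S(x))\le\varphi_\Lev(\nu)\le\lambda$, which is the required implication. Combined with (1), this gives bi-uniformity.

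There is no real obstacle here: the whole point of Proposition~\ref{p7} is to reduce metric uniformity of $\partial\varphi$ to combinatorial statements about how $\varphi$ moves levels, and the level-preserving hypothesis makes those combinatorial statements trivial restatements of the monotonicity (and, for (2), of the $\da$-cofinality) of $\varphi_\Lev$. The only thing one should be slightly careful about is that $\varphi_\Lev$ really is monotone on $\Lev(S)$, but this is immediate: if $\lev_S(x)<\lev_S(x')$ in $S$, then for any $z\le x\wedge x'$ in $S$ (which exists because $S$ is pruned), $\varphi(z)\le\varphi(x)\wedge\varphi(x')$ and the level-preserving property forces $\varphi_\Lev(\lev_S(x))<\varphi_\Lev(\lev_S(x'))$.
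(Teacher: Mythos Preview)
Your proof is correct and follows exactly the route the paper intends: the corollary is stated immediately after Proposition~\ref{p7} with the remark ``Proposition~\ref{p7} implies'', and you have simply spelled out how conditions~(3) and~(2) of that proposition are satisfied via the monotone injective map $\varphi_\Lev$.

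One minor comment: your final aside about verifying the monotonicity of $\varphi_\Lev$ is unnecessary (the paper states this fact right after introducing level-preserving maps), and the sketch you give there is a bit muddled---taking $z\le x\wedge x'$ does not by itself yield the desired comparison. The clean argument is: given levels $\mu<\nu$ in $\Lev(S)$, pick any $y$ with $\lev_S(y)=\nu$ and, using that $S$ is pruned, descend to some $x<y$ with $\lev_S(x)=\mu$; then $\varphi(x)<\varphi(y)$ gives $\varphi_\Lev(\mu)=\lev_T(\varphi(x))<\lev_T(\varphi(y))=\varphi_\Lev(\nu)$. But since this fact is already recorded in the paper, you can simply drop the aside.
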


Next, we establish some properties of the boundary multi-maps induced by tower immersions.

\begin{proposition}\label{p8} For a tower immersion $\varphi:S\to T$ defined on a pruned tower $S$ the induced multi-map $\partial \varphi:\partial S\Ra\partial T$ is
\begin{enumerate}
\item a macro-uniform embedding;
\item a bi-uniform embedding if the tower $S$ is $\da$-unbounded;
\item a macro-uniform equivalence if $\varphi(S)$ is $\da$-cofinal in $T$.
\item a bi-uniform equivalence if $S$ is $\da$-unbounded and $\varphi(S)$ is $\da$-cofinal in $T$.
\end{enumerate}
\end{proposition}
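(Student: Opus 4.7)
Corollary~\ref{c6} already provides macro-uniformity of $\partial\varphi$ for free, and even its bi-uniformity whenever $\varphi_\Lev(\Lev(S))$ is $\da$-cofinal in $\Lev(T)$ -- a property that follows automatically from the $\da$-unboundedness of $\Lev(S)$, since a monotone injection from $\Lev(S)$ into the subset $\Lev(T)\subset\IZ$ cannot collapse an unbounded set to a bounded one. So for (1) and (2) the real work is to control the inverse multi-map $(\partial\varphi)^{-1}$; for (3) and (4) we must additionally produce, for each branch $b\in\partial T$, a branch $a\in\partial S$ with $\varphi(a)\subset b$.

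The engine for handling the inverse map is the following application of almost injectivity. Let $a,a'\in\partial S$ be distinct branches and pick $b\in\partial\varphi(a)$, $b'\in\partial\varphi(a')$. Since $\upa t$ is a chain through $t:=\min(b\cap b')$ and $b,b'$ are maximal chains through $t$, both contain $\upa t$, so $b\cap b'=\upa t$. Choose any level $\nu\in\Lev(S)$ with $\varphi_\Lev(\nu)>\lev_T(t)$. Since $S$ is pruned, both $a$ and $a'$ meet level $\nu$, at unique points $x$ and $x'$. Their images lie on the common level $\varphi_\Lev(\nu)$ of the chain $\upa t\subset b\cap b'$, hence coincide. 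The almost injectivity of $\varphi$ now forces $\lev_S(x\wedge x')\le\nu+1$, and as $x\wedge x'\in\upa x\cap\upa x'\subset a\cap a'$, we obtain $\rho_f(a,a')\le f(\nu+1)$.

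For item (1), this yields macro-uniformity of $(\partial\varphi)^{-1}$: given $\delta<\infty$, the case $S$ $\upa$-bounded is trivial since $\diam\partial S<\infty$; otherwise $T$ is $\upa$-unbounded too (by the same monotone-injection argument), and we choose $\mu\in\Lev(T)$ with $g(\mu)>\delta$ followed by $\nu\in\Lev(S)$ with $\varphi_\Lev(\nu)\ge\mu$, giving $\w_{(\partial\varphi)^{-1}}(\delta)\le f(\nu+1)<\infty$. For item (2), micro-uniformity of $(\partial\varphi)^{-1}$ is dual: given $\e'>0$, the $\da$-unboundedness of $\Lev(S)$ allows $\nu$ to be chosen so small that $f(\nu+1)\le\e'$, after which any positive $\delta'<g(\varphi_\Lev(\nu))$ works.

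For the surjectivity required by (3) and (4), given $b\in\partial T$ and any $t\in b$, $\da$-cofinality of $\varphi(S)$ in $T$ supplies $x_0\in S$ with $\varphi(x_0)\le t$; raising $x_0$ inside $\upa x_0$ to a level $\nu\in\Lev(S)$ with $\varphi_\Lev(\nu)\ge\lev_T(t)$ produces $x_\nu$ whose image is forced to equal the level-$\varphi_\Lev(\nu)$ element of $\upa t\subset b$, so the whole chain $\upa x_\nu$ maps into $b$. The delicate half is the downward extension: at each lower level $\nu'\in\Lev(S)$ on which $b$ has a representative $t_{\nu'}$ on the $T$-level $\varphi_\Lev(\nu')$, one must select a parent of the previously chosen element whose image equals $t_{\nu'}$. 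Almost injectivity is decisive here: the preimages in $S$ of any fixed element of $T$ all share a common ancestor one level up, so arranging the level-$(\nu'+1)$ point of the chain to be this common ancestor (substituting among immersion-equivalent lifts when necessary) guarantees that the required parent exists. Verifying this coherent downward selection, and that the resulting chain is maximal in $S$, is the main technical obstacle, and it is precisely where almost injectivity -- rather than mere level-preserving monotonicity -- becomes indispensable.
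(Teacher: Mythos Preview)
Your argument for (1) and (2) is correct and essentially identical to the paper's: Corollary~\ref{c6} handles the forward direction, and your ``engine'' --- two branches whose images are $\delta$-close must contain points $x,x'$ at a common $S$-level with $\varphi(x)=\varphi(x')$, whence $x\wedge x'$ sits at most one level higher --- is exactly how the paper bounds $\w_{(\partial\varphi)^{-1}}$.

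For the surjectivity in (3) and (4) you have the right ingredients but not a working construction, and the phrase ``substituting among immersion-equivalent lifts when necessary'' points to the trouble: a downward induction in which you repair the level-$(\nu'+1)$ choice after the fact can cascade upward, since the replacement is only guaranteed to share an ancestor with the old choice at level $\nu'+2$, which need not be the $x_{\nu'+2}$ you already fixed. The paper avoids any inductive descent. After treating separately the easy case where $T$ (hence $S$) is $\da$-bounded, it fixes a $\da$-cofinal $L\subset\Lev(S)$ and for each $\lambda\in L$ chooses $x_\lambda\in\lambda\cap\varphi^{-1}(\beta)$ \emph{independently} --- your lifting argument shows this set is non-empty. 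One never tries to make the $x_\lambda$ themselves cohere; instead one passes to the successors $x_\lambda^+$ at level $\lambda+1$. For $\nu<\lambda$ in $L$, the level-$\lambda$ point $z_\lambda$ of $\upa x_\nu^+$ has $\varphi(z_\lambda)=\varphi(x_\lambda)$ (both lie in $\beta$ at the same $T$-level), so the immersion property forces $z_\lambda\wedge x_\lambda$ to level $\le\lambda+1$, giving $x_\nu^+\le z_\lambda\wedge x_\lambda\le x_\lambda^+$. Thus $\{x_\lambda^+:\lambda\in L\}$ is automatically a chain, and any branch $\alpha$ extending it satisfies $\varphi(\alpha)\subset\beta$. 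Your insight that almost injectivity pins all preimages under a single ancestor one level up is exactly what drives this; the point is to apply it \emph{after} choosing the lifts, not as a corrective during a descent.
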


\begin{proof} Let $\varphi:S\to T$ be a tower immersion. It follows from the definition of $\partial\varphi$ that $(\partial\varphi)^{-1}(\partial T)=\partial S$.
The boundaries $\partial S$ and $\partial T$ are endowed with the ultrametrics $\rho_f$ and $\rho_g$ induced by some scaling functions $f:\Lev(S)\to(0,\infty)$ and $g:\Lev(T)\to(0,\infty)$.

\smallskip

1) Corollary~\ref{c6} implies that the boundary multi-map $\partial\varphi:\partial S\Ra\partial T$ is macro-uniform. It remains to check that the inverse multi-map $(\partial \varphi)^{-1}:\partial S\Ra\partial T$ is macro-uniform. This is clear if the tower $S$ is $\upa$-bounded (in which case $\partial S$ has finite diameter). So we assume that $S$ is ${\upa}$-unbounded. The tower immersion $\varphi$, being monotone and level-preserving, induces a monotone injective map $\varphi_\Lev:\Lev(S)\to\Lev(T)$. Now we see that $\varphi_\Lev(\Lev(S))$ is $\upa$-cofinal in $\Lev(T)$ and the tower $T$ is ${\upa}$-unbounded.

Given any finite $\delta$ we should find a finite $\e$ such that $\w_{(\partial\varphi)^{-1}}(\delta)\le\e$, which means that $\diam(\partial\varphi)^{-1}(A)\le\e$ for any subset $A\subset\partial T$ with $\diam A\le\delta$.
Since the tower $T$ is ${\upa}$-unbounded, there is a level $\lambda\in\Lev(T)$ such that $g(\lambda)\ge\delta$. The $\upa$-cofinality of the set $\varphi_\Lev(\Lev(S))$ in $\lev(T)$ allows us to  assume additionally that $\lambda=\varphi_\Lev(\nu)$ for some level $\nu\in\Lev(S)$.
We claim that the finite number $\e=f(\nu+1)$ has the desired property.
Take any two branches $b_1,b_2\in(\partial\varphi)^{-1}(A)$ and find two branches $a_1,a_2\in A$ with $b_i\in(\partial \varphi)^{-1}(a_i)$ for $i\in\{1,2\}$. The latter inclusion is equivalent to $a_i\in\partial\varphi(b_i)$. Since $\rho_g(a_1,a_2)\le\diam A\le\delta\le g(\lambda)$, there is a point $y\in\lambda\cap a_1\cap a_2$.

For every $i\in\{1,2\}$ let $x_i$ be the unique point of the intersection $b_i\cap\nu$. It follows that $\varphi(x_i)\subset\varphi(b_i)\cap\varphi(\nu)\subset a_i\cap\lambda=y$. Since $\varphi$ is a tower immersion, $\lev(x_1\wedge x_2)\le\max\{\lev(x_1),\lev(x_2)\}+1=\nu+1$. Then $x_1\wedge x_2\subset b_1\cap b_2$ and then $\rho_f(b_1,b_2)\le f(\lev(x_1\wedge x_2))\le f(\nu+1)=\e$.
\smallskip

2) Assume that the tower $S$ is ${\da}$-unbounded. Since the map  $\varphi_\Lev:\Lev(S)\to\Lev(T)$ is injective and monotone, the set $\varphi_\Lev(\Lev(S))$ is ${\da}$-cofinal in $\Lev(T)$ and the tower $T$ is $\da$-unbounded. By Corollary~\ref{c6}, the map $\partial \varphi:\partial S\Ra\partial T$ is bi-uniform and by the preceding item, the inverse multi-map $(\partial\varphi)^{-1}:\partial T\Ra\partial S$ is macro-uniform. It remains to check that this map micro-uniform. Since $S$ is $\da$-unbounded, for any $\e>0$ we can find a level $\nu\in\Lev(S)$ with $f(\nu+1)\le\e$. Since $T$ is $\da$-unbounded, we can find a level $\lambda\le\varphi_\Lev(\nu)$ in $T$. Repeating the argument from the preceding item we can show that the positive real number $\delta=g(\lambda)$ satisfies the inequality $\w_{(\partial\varphi)^{-1}}(\delta)\le\e$, witnessing that the multi-map $(\partial\varphi)^{-1}$ is micro-uniform.
\smallskip

3) The third statement will follow from the first one as soon as we check that $\partial\varphi(\partial S)=\partial T$ provided $\varphi(S)$ is $\da$-cofinal in $T$.

If $T$ is $\da$-bounded, then the ordered set $\lev(T)$ contains the smallest element $\lambda_0$. Then each branch $\beta\in\Lev(T)$ is equal to ${\upa}y$ where $\{y\}=\beta\cap\lambda_0$.
The cofinality of $\varphi(S)$ in $T$ implies that $\lambda_0\subset\varphi(S)$.
Take any point $x\in S$ with $\varphi(x)=y$ and observe that ${\upa}x$ is a branch in $\partial S$ whose image $\partial \varphi({\upa}x)={\upa}y=\beta$.

If $T$ is ${\da}$-unbounded, then so is the tower $S$. Let us show that the tower $T$ is pruned. Take any point $t\in T$ and use the cofinality of $\varphi(S)$ in $T$ in order to find a point $s\in S$ with $\varphi(s)\le t$. Since $S$ is pruned, there is a point $s'\in S$ with $s'<s$ and the monotonicity of $\varphi$ guarantees that $\varphi(s')<\varphi(s)\le t$, witnessing that $T$ is pruned.

Given any branch $\beta\in\partial T$ we are going to find a branch $\alpha\in\partial S$ with $\partial \varphi(\alpha)=\beta$.
Taking into account that the tower $T$ is pruned and $\da$-unbounded, we conclude that the branch $\beta$ meets all the levels of $T$. Fix a ${\da}$-cofinal subset $L\subset\Lev(S)$ such that $\lambda+1\notin L$ for every $\lambda\in L$.

For every $\lambda\in L$ pick a point $x_\lambda\in\lambda\cap\varphi^{-1}(\beta)$. Such a point $x_\lambda$ exists because $\beta$ meets the level $\varphi(\lambda)$ of $T$.
Let $x^+_\lambda$ be the unique point of the intersection ${\upa}x_\lambda\cap(\lambda+1)$.

We claim that the set $\{x^+_\lambda:\lambda\in L\}$ is linearly ordered. Indeed, take any two levels $\nu<\lambda$ and let $z_\lambda$ be the unique point of the intersection $\lambda\cap{\upa}(x^+_\nu)$. Taking into account that $$\varphi(z_\lambda)\ge\varphi(x_\nu^+)\in{\upa}\varphi(x_\nu^+)\subset\beta,$$ we see that $\varphi(z_\lambda)\in\beta\cap\varphi(\lambda)=\{\varphi(x_\lambda)\}$ and hence $\varphi(z_\lambda)=\varphi(x_\lambda)$. Since $\varphi$ is a tower immersion, $\lev(z_\lambda\wedge x_\lambda)\le \lambda+1$ and thus $x_\nu^+\le z_\lambda\wedge x_\lambda\le x_\lambda^+$.

The linearly ordered subset $\{x_\lambda^+:\lambda\in L\}$ can be enlarged to a branch $\alpha\in\partial S$ whose image $\partial\varphi(\alpha)$ coincides with the branch $\beta$.
\smallskip

4) If $\varphi(S)$ is cofinal in $T$ and the tower $S$ is $\da$-unbounded, then $\partial \varphi$ is a bi-uniform equivalence, being a surjective bi-uniform embedding according to the  statements (2) and (3) of Proposition~\ref{p8}.
\end{proof}

\subsection{Level subtowers}
It is clear that each $\upa$-directed subset $S$ of a tower $T$ is a tower with respect to the partial order inherited from $T$. In this case we say that $S$ is a {\em subtower} of $T$. A typical example of a subtower of $T$ is a {\em level subtower}  $$T^L=\{x\in T:\lev_T(x)\in L\},$$ where $L\subset\Lev(T)$ is an $\upa$-cofinal subset of the level set of the tower $T$.

Proposition~\ref{p8} implies

\begin{cor}\label{c7} Let $T$ be a pruned tower and $L$ be a ${\upa}$-cofinal subset of \ $\Lev(T)$. The multi-map $\partial\id:\partial T^L\Ra \partial T$ induced by the identity embedding $\id:T^L\to T$ is
\begin{enumerate}
\item a macro-uniform equivalence;
\item a bi-uniform equivalence if $L$ is ${\da}$-cofinal in $\Lev(T)$.
\end{enumerate}
\end{cor}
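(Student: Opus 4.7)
My plan is to deduce Corollary~\ref{c7} from Proposition~\ref{p8} applied to the identity inclusion $\id:T^L\to T$, so the preliminary step is to check that $\id$ is a tower embedding and that $T^L$ is itself a pruned tower. Monotonicity and injectivity of $\id$ are immediate. Level-preservation is verified by noting that for $x,y\in T^L$ the $T^L$-interval $[x,y]_{T^L}$ coincides with $[x,y]_T\cap T^L$ and is in canonical bijection with $L\cap[\lev_T(x),\lev_T(y)]$; a small counting argument around $x\wedge_{T^L}y$ shows that $\lev_{T^L}(x)=\lev_{T^L}(y)$ if and only if $\lev_T(x)=\lev_T(y)$, so $\Lev(T^L)$ is canonically identified with $L\subset\Lev(T)$. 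The $\upa$-directedness of $T^L$ follows by combining $\upa$-directedness of $T$ with the $\upa$-cofinality of $L$. Pruning of $T^L$ is obtained by iterating the pruning of $T$: for a non-maximal $\mu\in L$ with successor $\mu^+\in L$ and $x\in T^L$ at level $\mu^+$, descending step-by-step through the levels of $\Lev(T)$ between $\mu^+$ and $\mu$ produces some $y\in T$ at level $\mu$ with $y\le x$, and $y\in T^L$ is then a parent of $x$ in $T^L$. Since every tower embedding is a tower immersion, Proposition~\ref{p8} applies to $\id$.

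For item (1), Proposition~\ref{p8}(1) yields that $\partial\id$ is already a macro-uniform embedding, so only the surjectivity $\partial\id(\partial T^L)=\partial T$ remains. Given $\beta\in\partial T$, I take $\alpha:=\beta\cap T^L$; as a subset of the chain $\beta$ it is linearly ordered, and it is non-empty because a branch of the pruned tower $T$ meets every level of $T$, in particular the levels from $L$. To check maximality of $\alpha$ in $T^L$, let $y\in T^L$ be comparable to every element of $\alpha$ and set $\lambda_y:=\lev_T(y)\in L$. The branch $\beta$ meets the level $\lambda_y$ at a single point $x_y$, and $x_y\in\beta\cap T^L=\alpha$, so $y$ is comparable to $x_y$; but distinct elements on a common level of a tower are incomparable, which forces $y=x_y\in\alpha$. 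Hence $\alpha\in\partial T^L$ and $\alpha\subset\beta$ gives $\beta\in\partial\id(\alpha)$, establishing surjectivity and completing the proof of the macro-uniform equivalence in (1).

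For item (2), I assume in addition that $L$ is $\da$-cofinal in $\Lev(T)$, and split according to whether $T$ is $\da$-unbounded. If $T$ is $\da$-unbounded, then the $\da$-cofinality of $L$ forces $L$ to be $\da$-unbounded too, so $T^L$ is $\da$-unbounded, and $T^L$ is also $\da$-cofinal in $T$ (descend in $T$ via its pruning to the level supplied by $\da$-cofinality of $L$); then Proposition~\ref{p8}(4) delivers the bi-uniform equivalence. If instead $T$ is $\da$-bounded with minimal level $\lambda_0$, then $\lambda_0\in L$ and $T^L$ shares the same minimal level, branches of both towers are in canonical bijection with the points of $\lambda_0$ via $x_0\mapsto{\upa}_Tx_0$ and $x_0\mapsto{\upa}_{T^L}x_0$, and $\partial\id$ reduces to a single-valued bijection; matching the scaling functions on $L$ and $\Lev(T)$ makes bi-uniformity a direct verification, with micro-uniformity automatic because the common positive minimum distance forces preimages of sufficiently small balls to be singletons.

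The principal obstacle is the surjectivity argument in item~(1): since $L$ is only assumed $\upa$-cofinal, the image $\id(T^L)=T^L$ need not be $\da$-cofinal in $T$, so Proposition~\ref{p8}(3) cannot be applied directly, and maximality of $\beta\cap T^L$ in $T^L$ has to be shown by hand; the key ingredients are that branches of a pruned tower meet every level and that distinct points on a common level of a tower are incomparable. A smaller bookkeeping point is the $\da$-bounded subcase of~(2), which falls outside the scope of Proposition~\ref{p8}(4) and must be closed by a direct comparison of ultrametrics.
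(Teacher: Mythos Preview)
Your proof is correct and follows the paper's route, which records the corollary as a direct consequence of Proposition~\ref{p8} without spelling out details. You rightly note that Proposition~\ref{p8}(3) does not literally cover item~(1), since $T^L$ need not be $\da$-cofinal in $T$ when $L$ is only $\upa$-cofinal; your surjectivity argument via $\alpha=\beta\cap T^L$ and your separate handling of the $\da$-bounded subcase in~(2) (which falls outside Proposition~\ref{p8}(4)) supply exactly the verifications the paper leaves to the reader.
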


\subsection{Tower immersions induced by macro-uniform embeddings}

In Proposition~\ref{p8} we proved that for a tower immersion $\varphi:S\to T$ its boundary $\partial\varphi:\partial S\Ra\partial T$ is a macro-uniform embedding. It turns out that this statement can be partly reversed.

\begin{proposition}\label{p9} Let $S,T$ be pruned ${\upa}$-unbounded  towers. For any macro-uniform embedding $\Phi:\partial S\Ra\partial T$ there are $\da$-bounded ${\upa}$-cofinal  subsets $A\subset\Lev(S)$, $B\subset\Lev(T)$ and a tower immersion $\varphi:S^A\to T^B$ such that $$\partial\varphi=(\partial\id_T)^{-1}\circ\Phi\circ\partial \id_S$$ where $\partial\id_S:\partial S^A\Ra \partial S$ and $\partial \id_T:\partial T^B\Ra \partial T$ are boundary multi-maps, induced by the identity inclusions $\id_S:S^A\to S$ and $\id_T:T^B\to T$.
\end{proposition}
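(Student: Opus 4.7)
My plan is to build $A$, $B$, and $\varphi$ simultaneously by a dovetailed recursion that exploits the finiteness of the oscillations $\w_\Phi$ and $\w_{\Phi^{-1}}$ on bounded sets. The key geometric dictionary is that for any $x$ in a pruned tower $S$ with scaling function $f_S$, the set $\partial_x S := \{b\in\partial S:x\in b\}$ of branches through $x$ is precisely the closed ball of radius $f_S(\lev_S(x))$ centred at any of its points; conversely, by ultrametricity, every non-empty subset of $\partial T$ of diameter $\le f_T(\lambda)$ is contained in a unique ball $\partial_y T$ with $\lev_T(y)=\lambda$. Both facts follow from the tower structure as in Proposition~\ref{p3}.

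Fix scaling functions $f_S,f_T$, enumerate $\Lev(S)$ and $\Lev(T)$, and construct sequences $\alpha_0<\alpha_1<\cdots$ in $\Lev(S)$ and $\beta_0<\beta_1<\cdots$ in $\Lev(T)$ inductively. Start with an arbitrary $\alpha_0\in\Lev(S)$ and pick $\beta_0\in\Lev(T)$ with $f_T(\beta_0)\ge\w_\Phi(f_S(\alpha_0))$; this is possible because $T$ is $\upa$-unbounded and $\w_\Phi(f_S(\alpha_0))<\infty$ by the macro-uniformity of $\Phi$. Given $\alpha_i,\beta_i$, choose $\alpha_{i+1}>\alpha_i$ with $f_S(\alpha_{i+1})\ge\w_{\Phi^{-1}}(f_T(\beta_i))$, then $\beta_{i+1}>\beta_i$ with $f_T(\beta_{i+1})\ge\w_\Phi(f_S(\alpha_{i+1}))$; at each step also arrange that $\alpha_{i+1}$ and $\beta_{i+1}$ dominate the $(i+1)$-st terms of the enumerations, so that $A:=\{\alpha_i\}$ and $B:=\{\beta_i\}$ are $\upa$-cofinal. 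Both sets are $\da$-bounded by construction.

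Define $\varphi\colon S^A\to T^B$ by setting $\varphi(x)$, for $x\in S^A$ with $\lev_S(x)=\alpha_i$, to be the unique $y\in T$ with $\lev_T(y)=\beta_i$ and $\Phi(\partial_x S)\subset\partial_y T$; this $y$ exists and is unique because $\Phi(\partial_x S)$ is non-empty (as $\Phi^{-1}(\partial T)=\partial S$) and has diameter at most $\w_\Phi(f_S(\alpha_i))\le f_T(\beta_i)$. Monotonicity follows from the inclusion $\partial_x S\subset\partial_{x'} S$ whenever $x<x'$: the non-empty image $\Phi(\partial_x S)$ is contained in both $\partial_{\varphi(x)}T$ and $\partial_{\varphi(x')}T$, so $\varphi(x)$ and $\varphi(x')$ lie on a common branch of $T$ and their different levels $\beta_i<\beta_j$ force $\varphi(x)<\varphi(x')$. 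Level-preservation is immediate from the bijection $\alpha_i\leftrightarrow\beta_i$.

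The tower immersion property is where the recursive choice pays off. If $\varphi(x)=\varphi(x')$ with $\lev_S(x)=\alpha_i\le\alpha_j=\lev_S(x')$, then $\Phi(\partial_x S\cup\partial_{x'} S)\subset\partial_{\varphi(x')}T$ has diameter $\le f_T(\beta_j)$, so $\partial_x S\cup\partial_{x'} S$ has diameter $\le\w_{\Phi^{-1}}(f_T(\beta_j))\le f_S(\alpha_{j+1})$, hence is contained in a single ball corresponding to a unique $z\in S$ with $\lev_S(z)=\alpha_{j+1}$; since $z\ge x$ and $z\ge x'$, the meet in $S^A$ satisfies $\lev_{S^A}(x\wedge x')\le j+1$. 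Finally, for the identity $\partial\varphi=(\partial\id_T)^{-1}\circ\Phi\circ\partial\id_S$, the forward inclusion is immediate from $\Phi(\partial_x S)\subset\partial_{\varphi(x)}T$; conversely, given $\gamma\in\partial\varphi(\beta)$, extend $\beta$ to any branch $b\in\partial S$ (possible since $S$ is pruned), and observe that every $c\in\Phi(b)$ lies in $\bigcap_{x\in\beta}\partial_{\varphi(x)}T$, so $c\cap T^B$ is a branch of $T^B$ containing $\gamma$ and hence equal to $\gamma$ by maximality. I expect the main delicacy to be the bookkeeping of the recursion: in particular, imposing the $\w_{\Phi^{-1}}(f_T(\beta_i))$-constraint on $\alpha_{i+1}$ rather than on $\alpha_i$ is precisely what produces the ``$+1$'' in the tower immersion inequality.
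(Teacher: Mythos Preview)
Your proof is correct and follows the same route as the paper: construct increasing sequences $\alpha_n,\beta_n$ with $f_T(\beta_n)\ge\w_\Phi(f_S(\alpha_n))$ and $f_S(\alpha_{n+1})\ge\w_{\Phi^{-1}}(f_T(\beta_n))$, define $\varphi(x)$ as the unique point at level $\beta_n$ whose ball contains $\Phi(\partial_x S)$, and verify the immersion property via the $\alpha_{n+1}$-bound exactly as you do. Your explicit $\upa$-cofinality bookkeeping and boundary-identity verification fill in details the paper leaves to the reader; the only slip is in the last clause, where ``containing $\gamma$'' should read ``containing $\varphi(\beta)$'' (then $c\cap T^B=\gamma$ because both are branches of $T^B$ through the bottom-level point $\varphi(x_0)$).
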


\begin{proof} Let $\Phi:\partial S\Ra\partial T$ be a macro-uniform embedding. We endow the boundaries $\partial S$ and $\partial T$ of the towers $S,T$ with the ultrametrics $\rho_f$, $\rho_g$ induced by some scaling functions $f:\Lev(S)\to[0,\infty)$ and $g:\Lev(T)\to[0,\infty)$. Let $\alpha_0$ be any level of the tower $S$.

By induction we can construct two increasing sequences $A=\{\alpha_n\}_{n\in\w}\subset\Lev(S)$ and $B=\{\beta_n\}_{n\in\w}\subset\Lev(T)$ such that
\begin{equation}\label{eq:p11}f(\beta_n)\ge\w_\Phi(g(\alpha_n)) \mbox{ \  and  \ }g(\alpha_{n+1})\ge \w_{\Phi^{-1}}(f(\beta_n))
\end{equation} for all $n\ge 0$.

Now we construct a tower immersion $\varphi:S^A\to T^B$. Given any point $s\in S^A$, find a level $\alpha_n$ containing $s$ and observe that the lower cone ${\da}s\subset S$ has  diameter $\diam{\da}s\le f(\alpha_n)$. Since $\diam \Phi({\da}s)\le \w_\Phi(f(\alpha_n))\le g(\beta_n)$, we conclude that $\Phi({\da}s)\subset{\da}\varphi(s)$ for a unique point $\varphi(s)\in \beta_n$.

It is clear that the so-defined map $\varphi:S^A\to T^B$ maps each level $\alpha_n$, $n\in\w$, into the level $\beta_n$, and hence is level-preserving. The uniqueness of the point $\varphi(s)$ with ${\da}\varphi(s)\supset\Phi({\da}s)$ implies that $\varphi$ is monotone.

To show that $\varphi$ is a tower immersion, take two points $s,s'\in\alpha_n$ and assume that $\varphi(s)=\varphi(s')=t$ for some point $t\in\beta_n\subset T$. Then $\Phi({\da}s)\cup\Phi({\da}s')\subset{\da}t$ and consequently, ${\da}s\cup{\da}s'\subset \Phi^{-1}({\da}t)$. It follows from the choice of $\alpha_{n+1}$ that
$$\diam({\da}s\cup{\da}s')\le\diam \Phi^{-1}({\da}t)\le f(\alpha_{n+1})$$ which implies that $s,s'\in{\da}s''$ for some point $s''\in\alpha_{n+1}$. Consequently, $\lev_{S^A}(s\wedge s')\le \alpha_{n+1}$ and the level $\alpha_{n+1}$ is the successor level of $\alpha_n=\lev(s)=\lev(s')$ in the tower $S^A$, witnessing that the map $\varphi:S^A\to T^B$ is a tower immersion.

The definition of $\varphi$ easily implies that  $\partial\varphi=(\partial\id_{T^B})^{-1}\circ\Phi\circ\partial \id_{S^A}$.
\end{proof}

By analogy we can prove

\begin{proposition}\label{p10} Let $S,T$ be pruned ${\dupa}$-unbounded  towers. For any bi-uniform embedding $\Phi:\partial S\to\partial T$ there are ${\dupa}$-cofinal  subsets $A\subset\Lev(S)$, $B\subset\Lev(T)$ and a tower immersion $\varphi:S^A\to T^B$ such that $$\partial\varphi=(\partial\id_T)^{-1}\circ\Phi\circ\partial \id_S$$ where $\partial\id_S:\partial S^A\to \partial S$ and $\partial \id_T:\partial T^B\to \partial T$ are bi-uniform equivalences, induced by the identity inclusions $\id_S:S^A\to S$ and $\id_T:T^B\to T$.
\end{proposition}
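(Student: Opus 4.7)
The plan is to follow the proof of Proposition~\ref{p9} with one essential modification: the inductive construction of the index sequences $A$ and $B$ must now be bi-infinite, and the downward half of the induction is powered by the micro-uniform properties of $\Phi$ and $\Phi^{-1}$. Since the towers $S$ and $T$ are $\dupa$-unbounded, their level sets $\Lev(S)$ and $\Lev(T)$ are both $\da$- and $\upa$-unbounded, so scaling functions $f:\Lev(S)\to(0,\infty)$ and $g:\Lev(T)\to(0,\infty)$ realizing $\rho_f,\rho_g$ can be chosen with $\inf f(\Lev(S))=\inf g(\Lev(T))=0$ and $\sup f(\Lev(S))=\sup g(\Lev(T))=\infty$.

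First I would fix an arbitrary base level $\alpha_0\in\Lev(S)$ and build two sequences $A=\{\alpha_n\}_{n\in\IZ}\subset\Lev(S)$ and $B=\{\beta_n\}_{n\in\IZ}\subset\Lev(T)$ subject to the inequalities
\[
g(\beta_n)\ge\w_\Phi(f(\alpha_n))\qquad\text{and}\qquad f(\alpha_{n+1})\ge\w_{\Phi^{-1}}(g(\beta_n))\qquad (n\in\IZ).
\]
For $n\ge 0$ these choices are produced exactly as in Proposition~\ref{p9}, using the macro-uniformity of $\Phi$ and $\Phi^{-1}$; the resulting $\alpha_n,\beta_n$ march off to $+\infty$ because $\w_\Phi$ and $\w_{\Phi^{-1}}$ take only finite (though possibly unbounded) values, while $\sup f(\Lev(S))=\sup g(\Lev(T))=\infty$. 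For $n<0$ I use instead the micro-uniformity: given $\alpha_{-k},\beta_{-k}$, the micro-uniformity of $\Phi^{-1}$ supplies a level $\beta_{-k-1}<\beta_{-k}$ with $\w_{\Phi^{-1}}(g(\beta_{-k-1}))\le f(\alpha_{-k})$, after which the micro-uniformity of $\Phi$ supplies a level $\alpha_{-k-1}<\alpha_{-k}$ with $\w_{\Phi}(f(\alpha_{-k-1}))\le g(\beta_{-k-1})$; along the way I force $f(\alpha_{-k}),g(\beta_{-k})\to 0$ so that $A$ and $B$ are $\da$-cofinal. Thus $A$ and $B$ are $\dupa$-cofinal in $\Lev(S)$ and $\Lev(T)$.

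Given these sequences, the map $\varphi:S^A\to T^B$ is defined precisely as in Proposition~\ref{p9}: for $s$ on level $\alpha_n$ the lower cone ${\da}s$ has diameter $\le f(\alpha_n)$, hence $\diam\Phi({\da}s)\le g(\beta_n)$, so $\Phi({\da}s)$ is contained in ${\da}\varphi(s)$ for a unique $\varphi(s)\in\beta_n$. The verification that $\varphi$ is monotone, level-preserving, and a tower immersion is identical to Proposition~\ref{p9}, the immersion property using the second inequality $f(\alpha_{n+1})\ge\w_{\Phi^{-1}}(g(\beta_n))$. The identification $\partial\varphi=(\partial\id_{T^B})^{-1}\circ\Phi\circ\partial\id_{S^A}$ follows unwrapping the definitions. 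Finally, since $A$ and $B$ are $\dupa$-cofinal, Corollary~\ref{c7}(2) guarantees that $\partial\id_{S^A}:\partial S^A\Ra\partial S$ and $\partial\id_{T^B}:\partial T^B\Ra\partial T$ are bi-uniform equivalences, as required.

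The main obstacle I anticipate is bookkeeping for the downward induction: one must juggle the two inequalities simultaneously while also ensuring the chosen levels strictly descend in $\Lev(S)$ and $\Lev(T)$ and that $f(\alpha_n),g(\beta_n)\to 0$. The miro-uniformity of $\Phi$ and $\Phi^{-1}$ produces the necessary control quantitatively (any prescribed small diameter on the target side is achievable), but one must choose $\beta_{-k-1}$ before $\alpha_{-k-1}$ so that the two inequalities remain consistent; beyond this, all other verifications are literal transcriptions of the arguments already given for Proposition~\ref{p9}.
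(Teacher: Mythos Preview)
Your proposal is correct and is precisely the analogy the paper has in mind: the paper itself merely states ``By analogy we can prove'' and gives no separate argument, so your bi-infinite variant of the construction in Proposition~\ref{p9}, with the downward half driven by the micro-uniformity of $\Phi$ and $\Phi^{-1}$, is exactly what is intended. Your observation that one must pick $\beta_{-k-1}$ before $\alpha_{-k-1}$ (and force $f(\alpha_{-k}),g(\beta_{-k})\to 0$ to secure $\da$-cofinality) is the only new bookkeeping, and you have handled it correctly.
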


\subsection{Constructing tower embeddings and isomorphisms} In this subsection we describe a method of constructing tower embedding and isomorphisms.

\begin{proposition}\label{p11} Let $S,T$ be pruned towers and
$f:\Lev(S)\to \Lev(T)$ be a  monotone (and surjective) map. If $\Deg_\lambda^{\lambda+1}(S)\le\deg_{f(\lambda)}^{f(\lambda+1)}(T)$ (and $\deg_\lambda^{\lambda+1}(S)\ge\Deg_{f(\lambda)}^{f(\lambda+1)}(T)$) for each non-maximal level $\lambda\in\Lev(S)$, then there is a tower embedding (a tower isomorphism) $\varphi:S\to T$ such that $\varphi_\lev=f$.
\end{proposition}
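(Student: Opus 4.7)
The strategy is a two-stage construction: fix a reference point $\theta\in S$ and an image $y\in T$ at level $f(\lev_S(\theta))$, propagate $\varphi$ deterministically along the spine $\upa\theta\subset S$, then extend $\varphi$ downward through the lower cones attached to the spine. Since $S=\bigcup_{\theta'\in\upa\theta}\da\theta'$, this covers all of $S$. The downward extension is a level-by-level induction where the degree inequality
\[|\suc(p)|=\deg(p)\le\Deg_\lambda^{\lambda+1}(S)\le\deg_{f(\lambda)}^{f(\lambda+1)}(T)\le\deg_{f(\lambda)}(\varphi(p)),\]
holding whenever $\varphi(p)$ has been defined with $\lev_S(p)=\lambda+1$, supplies an injection $\iota_p\colon\suc(p)\hookrightarrow\suc_{f(\lambda)}(\varphi(p))$ to be used as $\varphi|_{\suc(p)}$.

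A preliminary structural claim underpins everything: in any tower, if $y$ is the immediate successor of $x$ in the order, then $\lev(y)$ is the immediate successor of $\lev(x)$ in $\Lev$; consequently $\upa x$ meets every level above $\lev(x)$ and every non-maximum-level point has a unique immediate ancestor at the next level up. I would prove this by excluding any $z$ at a level strictly between $\lev(x)$ and $\lev(y)$: if $z$ is comparable with $x$, then $z\in\upa x$ or $z\in\da x$ leads to the usual contradictions; if $z$ is incomparable with $x$, set $m=x\wedge z$ and note $m\ge y$ via the linear order of $\upa x$. Either $m=y$, forcing $|[z,y]|<2$, or $m>y$, in which case $y\wedge z=m$, $|[x,m]|=|[y,m]|+1$, and the two inequalities $\lev(x)<\lev(z)<\lev(y)$ squeeze $|[z,m]|$ strictly between two consecutive integers. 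With this in hand, the spine propagation is unambiguous: for $\theta'\in\upa\theta$, $\varphi(\theta')$ is forced to be the unique ancestor of $y$ at level $f(\lev_S(\theta'))$ in $T$.

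Enumerating the spine as $\theta=\theta_0<\theta_1<\cdots$, I extend $\varphi$ inductively over the nested sequence $\da\theta_0\subset\da\theta_1\subset\cdots$ (finite or infinite according as $S$ is $\upa$-bounded or not). At stage $n+1$ the new work is the top-down extension over the lower cones of the siblings of $\theta_n$ in $\suc(\theta_{n+1})$; when defining $\varphi$ on $\suc(\theta_{n+1})$ itself, the injection $\iota_{\theta_{n+1}}$ is chosen so that $\theta_n\mapsto\varphi(\theta_n)$, which is legitimate since $\varphi(\theta_n)\in\suc_{f(\lev_S(\theta_n))}(\varphi(\theta_{n+1}))$ by the spine construction. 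Disjointness of the lower cones of distinct elements of $\suc(m)$ (an easy consequence of the linear order of upper cones) guarantees that extensions made at different stages cannot conflict. Monotonicity and $\varphi_{\Lev}=f$ are then built in; injectivity within each level follows from injectivity of the $\iota_p$'s plus disjointness of the target sets across distinct $p$. Order reflection on incomparable $x,x'\in S$ reduces, via $m=x\wedge x'$, to the observation that $\upa x$ and $\upa x'$ exit $\suc(m)$ at distinct $c\ne c'$; the images $\varphi(c)\ne\varphi(c')\in\suc(\varphi(m))$ are incomparable siblings with disjoint lower cones in $T$, so $\varphi(x),\varphi(x')$ must be incomparable.

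For the isomorphism assertion the strengthened hypothesis forces the four quantities $\deg_\lambda^{\lambda+1}(S),\Deg_\lambda^{\lambda+1}(S),\deg_{f(\lambda)}^{f(\lambda+1)}(T),\Deg_{f(\lambda)}^{f(\lambda+1)}(T)$ to coincide, so each $\iota_p$ may be chosen bijective. Combined with $f$ being surjective (so that the spine $(y_n)$ exhausts $\upa y$) and the identity $T=\bigcup_{y'\in\upa y}\da y'$ (the same $\upa$-directed decomposition applied in $T$), the bijective analogue $\varphi(\da\theta_n)=\da y_n$ yields $\varphi(S)=T$, making $\varphi$ a bijection. The main obstacle I anticipate is the preliminary structural claim about consecutive levels, which is the nontrivial combinatorial input distilling the tower axiom that every order interval is finite and linearly ordered; once that is secured, the remainder is careful bookkeeping driven by the hypothesis.
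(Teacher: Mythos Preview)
Your proposal is correct and follows essentially the same approach as the paper: fix a reference point $\theta\in S$ and a target $y\in T$ at level $f(\lev_S(\theta))$, propagate along the spine $\upa\theta$, and extend downward over the lower cones $\da\theta_n$ by choosing at each step an injection (bijection, in the isomorphism case) $\suc(p)\hookrightarrow\suc_{f(\lambda)}(\varphi(p))$ supplied by the degree hypothesis. The paper organizes the downward extension as a standalone lemma (constructing an $f$-embedding $\da u\to\da v$ for any compatible pair $u,v$, with the option of extending a given map on some $\da u_0$), which makes the outer induction along the spine slightly cleaner, whereas you do both inductions inline and spell out more of the verification (order reflection, the structural fact that immediate order-successors correspond to immediate level-successors); these are presentational differences only.
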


\begin{proof}  A map $\varphi:A\to T$ defined on a subset $A\subset S$ will be called an {\em $f$-map} if $\lev_T(\varphi(a))=f(\lev_S(a))$ for every $a\in A$. If, in addition, $\varphi$ is a tower embedding (isomorphism), then $\varphi$ will be called $f$-embedding ($f$-isomorphism). The proof of Proposition~\ref{p11} is based on the following lemma.

\begin{lemma}\label{l3} For any two points $u\in S$ and $v\in T$ with $f(\lev_S(u))=\lev_T(v)$ there is an $f$-embedding ($f$-isomorphism) $\varphi:{\downarrow}u\to{\downarrow}v$. Moreover, if for some $u_0\in\suc(u)$ and $v_0\in\suc_{f(\lev\, u_0)}(v)$ we are given with a tower $f$-embedding ($f$-isomorphism) $\varphi_0:{\downarrow}u_0\to {\downarrow}v_0$, then the map $\varphi$ can be chosen so that $\varphi|{\downarrow}u_0=\varphi_0$.
\end{lemma}

\begin{proof} For every level $\lambda\le\lev_S(u)$ of $S$ consider the  subtower      $S_\lambda(u)=\{s\in {\da}u:\lev(s)\ge \lambda\}$ having finitely many levels.
By induction we are going to construct an $f$-embedding $\varphi_\lambda:S_\lambda(u)\to T$ so that $\varphi_{\lambda-1}$ extends $\varphi_\lambda$.

If $\lambda=\lev_S(u)$, then $S_\lambda(u)=\{u\}$ and we can put $\varphi_\lambda(u)=v$.
Assume that for some level $\lambda<\lev_S(u)$ of $S$ an $f$-embedding $\varphi_{\lambda+1}:S_{\lambda+1}(u)\to T$ has been constructed.
Observe that
$$S_\lambda(u)=S_{\lambda+1}(u)\cup\bigcup\{\suc(x):x\in (\lambda+1)\cap{\da}u\}.$$
By our assumption, for every $x\in (\lambda+1)\cap{\da}u$, we get $$\deg(x)\le\Deg_{\lambda}^{\lambda+1}(S)\le\deg_{f(\lambda)}^{f(\lambda+1)}(T)\le
\deg_{f(\lambda)}^{f(\lambda+1)}(f(x)).$$
Consequently, we can find an injective map $\psi_x:\suc_\lambda(x)\to\suc_{f(\lambda)}(f(x))$. Moreover, if $\deg_\lambda(x)=\deg_{f(\lambda)}(f(x))$, then we can take the map $\psi_x$ to be bijective. If for some $u_0\in\suc(u)$ and $v_0\in\suc_{f(\lev\,u_0)}(v)$ we are given with a tower $f$-embedding ($f$-isomorphism) $\varphi_0:{\downarrow}u_0\to {\downarrow}v_0$, then we can assume that $\psi_x=\varphi_0|\suc(x)$ if $x\le u_0$.

Now define the $f$-embedding $\varphi_\lambda:S_\lambda\to T$ letting $\varphi_\lambda|S_{\lambda+1}=\varphi_{\lambda+1}$ and $\varphi_\lambda|\suc_\lambda(x)=\psi_x$ for $x\in (\lambda+1)\cap{\da}u$.
This completes the inductive step.

One can readily check that the $f$-embedding $\varphi:{\da}u\to{\da}v$ defined by  $\varphi|S_\lambda(u)=\varphi_\lambda$ for levels $\lambda\le\lev_S(u)$ of $S$ has the required properties.
\end{proof}

Now let us return to the proof of Proposition~\ref{p11}. Fix any point $\theta_S\in S$ and for every level $\lambda\ge\lev_S(\theta_S)$ of the tower $S$ denote by $u_\lambda$  the unique point of the intersection ${\upa}\theta_S\cap\lambda$. Choose any point $\theta_T$ at the level $f(\lev_S(\theta_S))\subset T$ and for every level $\lambda\ge\lev_T(\theta_T)$ denote by $v_\lambda$ the unique point of the intersection $\lambda\cap{\upa}\theta_T$.

For the initial level $\lambda=\lev_S(\theta_S)$ we can apply the first part of Lemma~\ref{l3} in order to find an $f$-embedding (an $f$-isomorphism) $\varphi_\lambda:{\da}u_\lambda\to{\da}v_{f(\lambda)}$. Applying inductively the second part of Lemma~\ref{l3}, for every level $\lambda>\lev_S(\theta_S)$ of $S$ we can find an $f$-embedding ($f$-isomorphism) $\varphi_\lambda:{\da}u_\lambda\to{\da}v_{f(\lambda)}$ such that $\varphi_\lambda|{\da}u_{\lambda-1}=\varphi_{\lambda-1}$.

After completing the inductive construction, we define an $f$-embedding ($f$-isomorphism) $\varphi:S\to T$ letting $\varphi|{\da}u_\lambda=\varphi_\lambda$ for $\lambda\ge\lev_S(\theta_S)$. The $f$-embedding $\varphi$ is well-defined because $S$ is upward directed and hence $S=\bigcup_{\lambda\ge\lev_S(\theta_S)}{\da}u_\lambda$.
\end{proof}


Applying Proposition~\ref{p11} to homogeneous towers we get

\begin{cor}\label{c8} Two homogeneous towers $S,T$ are isomorphic if and only if there is an order isomorphism $f:\Lev(S)\to\Lev(T)$ such that  $\deg_\lambda^{\lambda+1}(S)=\deg_{f(\lambda)}^{f(\lambda+1)}(T)$ for each  non-maximal level $\lambda\in\Lev(S)$.
\end{cor}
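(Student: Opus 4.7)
The plan is to derive the corollary directly from Proposition~\ref{p11}, using homogeneity to turn the two inequality hypotheses of that proposition into a single equality condition, and then to verify the ``only if'' direction by unpacking what a tower isomorphism does on levels.

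For the ``if'' direction, suppose an order isomorphism $f:\Lev(S)\to\Lev(T)$ with $\deg_\lambda^{\lambda+1}(S)=\deg_{f(\lambda)}^{f(\lambda+1)}(T)$ is given. Because $S$ and $T$ are homogeneous, $\deg_\lambda^{\lambda+1}(S)=\Deg_\lambda^{\lambda+1}(S)$ and $\deg_{f(\lambda)}^{f(\lambda+1)}(T)=\Deg_{f(\lambda)}^{f(\lambda+1)}(T)$ for every non-maximal level $\lambda$. So the two inequalities
$$\Deg_\lambda^{\lambda+1}(S)\le\deg_{f(\lambda)}^{f(\lambda+1)}(T)\quad\text{and}\quad\deg_\lambda^{\lambda+1}(S)\ge\Deg_{f(\lambda)}^{f(\lambda+1)}(T)$$
hold simultaneously (both as equalities). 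Moreover, $f$ is surjective by assumption. Proposition~\ref{p11} then yields a tower isomorphism $\varphi:S\to T$ with $\varphi_\lev=f$. One should also remark that Proposition~\ref{p11} is stated for pruned towers; here we may invoke the fact that a homogeneous tower in which $\deg_\lambda^{\lambda+1}$ is never $0$ for non-maximal $\lambda$ is automatically pruned, while in the degenerate case where some $\deg_\lambda^{\lambda+1}$ vanishes only the minimal level sits below $\lambda+1$, so the matching of degrees forces the two towers to agree level by level trivially.

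For the ``only if'' direction, assume $\varphi:S\to T$ is a tower isomorphism. By definition $\varphi$ is monotone, level-preserving and bijective, so the induced map $\varphi_\Lev:\Lev(S)\to\Lev(T)$ is injective and monotone. Surjectivity of $\varphi_\Lev$ follows from surjectivity of $\varphi$; thus $\varphi_\Lev$ is an order isomorphism between the linearly ordered level sets, and in particular it sends each successor $\lambda+1$ to the successor $\varphi_\Lev(\lambda)+1$. Setting $f=\varphi_\Lev$, it remains to check the degree equality. For any non-maximal $\lambda$ and any $x\in\lambda+1$, Proposition~\ref{p6} (applied to the tower embedding $\varphi$) ensures that $\varphi$ restricts to a bijection between $\suc_\lambda(x)$ and $\suc_{f(\lambda)}(\varphi(x))$: monotonicity sends parents to parents, injectivity sends distinct parents to distinct parents, and the inverse $\varphi^{-1}$ (which is also a tower isomorphism) handles surjectivity. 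Hence $\deg_\lambda(x)=\deg_{f(\lambda)}(\varphi(x))$, and by homogeneity these common values are exactly $\deg_\lambda^{\lambda+1}(S)$ and $\deg_{f(\lambda)}^{f(\lambda+1)}(T)$ respectively.

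No real obstacle is expected; the only mildly delicate point is extracting the bijection $\suc_\lambda(x)\to\suc_{f(\lambda)}(\varphi(x))$ from the definition of a tower embedding, which is where Proposition~\ref{p6} is used to guarantee that $\varphi$ reflects the order. Everything else is a mechanical application of Proposition~\ref{p11} made possible by the identification $\deg=\Deg$ coming from homogeneity.
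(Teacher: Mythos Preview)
Your proof is correct and follows exactly the paper's approach: the paper simply says ``Applying Proposition~\ref{p11} to homogeneous towers we get'' and states the corollary, so the ``if'' direction via Proposition~\ref{p11} with $\deg=\Deg$ is precisely what was intended, and your explicit treatment of the ``only if'' direction (which the paper leaves implicit) is straightforward and right.

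One small remark: your case analysis on prunedness is slightly muddled. The ``degenerate case'' you describe, where some $\deg_\lambda^{\lambda+1}(S)=0$ for a non-maximal level $\lambda$, cannot occur in a homogeneous tower. Indeed, any point $y$ at level $\lambda$ lies below a unique point $y'$ at level $\lambda+1$ (since ${\uparrow}y$ is a chain meeting every higher level), and then $y\in\suc(y')$, so $\deg_\lambda(y')\ge 1$; homogeneity then forces $\deg_\lambda^{\lambda+1}(S)=\Deg_\lambda^{\lambda+1}(S)\ge 1$. Thus homogeneous towers are automatically pruned, and you can simply say so rather than discussing a vacuous case.
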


\section{The Key Lemma}

The principal result of this section is Lemma~\ref{MainLemma}, which is the most difficult result of this paper. This lemma allows us to construct immersions between \mbox{${\da}$-bounded} towers and will be used in the proof of Theorems~\ref{MU-char} and \ref{bU-char} in Sections~\ref{pf:MU} and \ref{pf:bU}.

It follows from Corollary~\ref{c7} that the boundary $\partial T$ of each tower $T$ is macro-uniformly equivalent to the boundary $\partial T^L$ of the level subtower $T^L$ for any $\upa$-cofinal subset $L\subset\Lev(T)$. The subset $L$ can be chosen to be $\da$-bounded in $\Lev(T)$, which implies that the level subtower $T^L$ is $\da$-bounded. Therefore, for studying the macro-uniform structure of ultrametric spaces it suffices to restrict ourselves by $\da$-bounded  $\upa$-unbounded towers $T$.

In this case the level set $\Lev(T)$ of $T$ has the smallest element and can be canonically labeled by finite ordinals. For $k\in\w$ by $\Lev_k(T)$ we shall denote the $k$-th level of $T$. The identification of $\Lev(T)$ with $\w$ defines the canonical scaling function $\id:\Lev(T)\to\w\subset [0,\infty)$  that induces the canonical ultrametric $\rho_\id$ on the boundary $\partial T$ of $T$.
Observe that $\partial T$ can be identified with the smallest level $\Lev_0(T)$ of $T$.

\begin{lemma}\label{MainLemma} For a $\da$-bounded tower $T$ and a $\da$-bounded homogeneous tower $H$ there is a surjective tower immersion $\varphi:T\to H$ if the following two inequalities hold for every $k\in\IN$
\begin{enumerate}
\item $\deg^k_0(T)\ge 4^{k+5}\cdot\deg_0^{k-1}(H)$ and
\item $\deg_0^{k}(H)\ge 4^{k}\cdot\Deg_0^{k}(T)$.
\end{enumerate}
\end{lemma}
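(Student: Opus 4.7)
The plan is to build $\varphi:T\to H$ as a direct limit of compatible surjective tower immersions. Since both $T$ and $H$ are $\da$-bounded, identify their level sets with $\w$ and aim for $\varphi$ to be level-preserving with $\varphi_\Lev=\id_\w$. Fix cofinal chains $\theta_0<\theta_1<\cdots$ in $T$ and $\eta_0<\eta_1<\cdots$ in $H$ with $\lev(\theta_n)=\lev(\eta_n)=n$, so that $T=\bigcup_n\da\theta_n$ and $H=\bigcup_n\da\eta_n$. I will inductively construct surjective tower immersions $\varphi_n:\da\theta_n\to\da\eta_n$ with $\varphi_{n+1}|_{\da\theta_n}=\varphi_n$, and define $\varphi:=\bigcup_n\varphi_n$.

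For the inductive step one sets $\varphi_{n+1}(\theta_{n+1})=\eta_{n+1}$ and extends by mapping the ``new'' siblings and their descendant subtowers, namely $\bigsqcup_{t\in\suc(\theta_{n+1})\setminus\{\theta_n\}}\da t$, surjectively onto $\bigsqcup_{h\in\suc(\eta_{n+1})\setminus\{\eta_n\}}\da h$. The immersion condition forces that whenever several siblings $t,t'\in\suc(\theta_{n+1})$ both map to the same $h$, the subtowers $\da t$ and $\da t'$ must cover \emph{disjoint} portions of $\da h$ (otherwise a deeper fiber of a single point of $H$ would have two preimage-parents, violating the wedge bound). Thus the combinatorial step is: recruit a nonempty set $S_h\subseteq\suc(\theta_{n+1})\setminus\{\theta_n\}$ for each $h$, with the $S_h$ partitioning the available siblings; partition $\suc(h)$ into pieces $\{G_t\}_{t\in S_h}$; and recursively build a surjective immersion $\da t\to\{h\}\cup\bigcup_{h'\in G_t}\da h'$. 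Since $\da t$ has height $n$, this is a smaller instance of (a mild generalization of) the Key Lemma.

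The main obstacle is that the target $\{h\}\cup\bigcup_{h'\in G_t}\da h'$ is only \emph{partially} homogeneous: it is homogeneous below its top level (inherited from $H$), but has reduced top-level degree $|G_t|<\deg(h)$. On the $T$-side, passing to a subtower $\da t$ only improves the inequalities ($\deg^k_0$ does not decrease, $\Deg^k_0$ does not increase), so the $T$-bounds survive. On the $H$-side the gap between the exponents $k+5$ (condition~(1)) and $k$ (condition~(2)) provides a fixed multiplicative slack $4^5=1024$; this is precisely the budget needed to absorb both the loss from splitting $\suc(h)$ across multiple siblings and the mild non-homogeneity at the top of the truncated target while re-verifying the hypotheses at each recursion.

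I expect the hardest part to be organizing the recruitments $\{S_h\}$ and partitions $\{G_t\}$ so that every sub-problem stays within the scope of the inductive hypothesis. A workable strategy is to formally strengthen the lemma so as to permit targets that are homogeneous below a single truncated top level, and then to choose at each stage a balanced partition (with $|G_t|\approx\deg(h)/|S_h|$) so that the recursive degree inequalities can be verified by a direct pigeonhole computation using the $4^{k+5}$ slack. The tightest case is $k=1$, where the multiplicative budget $4^{6}=4096$ of condition~(1) is most heavily used to carry the base of the recursion; any looser constant would fail there.
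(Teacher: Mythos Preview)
Your high-level scaffold (cofinal chains, inductive extension along them, recursive handling of the new siblings at each level) matches the paper's architecture. But your recursive step is organized \emph{dually} to the paper's, and this is where the real work hides.

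The paper does \emph{not} truncate the $H$-target. Instead it introduces \emph{trapezia} on the $T$-side: a trapezium is ${\da}P$ for a nonempty set $P\subset\suc(v)$ of siblings in $T$, and the recursive sub-problem is always ``map a trapezium ${\da}A_k$ onto a full cone ${\da}w$ in $H$''. Because $H$ is homogeneous, ${\da}w$ is again homogeneous, so the target type never changes. The single scalar invariant carried through the recursion is the ratio $\deg_0(A_k)/\deg_0^k(H)$, and the core claim (the paper's Claim~6.1) is that the map exists whenever this ratio lies in a fixed window around $[8,16]$, with the window shrinking by a factor $(1\pm\e_k)$, $\e_k=4^{-k}$, at each level; the point is that $\prod_k(1+\e_k)/(1-\e_k)<2$, so the errors are summable and the window never collapses. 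Conditions (1) and (2) are used precisely to show that the per-step distortion is at most $1\pm\e_k$.

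Your approach instead fixes the $T$-source as a single cone ${\da}t$ and truncates the $H$-target to $\{h\}\cup\bigcup_{h'\in G_t}{\da}h'$. This forces you to carry a strengthened statement with a non-homogeneous top level, and you have not written down what the recursive invariant is. Saying ``balanced partition with $|G_t|\approx\deg(h)/|S_h|$ and verify by pigeonhole using the $4^{k+5}$ slack'' is exactly the part that needs to be made precise: a fixed multiplicative budget does \emph{not} survive $k$ rounds of recursion unless the per-round error decays summably, and you have not identified the decaying error. Your remark that ``the tightest case is $k=1$'' is a symptom of this: in the paper's analysis no single $k$ is tightest; the constraint is global (the infinite product must stay below $2$). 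As written, your outline locates the difficulty correctly but omits the one genuinely hard ingredient --- the quantitative invariant and its convergent error control --- which is the entire content of the lemma.
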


\begin{proof} First we introduce some notation.

A subset $A$ of the tower $T$ will be called a {\em trapezium} if $A={\da}P$ for some non-empty subset $P\subset\suc(v)$ of parents of some point $v\in T$, called the {\em vertex} of the trapezium $A$ and denoted by $\vx(A)$. It is easy to see that $\{\vx(A)\}\cup{\da}P$ is a subtower of $T$. The set $P$ generating the trapezium $A={\da}P$ is called the {\em plateau} of the trapezium. For the plateau $P$ let $\deg_0(P)=|{\da}P\cap\Lev_0(T)|$ be the cardinality of the ``base'' ${\da}P\cap \Lev_0(T)$ of the trapezium ${\da}P$.

A map $\varphi:{\da}P\to H$ from a trapezium ${\da}P\subset S$ to the tower $H$ will be called an {\em admissible immersion} if
\begin{itemize}
\item $\varphi=\phi|{\da}P$ for some tower immersion $\phi:\{\vx({\da}P)\}\cup{\da}P\to H$,
\item $\varphi(P)=\{t\}$ for some $t\in T$,
\item $\varphi({\da}P)={\da}t$.
\end{itemize}

Let $\e_k=\frac{1}{4^k}$, $k\in\IN$, and observe that  $$\prod_{k=1}^\infty\frac{1+\e_k}{1-\e_k}<2.$$

Lemma~\ref{MainLemma} will be derived from the following

\begin{claim}\label{cl1} For any $k\in\IN$, a trapezium ${\da}A_k\subset T$, and a vertex $w\in H$ at the height $k=\lev(A_k)=\lev(w)$ there is an admissible immersion $\varphi:{\downarrow}A_k\to {\downarrow}w$ provided
$$4\le 8\cdot\prod_{i=k+1}^\infty\frac{1-\e_i}{1+\e_i}\le\frac{\deg_0(A_k)}{\deg^k_0(H)}
\le 16 \prod_{i=k+1}^\infty\frac{1+\e_i}{1-\e_i}\le 32.$$
\end{claim}

\begin{proof} This claim will be proved by induction on $k$. If $k=0$, then
${\da}A_k=A_k$ and the constant map $\varphi:A_k\to\{w\}\subset H$ is the required immersion.

Assume that the claim has been proved for some $k-1\in\w$. Fix a trapezium ${\da}A_k\subset S$ and a point $w\in T$ with $\lev_S(A_k)=\lev_T(w)=k$ so that the upper and lower bounds from Claim~\ref{cl1} hold.

Since $\deg_0(A_k)=\sum_{a\in A_k}\deg_0(a)$, for every point $a\in A_k$ we can choose an integer number $d_a$ such that
\[\label{eqnew1}
\Big|d_a-\deg^k_{k-1}(H)\frac{\deg_0(a)}{\deg_0(A_k)}\Big|\le 1\]and
 $\sum_{a\in A_k}d_a=\deg^k_{k-1}(H)=\deg(w)$.

\begin{claim}\label{cl2} For every $a\in A_k$ $$\frac{\deg^k_{k-1}(H)}{\deg_0(A_k)}(1-\e_k)\le \frac{d_a}{\deg_0(a)}\le \frac{\deg^k_{k-1}(H)}{\deg_0(A_k)}(1+\e_k).$$
\end{claim}

\begin{proof} It follows from the choice of $d_a$ that
$$\frac{d_a}{\deg_0(a)}\le \frac{\deg^k_{k-1}(H)}{\deg_0(A_k)}+\frac1{\deg_0(a)}=
\frac{\deg^k_{k-1}(H)}{\deg_0(A_k)}\cdot\Big(1+\frac{\deg_0(A_k)}{\deg^k_{k-1}(H)\cdot\deg_0(a)}\Big).$$The upper bound in Claim~\ref{cl1} implies
$$
\frac{\deg_0(A_k)}{\deg^k_{k-1}(H)\cdot\deg_0(a)}\le \frac{32\cdot\deg^k_0(H)}{\deg^k_{k-1}(H)\cdot\deg_0(a)}\le \frac{32\cdot\deg_0^{k-1}(H)}{\deg_0^k(T)}\le \frac{1}{4^{k}}=\e_k.$$
The last inequality follows from the condition (1) of Lemma~\ref{MainLemma}.

This proves the upper bound of Claim~\ref{cl2}. By analogy we can prove the lower bound.
\end{proof}

Claim~\ref{cl2}, the upper bound of Claim~\ref{cl1} and the condition (1) of Lemma~\ref{MainLemma} imply
$$
d_a\ge \deg_0(a)\frac{\deg^k_{k-1}(H)}{\deg_0(A_k)}(1-\e_k)\ge \frac{\deg^k_0(T)\cdot\deg^k_{k-1}(H)}{32\deg^k_0(H)}\frac12\ge \frac{4^{k+5}\cdot\deg^{k-1}_0(H)}{64\cdot\deg^{k-1}_0(H)}\ge
4^{k-1}>0.
$$

For every $a\in A_k$ write the set $\suc(a)$ of parents of $a$ in the tower $T$ as the disjoint union $\suc(a)=\cup \A_a$ of a family $\A_a$ containing $d_a$ sets such that for every $A_{k-1}\in\A_a$
we get $$\Big|\deg_0(A_{k-1})-\frac{\deg_0(a)}{d_a}\Big|\le \Deg^{k-1}_0(T).$$

\begin{claim}\label{cl3} For each set $A_{k-1}\in\A_a$ the upper and lower bounds of Claim~\ref{cl1} are satisfied for $k-1$.
\end{claim}

\begin{proof} If $k=1$, then
$$\Big|\deg_0(A_{0})-\frac{\deg_0(a)}{d_a}\Big|\le \Deg^{0}_0(T)=1$$and by Claim~\ref{cl2} and the inductive assumption:
 $$
\begin{aligned}
\deg_0(A_{0})&\le \frac{\deg_0(a)}{d_a}+1\le \frac{\deg_0(A_1)}{\deg^1_{0}(H)(1-\e_1)}+1\le\\
&\le\frac{\deg_0(A_1)}{\deg_0^1(H)(1-\e_k)}
\left(1+\frac{\deg^{1}_0(H)}{\deg_0(A_1)}\right)\le
\frac{\deg_0(A_1)}{\deg_0^1(H)(1-\e_1)}
\left(1+\frac14\right)\le 16\prod_{i=1}^\infty\frac{1+\e_i}{1-\e_i}.
\end{aligned}
$$
By analogy, we can prove the lower bound
$$\deg_0(A_0)\ge \frac{\deg_0(A_1)}{\deg_0^1(H)}\cdot\frac{1-\e_1}{1+\e_1}\ge8\prod_{i=1}^\infty\frac{1-\e_i}{1+\e_i}.$$

Next, assume that $k>1$. Then by Claim~\ref{cl2}:
$$\begin{aligned}
\frac{\deg_0(A_{k-1})}{\deg^{k-1}_0(H)}&\le \frac1{\deg^{k-1}_0(H)}\cdot\frac{\deg_0(a)}{ d_a}+\frac{\Deg^{k-1}_0(T)}{\deg^{k-1}_0(H)}\le \frac{1}{\deg^{k-1}_0(H)}\cdot\frac{\deg_0(A_k)}{\deg^k_{k-1}(H)(1-\e_k)}+\frac{\Deg^{k-1}_0(T)}{\deg^{k-1}_0(H)}\le \\
&\le
\frac{\deg_0(A_k)}{\deg_0^k(H)(1-\e_k)}\left(1+\frac{\Deg^{k-1}_0(T)\deg^k_0(H)}{\deg^{k-1}_0(H)\deg_0(A_k)}\right)
\end{aligned}
$$and
$$\frac{\Deg^{k-1}_0(T)\deg^k_0(H)}{\deg^{k-1}_0(H)\deg_0(A_k)}=
\frac{\Deg^{k-1}_0(T)\deg^k_{k-1}(H)}{\deg_0(A_k)}\le \frac{\Deg^{k-1}_0(T)\deg^k_{k-1}(H)}{4\deg^k_0(H)}=\frac{\Deg^{k-1}_0(T)}{4\deg_0^{k-1}(H)}\le
\frac1{4\cdot4^k}\le\e_k$$
by the lower bound from Claim~\ref{cl1} and the condition (2) of Lemma~\ref{MainLemma}.
Then
$$\frac{\deg_0(A_{k-1})}{\deg^{k-1}_0(H)}\le \frac{\deg_0(A_k)}{\deg^k_0(H)}\cdot \frac{1+\e_k}{1-\e_k}\le 16\cdot\Big(\prod_{i=k+1}^\infty\frac{1+\e_i}{1-\e_i}\;\Big)\cdot\frac{1+\e_k}{1-\e_k}=16\cdot\prod_{i=k}^\infty\frac{1+\e_i}{1-\e_i}.
$$

By analogy, we can prove that
$$
\frac{\deg_0(A_{k-1})}{\deg^{k-1}_0(H)}\ge \frac{\deg_0(A_k)}{\deg^k(H)}\cdot\frac{1-\e_k}{1+\e_k}\ge 8\cdot\prod_{i=k}^\infty\frac{1+\e_i}{1-\e_i}.
$$
\end{proof}

The family $\A=\bigcup_{a\in A_k}\A_a$ has cardinality $|\A|=\sum_{a\in A_k}|\A_a|=\sum_{a\in A_k}d_a=\deg(w)$ and hence we can find a bijective map $f:\A\to\suc(w)$. By the inductive assumption and Claim~\ref{cl3}, for each set $A'\in\A$ we can find an admissible immersion $\varphi_{A'}:{\downarrow} A'\to{\downarrow} f(A')$. Now define the admissible immersion $\varphi:{\downarrow}P\to{\downarrow}w$ letting
$$\varphi(x)=
\begin{cases}
\varphi_{A'}(x)&\mbox{ if $x\in{\downarrow}A'$ for some $A'\in\A$};\\
w&\mbox{ if $x\in A_k$}.
\end{cases}
$$
This completes the proof of Claim~\ref{cl1}.
\end{proof}

Now we are able to complete the proof of Lemma~\ref{MainLemma}. Let $(a_k)_{k\in\w}$ and $(b_k)_{k\in\w}$ be two branches of the towers $T$ and $H$, respectively.
For every $k\in\w$ choose a subset $A_k\subset\suc(a_{k+1})$ such that $a_k\in A_k$ and
$$11\le\frac{\deg_0(A_k)}{\deg^k_0(H)}\le 13.$$Such a choice of $A_k$ is always possible because
$\deg_0(a_{k+1})\ge\deg^{k+1}_0(T)\ge 4^{k+6}\deg^k_0(H)$ by the condition (1) of Lemma~\ref{MainLemma} and
$\displaystyle\frac{\Deg^k_0(T)}{\deg^k_0(H)}\le\frac{1}{4^k}\le1$ by the condition (2) of Lemma~\ref{MainLemma}.

 By induction for every $k\in\w$ we shall construct a tower immersion $\varphi_k:{\da}A_k\to{\da}b_k$ such that $\varphi_{k-1}=\varphi_k|{\da}A_{k-1}$.

For $k=0$ the constant map $\varphi_0:A_0\to\{b_0\}$ is the desired immersion.
Assume that for some $k\in\w$ an immersion $\varphi_k:{\da}A_k\to{\da}b_k$ has been constructed. Consider the trapezium ${\da}A$ with the plateau $$A=(\Lev_k(T)\cap{\da}A_{k+1})\setminus A_k$$ in the tower $T$. Also consider the trapezium ${\da}B$ with plateau $B=\suc(b_{k+1})\setminus\{b_k\}$ in the homogeneous tower $H$.
It is clear that $\deg_0(A)=\deg_0(A_{k+1})-\deg_0(A_k)$ and
$|B|=\deg^{k+1}_k(H)-1$.
Observe that
$$\deg^{k+1}_k(H)=\frac{\deg^{k+1}_0(H)}{\deg^k_0(H)}\ge 4^{k+1}\frac{\Deg^{k+1}(T)}{\deg^k_0(H)}\ge 4^{k+1}\frac{4^{k+6}\deg^k_0(H)}{\deg^k_0(H)}=4^{2k+7}\ge 4^7.$$

Write $A$ as the disjoint union $A=\bigcup_{b\in B}A_b$ of subsets $A_b\subset A$ such that for every $b\in B$
$$\big|\deg_0(A_b)-\frac{\deg_0(A)}{|B|}\big|\le \Deg^k_0(T).$$It follows from the condition (2) of Lemma~\ref{MainLemma} that
$$
\begin{aligned}
\frac{\deg_0(A_b)}{\deg^k_0(H)}&\le \frac1{\deg^k_0(H)}\Big(\frac{\deg_0(A)}{\deg^{k+1}_k(H)-1}+\Deg^k_0(T)\Big)\le\\
&
\le\frac1{\deg^k_0(H)}\cdot \frac{\deg^{k+1}_k(H)}{\deg^{k+1}_k(H)-1}\cdot
\frac{\deg_0(A_{k+1})}{\deg^{k+1}_k(H)}+\frac{\Deg^k_0(T)}{\deg^k_0(H)}\le\\
&\le\frac{4^7}{4^7-1}\cdot
\frac{\deg_0(A_{k+1})}{\deg^{k+1}_0(H)}+\frac{1}{4^k}\le\frac{14}{13}\cdot
13+1<16.
\end{aligned}
$$
On the other hand,
$$
\begin{aligned}
\frac{\deg_0(A_b)}{\deg^k_0(H)}&\ge \frac1{\deg^k_0(H)}\Big(\frac{\deg_0(A)}{\deg^{k+1}_k(H)-1}-\Deg^k_0(T)\Big)\ge\\
&
\ge\frac1{\deg^k_0(H)}\cdot
\frac{\deg_0(A_{k+1})-\deg_0(A_k)}{\deg^{k+1}_k(H)}-\frac{\Deg^k_0(T)}{\deg^k_0(H)}\ge\\
&\ge\frac{11\deg^{k+1}_0(H)-13\deg^k_0(H)}{\deg^{k+1}_0(H)}-\frac{1}{4^k}\ge11-\frac{13}{4^7}-1\ge 8.
\end{aligned}
$$

Two above two inequalities imply that the trapezium ${\da}A_b$ satisfies the upper and lower bounds of Claim~\ref{cl1}, which yields an admissible immersion $\varphi_b:{\da}A_b\to{\da}b$. The immersions $\varphi_b$ compose the immersion $\varphi_{k+1}:{\da}A_{k+1}\to{\da}b_{k+1}$ defined by the formula:
$$\varphi_{k+1}(x)=\begin{cases}
\varphi_k(x)&\mbox{if $x\in{\da}A_k$},\\
\varphi_b(x)&\mbox{if $x\in {\da}A_b$ for some $b\in B$}.
\end{cases}
$$
Since $\varphi_k=\varphi_{k+1}|{\da}A_k$ for all $k\in\w$ we can define an immersion $\varphi:T\to H$ letting $\varphi|{\da}a_k=\varphi_k$ for $k\in\w$.
\end{proof}

\section{Proof of Theorem~\ref{MU-char} (Macro-Uniform Characterization of the Cantor bi-cube)}\label{pf:MU}
\label{pt3}

The ``only if'' part of Theorem~\ref{MU-char} follows from Lemmas~\ref{l1} and \ref{l2}.
To prove the ``if'' part, assume that a metric space $X$ has macro-uniform dimension zero and for some $\delta>0$ we get $\Theta^\e_\delta(X)<\infty$ for all $\e\ge\delta$ and $\lim\limits_{\e\to\infty}\theta^\e_\delta(X)=\infty$.

Let $\lambda_0=\delta$ and $m_0=0$. By induction we can construct increasing sequences $(\lambda_k)_{k=0}^\infty\subset(0,+\infty)$ and $(m_k)_{k=0}^\infty\subset\w$ such that
 $\theta_{\delta}^{\lambda_k}(X)\ge 4^{k+5}\cdot 2^{m_{k-1}}$ and
 $2^{m_k}\ge 4^{k}\cdot\Theta_{\delta}^{\lambda_k}(X)$
for all $k\in\IN$.

Let $L=\{\lambda_n\}_{n\in\IN}\subset(0,\infty)$ and consider the canonical $L$-tower $T_X^L=\{(C_{\lambda}(x),\lambda):x\in X,\;\lambda\in L\}$ of the metric space $X$. Its level set $\Lev(T_X^L)$ can be identified with the set $L$. By Corollary~\ref{c5}, the canonical map
$$C_L:X\to\partial T_X^L,\;\;C_L:x\mapsto C_L(x)=\{(C_{\lambda}(x),\lambda):\lambda\in L\},$$ is a macro-uniform equivalence.

Next, consider an ${\da}$-unbounded binary tower $T_2$. Its level-set $\Lev(T_2)$ can be identified with $\IZ$ and we can consider the level subtower $T_2^M\subset T_2$ where $M=\{m_k\}_{k\in\w}\subset\IZ$. By Corollary~\ref{c7}, the boundary multi-map $\partial\id_{T_2^M}:\partial T_2^M\Ra \partial T_2=2^{<\IZ}$ induced by the identity embedding $\id_{T_2^M}:T_2^M\to T_2$ is a macro-uniform equivalence.

 Observe that $H=T_2^M$ is a homogeneous tower and
$$
\deg^k_0(T_2^M)=2^{m_{k}},\;\;
\deg^k_0(T^L_X)=\theta^{\lambda_k}_\delta(X),\;\;
\Deg^k_0(T^L_X)=\Theta^{\lambda_k}_\delta(X)
$$
which allows us to apply Lemma~\ref{MainLemma} to constructing a surjective tower immersion $\varphi:T^L_X\to T_{2}^M$. By Proposition~\ref{p8}(3), $\varphi$ induces a macro-uniform equivalence $\partial\varphi:\partial T^L_X\Ra\partial T_2^M$. Finally we obtain a macro-uniform equivalence  between $X$ and the Cantor bi-cube $2^{<\IZ}$ as the composition  of the macro-uniform equivalences
$$X\sim \partial T^L_X\sim \partial T_2^M\sim \partial T_2=2^{<\IZ}.$$

\section{Proof of Theorem~\ref{bU-char}  (Bi-Uniform Characterization of the Cantor bi-cube)}\label{pf:bU}

The ``only if'' part of Theorem~\ref{bU-char} easily follows from Lemmas~\ref{l1} and \ref{l2}.
To prove the ``if'' part, assume that  $X$ is a complete metric space of bi-uniform dimension zero such that for every $0<\delta\le\e<\infty$ the number $\Theta^\e_\delta(X)$ is finite and for every $0<\e<\infty$
$$\lim_{\delta\to+0}\theta^\e_\delta=\infty=\lim_{\delta\to+\infty}\theta^\delta_\w(X).$$

Let $\lambda_0=1$ and $m_0=0$. By induction construct increasing sequences $(\lambda_k)_{k=0}^\infty\subset[1,\infty)$ and $(m_k)_{k=0}^\infty\subset\w$ such that
for every $k\in\w$ the following conditions hold:
\begin{itemize}
\item[(i)] $\theta_{\lambda_0}^{\lambda_k}(X)\ge 4^{k+5}\cdot 2^{m_{k-1}}$ and
\item[(ii)] $2^{m_k}\ge 4^{k}\,\Theta_{\lambda_0}^{\lambda_k}(X)$.
\end{itemize}

By reverse induction, construct sequences $(\lambda_{k})_{k=-\infty}^1\subset(0,1)$ and $(m_k)_{k=-\infty}^1\subset\IZ$ such that
\begin{itemize}
\item[(iii)] $\lambda_{k-1}<\lambda_{k}$ and $m_{k-1}<m_{k}$ for each $k\le 0$;
\item[(iv)] $\lim\limits_{k\to-\infty}\lambda_k=0$, $\lim\limits_{k\to-\infty}m_k=-\infty$, and
\item[(v)] $\Theta^{\lambda_{k+1}}_{\lambda_{k}}(X)\le 2^{m_{k}-m_{k-1}}\le \theta^{\lambda_{k}}_{\lambda_{k-1}}(X)$.
\end{itemize}

For the subset $L=\{\lambda_n:n\in\IZ\}\subset(0,+\infty)$, consider the canonical $L$-tower $T_X^L=\{(C_{\lambda}(x),\lambda):x\in X,\;\lambda\in L\}$ of the metric space $X$. By Corollary~\ref{c5}(3), the canonical map
$$C_L:X\to\partial T_X^L,\;\;C_L:x\mapsto C_L(x)=\{(C_{\lambda}(x),\lambda):\lambda\in L\},$$ is a bi-uniform equivalence.

Next, consider an ${\da}$-unbounded binary tower $T_2$.
Its level-set $\Lev(T_2)$ can be identified with $\IZ$ and we can consider its level subtower $T_2^M\subset T_2$ where $M=\{m_k\}_{k\in\IZ}\subset\IZ$. By Corollary~\ref{c7}, the boundary map $\partial\id_{T_2^M}:\partial T_2^M\Ra \partial T_2=2^{<\IZ}$ induced by the identity embedding $\id_{T_2^M}:T_2^M\to T_2$ is a bi-uniform equivalence.

For every $n\in\IZ$ let $L_n=\{\lambda_k:k\ge n\}$ and $M_n=\{m_k:k\ge n\}$. Repeating the argument of the proof of Theorem~\ref{MU-char} and applying
Lemma~\ref{MainLemma}, we can find a surjective tower immersion $\varphi_0:T^{L_0}_X\to T_{2}^{M_0}$.
Now our aim is to extend the immersion $\varphi_0$ to a tower immersion $\varphi: T^L_X\to T_2^M$.

By induction we shall define surjective tower immersions $\varphi_k:T_X^{L_k}\to T_2^{M_k}$, $k\le0$, such that $\varphi_{k-1}|T_X^{L_{k}}=\varphi_k$ for all $k\le 0$.

Assuming that for some $k\le 0$ a surjective tower immersion $\varphi_k:T_X^{L_k}\to T_2^{M_k}$ has been defined, we construct a tower immersion $\varphi_{k-1}:T_X^{L_{k-1}}\to T_2^{M_{k-1}}$ as follows. Since $\varphi_k$ is a tower immersion, for every point $y\in T_2^{M_k}$ at the lowest level $m_k$ of the tower $T_2^{M_k}$ the preimage $\varphi^{-1}_k(y)$ lies in the set $\suc_{\lambda_{k}}(s)=\lambda_{k}\cap {\da}s$ of parents of some point $s\in \lambda_{k+1}$. Consequently, $|\varphi^{-1}_k(y)|\le\deg_{\lambda_{k}}(s)\le\Deg_{\lambda_{k}}^{\lambda_{k+1}}(T_X)=\Theta_{\lambda_k}^{\lambda_{k+1}}(X)$.
By the choice of $\lambda_{k-1}$, we get
$$|\varphi^{-1}(y)|\le \Theta_{\lambda_k}^{\lambda_{k+1}}(X)\le 2^{m_k-m_{k-1}}=\deg_{m_{k-1}}^{m_k}(T_2)\le\deg_{m_{k-1}}(y)=|\suc_{m_{k-1}}(y)|$$and
consequently we can find a surjective map $\psi_y:\suc_{m_{k-1}}(y)\to \varphi_n^{-1}(y)$.
By the choice of $\lambda_{k-1}$, for every $x\in\varphi_k^{-1}(y)\subset \lambda_k$ we get
$$
\begin{aligned}
|\suc_{\lambda_{k-1}}(x)|&=\deg_{\lambda_{k-1}}(x)\ge\deg_{\lambda_{k-1}}^{\lambda_k}(T_X)=\theta_{\lambda_{k-1}}^{\lambda_k}(X)\ge 2^{m_k-m_{k-1}}=
\Deg_{m_{k-1}}^{m_k}(T_2)=\\
&= \deg_{m_{k-1}}(y)=|\suc_{m_{k-1}}(y)|\ge|\psi_y^{-1}(x)|
\end{aligned}$$and consequently, we can find a surjective map
$\varphi_x:\suc_{\lambda_{k-1}}(x)\to \psi_y^{-1}(x)$. Now define the tower immersion   $\varphi_{n-1}:T_X^{L_{k-1}}\to T_2^{M_{k-1}}$ by the formula
$$\varphi_{k-1}=\varphi_k\cup\bigcup_{y\in m_k}\bigcup_{x\in\varphi_k^{-1}(y)}\varphi_x.$$

After completing the inductive construction, we can see that $$\varphi=\bigcup_{n\le 0}\varphi_n:T_X^L\to T_2^{M}$$ is a tower immersion. By Proposition~\ref{p8}(4), the tower immersion $\varphi$ induces a bi-uniform equivalence $\partial\varphi:\partial T_X^L\to \partial T_2^M$ between the boundaries of the towers $T_X^L$ and $T_2^M$, which are bi-uniformly equivalent to $X$ and $2^{<\IZ}$, respectively.

\section{Proof of Theorem~\ref{mU-char}  (Micro-Uniform Characterization of the Cantor bi-cube)}

The ``only if'' part of Theorem~\ref{mU-char} easily follows from Lemmas~\ref{l1} and \ref{l2}. To prove the ``if'' part, it suffices to prove that any two non-compact complete metric spaces $X,Y$ of micro-uniform dimension zero are micro-uniformly equivalent if there is  $\e\in(0,1)$ is such that $\Theta^\e_\delta(X)$ and $\Theta^\e_\delta(Y)$ are finite for all positive $\delta\le \e$ and $\lim\limits_{\delta\to+0}\theta^\e_\delta(X)=\infty=\lim\limits_{\delta\to+0}\theta^\e_\delta(Y)$.

Being complete and not compact, the spaces $X$ and $Y$ are not totally bounded. Consequently, there is $\e_0\in(0,1)$ so small that $X$ cannot be covered by a finite number of sets of diameter $<\e_0$. Since $X$ has micro-uniform dimension zero, we can take a positive $\e$ so small that each $\e$-connected component $C_\e(x)$, $x\in X$, has diameter $<\e_0$. Then the choice of $\e_0$  guarantees that the cover $\C_\e(X)=\{C_\e(x):x\in X\}$ is infinite. Since $X$ is separable the cover $\C_\e(X)$ is countable.

By the same reason, we can assume that $\e$ is so small that $\C_\e(Y)=\{C_\e(y):y\in Y\}$ is a countable infinite cover of $Y$ consisting of sets of diameter $<\e_0$.

It is clear that the metric space $X$ is micro-uniformly equivalent to $X$ endowed with the metric $\min\{1,d_X\}$. So, we lose no generality assuming that $d_X\le 1$. By the same reason, we can assume that $d_Y\le 1$. In this case we shall prove that the bounded metric spaces $X,Y$ are bi-uniformly equivalent.

Let $\alpha_0=\beta_0=\e$ and $\alpha_k=\beta_k=k$ for $k\in\IN$. By reverse induction, construct sequences $(\alpha_{k})_{k=-\infty}^{-1}$ and $(\beta_k)_{k=-\infty}^{-1}$ of real numbers in the interval $(0,1)$ such that
\begin{itemize}
\item[(i)] $\alpha_{k-1}<\alpha_{k}$ and $\beta_{k-1}<\beta_k$ for each $k\le 0$;
\item[(ii)] $\lim_{k\to-\infty}\alpha_k=0$, $\lim_{k\to-\infty}\beta_k=0$ and
\item[(iii)] $\theta^{\alpha_{k}}_{\alpha_{k-1}}(X)\ge \Theta^{\beta_{k}}_{\beta_{k-1}}(Y)$;
\item[(iv)] $\theta^{\beta_{k}}_{\beta_{k-1}}(Y)\ge \Theta^{\alpha_{k+1}}_{\alpha_k}(X)$.
\end{itemize}

For the level set $A=\{\alpha_k:k\in \IZ\}$  consider the canonical $A$-tower $T_X^A=\{(C_\lambda(x),\lambda):x\in X,\;\lambda\in A\}$ of the metric space $X$.
The level set $\Lev(T_X^A)$ of the tower $T^A_X$ can be identified with the set $A$.
By Corollary~\ref{c5}(3), the canonical map
$$C_A:X\to\partial T_X^A,\;\;C_A:x\mapsto C_A(x)=\{(C_\lambda(x),\lambda):\lambda\in A\},$$
is a bi-uniform equivalence. The choice of $\alpha_0=\e$ guarantees that the zeros level $Lev_0(T_X^A)=\{(C_\lambda(x),\lambda):x\in X,\;\lambda=\alpha_0\}\subset T_X^A$ is countable.
On the other hand, $d_X\le 1$ implies that for each $k\in\IN$ the level $\Lev_k(T_X^A)=\{(C_{\alpha_k}(x),\alpha_k):x\in X\}=\{(X,k)\}$ is a singleton.

By analogy, for the level set $B=\{\beta_k:k\in \IZ\}$  consider the canonical $B$-tower $T_Y^B=\{(C_\lambda(y),\lambda):y\in Y,\;\lambda\in B\}$ of the metric space $Y$. By Corollary~\ref{c5}(3), the canonical map
$$C_B:Y\to\partial T_Y^B,\;\;C_B:y\mapsto C_B(y)=\{(C_\lambda(y),\lambda):\lambda\in B\},$$
is a bi-uniform equivalence. The choice of $\beta_0=\e$ guarantees that the zeros level $\Lev_0(T_Y^B)=\{(C_\lambda(y),\lambda):y\in Y,\;\lambda=\beta_0\}\subset T_Y^B$ is countable.
On the other hand, $d_Y\le 1$ implies that for each $k\in\IN$ the level $\Lev_k(T_Y^B)=\{(C_{\beta_k}(y),\beta_k):y\in Y\}=\{(Y,k)\}$ is a singleton.

For every $k\in\IZ$ consider the sets $A_k=\{\alpha_n:n\ge k\}$ and
$B_k=\{\alpha_n:n\ge k\}$. Let $\varphi_1:T_X^{A_1}\to T_Y^{B_1}$ be the tower isomorphism assigning to the unique point $(X,k)$ of a level of $T_X^{A_0}$ the unique point $(Y,k)$ of the corresponding level of the tower $T_Y^{B_1}$. Since the 0th levels
of the towers $T_X^{A_0}$ and $T_Y^{B_0}$ both are countably infinite, we can extend the tower isomorphism $\varphi_1$ to a tower isomorphism $\varphi_0:T_X^{A_0}\to T_Y^{B_0}$.

By analogy with the proof of Theorem~\ref{MU-char}, by the reverse induction we can construct a sequence of surjective tower immersions $\varphi_k:T_X^{A_k}\to T_Y^{B_k}$, $k\le 0$ such that $\varphi_{k-1}|T_X^{A_k}=\varphi_k$ for all $k\le 0$. Those tower immersions compose a surjective tower immersion $\varphi:T_X^A\to T_Y^B$ such that $\varphi|T_X^{A_k}=\varphi_k$ for all $k\le 0$. By Proposition~\ref{p8}, the immersion $\varphi$ induces a micro-uniform equivalence $\partial \varphi:\partial T_X^A\to \partial T_Y^B$. By Corollary~\ref{c5}(3), the boundary $\partial T_X^A$ is bi-uniformly equivalent to $X$ while $\partial T_X^B$ is bi-uniformly equivalent to $Y$. Consequently, the (bounded) metric spaces $X$ and $Y$ are bi-uniformly equivalent.

\section{Proof of Theorem~\ref{univer} (The Universality of the Cantor bi-cube)}

The ``only if'' part easily follows from Lemmas~\ref{l1} and \ref{l2}.

To prove the ``if'' part, assume that $X$ has bi-uniform dimension zero and $\Theta^\e_\delta(X)$ is finite for all $0<\delta<\e<\infty$. Since the completion of $X$ has the same properties, we lose no generality assuming that the space $X$ is complete.

For the level set $L=\{2^n:n\in\IZ\}$ consider the canonical $L$-tower $T_X^L$ of $X$. By
Corollary~\ref{c5}(3), the canonical map $C_L:X\to \partial T_X^L$ is a bi-uniform equivalence. It follows that $\Deg_{2^n}^{2^{n+1}}(T^L_X)=\Theta_{2^n}^{2^{n+1}}(X)<\infty$ for all $2^n\in L=\Lev(T^L_X)$.

Let $T_\w$ be a homogeneous tower such that the set $\Lev(T_\w)$ is order isomorphic to $\IZ$ and $\deg(x)=\w$ for each $x\in T$. Let $f:\Lev(T^L_X)\to\Lev(T_\w)$ be an order isomorphism. By induction construct a homogeneous subtower $T\subset T_\w$ such that $$\Deg_{\lambda}^{\lambda+1}(T)=\max\{2,\Deg_{f^{-1}(\lambda)}^{f^{-1}(\lambda+1)}(T_X)\}.$$

By Proposition~\ref{p11}, there is a tower embedding $\varphi:T^L_X\to T$ such that $\varphi_\Lev=f$. By Proposition~\ref{p8}(2) the tower embedding $\varphi$ induces a bi-uniform embedding $\partial \varphi:\partial T^L_X\to\partial T$. By Theorem~\ref{bU-char}, the boundary $\partial T$ of the homogeneous $\dupa$-unbounded tower $T$ is bi-uniformly equivalent to the Cantor bi-cube $2^{<\IZ}$. Since $X$ is bi-uniformly equivalent to $\partial T^L_X$, we see that $X$ bi-uniformly embeds into $2^{<\IZ}$.

\section{Proof of Theorem~\ref{t5}}

Let $X$ be an isometrically homogeneous countable proper metric space of asymptotic dimension zero.
The Baire Theorem guarantees that $X$ has an isolated point and then the isometric homogeneity of $X$ implies that $X$ is uniformly discrete in the sense that for some $\e>0$ all $\e$-balls in $X$ are singletons. Being proper and uniformly discrete, the space $X$ is boundedly-finite. Since $X$ has asymptotic dimension zero, each $\e$-connected component $C_\e(x)\subset X$ is bounded and hence finite.

So, we can consider the function $f_X:\Pi\to\w\cup\{\infty\}$ assigning to each prime number $p\in\Pi$ the (finite or infinite) number
$$f_X(p)=\sup\{k\in\w:\mbox{$p^k$ divides $|C_\e(x)|$ for some $\e>0$ and $x\in X$}\}.$$

Given a function $f:\Pi\to\w\cup\{\infty\}$ consider the direct sum
$$\IZ_f=\oplus_{p\in\Pi}\IZ_p^{f(p)}$$ of cyclic groups $\IZ_p=\IZ/p\IZ$.

In \cite{Smith} J.Smith proved that each countable group admits a proper left-invariant metric and that for any two proper left-invariant metrics $\rho$, $d$ on $G$ the identity map $\id:(G,\rho)\to(G,d)$ is a bi-uniform equivalence. In the sequel we shall endow each countable group $G$ (in particular, each group $\IZ_f$) with a proper left-invariant metric.

\begin{lemma}\label{l7} Each isometrically homogeneous proper countable metric space $X$ of asymptotic dimension zero is bi-uniformly equivalent to the group $\IZ_{f_X}$.
\end{lemma}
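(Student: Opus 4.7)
The plan is to compare the canonical $L$-towers of $X$ and of $\IZ_{f_X}$ (equipped with a suitable proper left-invariant metric) and then to invoke the tower machinery of Sections~4--5.

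I would first analyze the $\e$-component structure of $X$. Since $X$ is isometrically homogeneous, the cardinality $n(\e):=|C_\e(x)|$ is independent of $x$; for $\e\le\e'$ the component $C_{\e'}(x)$ is a disjoint union of $\e$-components, all of size $n(\e)$, so $n(\e)\mid n(\e')$. The paragraph preceding the lemma shows that each $n(\e)$ is finite, and since $X$ is infinite with $\bigcup_\e C_\e(x)=X$, the function $n$ tends to infinity. This produces a discrete $\upa$-cofinal sequence of jump points $\e_1<\e_2<\cdots$, $\e_k\to\infty$, along which $n$ takes strictly increasing values $1=n_0<n_1<n_2<\cdots$ with $n_{k-1}\mid n_k$; set $d_k:=n_k/n_{k-1}$. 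An elementary $p$-adic computation then yields $f_X(p)=\sup_k v_p(n_k)=\sum_{k\ge 1}v_p(d_k)$ for every prime $p$, where $v_p$ is the $p$-adic valuation.

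Next I would construct a matched proper left-invariant metric on $Y:=\IZ_{f_X}=\bigoplus_p\IZ_p^{f_X(p)}$. The identity $\sum_k v_p(d_k)=f_X(p)$ lets me choose an increasing chain of finite subgroups $\{0\}=G_0\subset G_1\subset G_2\subset\cdots$ with $\bigcup_k G_k=Y$ and $[G_k:G_{k-1}]=d_k$: explicitly, set $a_{k,p}:=\sum_{i\le k}v_p(d_i)\le f_X(p)$ and let $G_k:=\bigoplus_p\IZ_p^{a_{k,p}}$. Extend $(\e_k)_{k\ge 1}$ by a strictly decreasing sequence $\e_0>\e_{-1}>\cdots$ tending to $0$ and set $G_k:=\{0\}$ for $k\le 0$. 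Then $d_Y(x,y):=\min\{\e_k:x-y\in G_k\}$ (and $d_Y(x,x):=0$) defines a proper left-invariant ultrametric on $Y$; by Smith's theorem (recalled in the excerpt) the bi-uniform type of $\IZ_{f_X}$ is independent of this choice, so it suffices to establish the equivalence for this particular metric.

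With the level set $L:=\{\e_k:k\in\IZ\}$, which is both $\da$- and $\upa$-unbounded, I would compare the canonical $L$-towers $T_X^L$ and $T_Y^L$. By Proposition~\ref{p4} both are pruned and homogeneous, with level set order-isomorphic to $\IZ$; by construction their degree sequences coincide, equal to $1$ on levels $k\le 0$ and to $d_k$ from level $k-1$ to level $k$ for $k\ge 1$. Corollary~\ref{c8} therefore provides a tower isomorphism $\varphi:T_X^L\to T_Y^L$, and Proposition~\ref{p8}(4) shows that $\partial\varphi$ is a bi-uniform equivalence between the boundaries. Since $X$ and $Y$ are complete (being proper) and have bi-uniform dimension zero (uniformly discrete plus asymptotic dimension zero), Corollary~\ref{c5}(3) supplies bi-uniform equivalences $X\to\partial T_X^L$ and $Y\to\partial T_Y^L$, and composing yields the desired bi-uniform equivalence $X\sim\IZ_{f_X}$. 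The main obstacle is the combinatorial construction of the chain $(G_k)$ with the prescribed indices $d_k$: this is exactly where the identity $f_X(p)=\sum_k v_p(d_k)$ becomes indispensable, since without it the two canonical towers could not be arranged to have matching degree sequences and Corollary~\ref{c8} would fail to apply.
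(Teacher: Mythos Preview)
Your proposal is correct and follows essentially the same route as the paper: build the canonical tower of $X$, construct a chain of finite subgroups in $\IZ_{f_X}$ whose coset tower has the same degree sequence, invoke Proposition~\ref{p11}/Corollary~\ref{c8} to get a tower isomorphism, and transport through the boundary via Proposition~\ref{p8} and Corollary~\ref{c5}. The paper's only cosmetic difference is that it takes the level set $L=\omega$ outright (allowing some degrees to equal $1$) rather than isolating the jump points $\e_k$, and it justifies the bi-uniform equivalence of the canonical map by noting that $C_0(x)=\{x\}$ makes $C_\omega$ bijective on a uniformly discrete space, whereas you achieve the same end more cleanly by padding $L$ downward to make it $\downarrow$-unbounded so that Corollary~\ref{c5}(3) applies verbatim.
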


\begin{proof} Consider the canonical $\w$-tower $T^{\w}_X=\{(C_n(x),n):x\in X,\;n\in\w\}$ of the metric space $X$.

Taking into account that each $0$-connected component $C_0(x)$ coincides with the singleton $\{x\}$ and applying Corollary~\ref{c5}, we conclude that canonical map $C_\w:X\to\partial T_X^{\w}$ is a bi-uniform equivalence. The isometric homogeneity of the metric space $X$ implies the homogeneity of the tower $T_X^\w$. It follows that for every $n\in\w$ we the degree
$$\deg_n(T^\w_X)=\deg_n(T^\w_X)=|C_{n+1}(x)/\C_n(X)|$$equals the number of $n$-connected components of $X$ composing an $(n+1)$-connected component of $X$.

For every $n\in\w$ let $f_n:\Pi\to\w$ be the function assigning to each prime number $p$ the maximal number $k\ge 0$ such that $p^k$ divides $\deg_n(T_X)$. Then the group $\IZ_{f_n}$ is finite and has order $|\IZ_{f_n}|=\deg_n(T_X)$.

Consider the group $G=\oplus_{n\in\w}\IZ_{f_n}$ and observe that it is isomorphic (with help of a coordinate permutating isomorphism) to the group $\IZ_{f_X}$. The group $G$ can be written as the union $G=\bigcup_{m\in\w}G_m$ of an increasing sequence $(G_m)_{m\in\w}$ of subgroups where $G_0=\{0\}$ and $G_m=\bigcup_{n=0}^{m-1}\IZ_{f_n}$ for $m>0$.

Consider the ${\da}$-bounded tower $T_G=\{xG_m:x\in G,\;m\in\w\}$ endowed with the inclusion relation and observe that it is homogeneous and $\deg_n(T_G)=|\IZ_{f_n}|=\deg_n(T_X)$ for all $n\in\w$. By Proposition~\ref{p11}, there is a tower isomorphism $\varphi:T^\w_X\to T_G$ inducing a bi-uniform equivalence $\partial \varphi:\partial T^\w_X\to\partial T_G$. Then the bi-uniform equivalence between $X$ and $\IZ_f$ is obtained as the composition of the bi-uniform equivalences:
$$X\sim \partial T^\w_X\sim\partial T_G\sim G\sim \IZ_f.$$
\end{proof}

The following lemma (that essentially is due to I.Protasov \cite{Pro}), combined with Lemma~\ref{l7} imply Theorem~\ref{t5}.

\begin{lemma} If two countable proper isometrically homogeneous metric spaces $X,Y$ of asymptotic dimension zero are bi-uniformly equivalent, then $f_X=f_Y$.
\end{lemma}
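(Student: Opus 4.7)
The proof proceeds by reducing to the case of countable locally finite groups via Lemma~\ref{l7}. Specifically, Lemma~\ref{l7} yields bi-uniform equivalences of $X$ with the group $\IZ_{f_X}$ and of $Y$ with $\IZ_{f_Y}$, both endowed with proper left-invariant metrics. Using that $f_{\IZ_g}=g$ for every $g:\Pi\to\w\cup\{\infty\}$ (verified directly from the direct-sum description of $\IZ_g$) and the transitivity of bi-uniform equivalence, the hypothesis $X\sim Y$ gives $\IZ_{f_X}\sim\IZ_{f_Y}$, and the conclusion $f_X=f_Y$ reduces to the group-theoretic assertion: \emph{for countable locally finite groups $G,H$ with proper left-invariant metrics, $G\sim H$ bi-uniformly implies $f_G=f_H$}.

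For the group case, the chain of finite subgroups $K_n=C_{\e_n}(e_G)\subset G$ exhausts $G$ as $\e_n\to\infty$, and $f_G(p)=\sup_n v_p(|K_n|)$; analogously $L_n=C_{\e'_n}(e_H)$ and $f_H(p)=\sup_n v_p(|L_n|)$. The Baire theorem combined with isometric homogeneity forces $G$ and $H$ to be uniformly discrete, so for small $\delta>0$ we have $|C_\delta(e_G)|=|C_\delta(e_H)|=1$. Given a bi-uniform bijection $\Phi:G\to H$, two applications of Lemma~\ref{l2} to $\Phi$ and $\Phi^{-1}$ yield, along suitably chosen sequences $\e_n,\e'_n\to\infty$, the interleaving
\[
|K_n|\le|L_n|\le|K_{n+1}|\le|L_{n+1}|\le\cdots .
\]

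The main obstacle is that these interleaved inequalities do not by themselves force matching $p$-adic valuations: $|K|\le|L|$ for finite subgroups $K\subset G$, $L\subset H$ does not in general imply $v_p(|K|)\le v_p(|L|)$. To overcome this, I would pass to the tower setting. By Proposition~\ref{p10}, the bi-uniform equivalence $\Phi$ induces a tower immersion $\varphi:T_G^A\to T_H^B$ between $\dupa$-cofinal subtowers of the canonical towers, with $\varphi_\Lev:A\to B$ an order isomorphism (by the construction in the proof of Proposition~\ref{p10}) and $\partial\varphi=\Phi$ modulo the canonical identifications of Corollary~\ref{c5}; both subtowers are homogeneous by Proposition~\ref{p4}. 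Applying the symmetric construction to $\Phi^{-1}$ produces a tower immersion in the reverse direction whose composition with $\varphi$ approximates the identity on branches. The homogeneity of the subtowers, combined with the mutual immersions and the bijectivity of $\partial\varphi$, forces the degree sequences $\deg^n_0(T_G^A)$ and $\deg^n_0(T_H^B)$ to coincide along matching cofinal subsequences of levels, in the spirit of Corollary~\ref{c8}. Since Proposition~\ref{p3} identifies $\deg^n_0(T_G^A)=|K_n|$ and $\deg^n_0(T_H^B)=|L_n|$, the matching of degrees translates to $\sup_n v_p(|K_n|)=\sup_n v_p(|L_n|)$, that is, $f_G(p)=f_H(p)$ for every prime $p$.
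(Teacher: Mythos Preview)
Your argument has a genuine gap at the decisive step. You correctly observe that the interleaving $|K_n|\le|L_n|\le|K_{n+1}|$ does not force $v_p(|K_n|)\le v_p(|L_n|)$, but your proposed fix via tower immersions does not work. Proposition~\ref{p10} indeed yields a tower immersion $\varphi:T_G^A\to T_H^B$, and applying it to $\Phi^{-1}$ yields another in the reverse direction, but \emph{mutual immersions between homogeneous towers do not force the degree sequences to agree}. This is exactly the content of the Key Lemma~\ref{MainLemma}: it constructs surjective immersions between towers whose degrees satisfy only coarse inequalities, not equalities. Corollary~\ref{c8} is a statement about tower \emph{isomorphisms}, and nothing you have written produces an isomorphism. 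The phrase ``in the spirit of Corollary~\ref{c8}'' papers over the missing idea.

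The paper's proof avoids towers entirely and uses a direct divisibility argument. Given a bi-uniform equivalence $\varphi:X\to Y$ and $\e<\infty$, set $\delta=\w_\varphi(\e)$. Lemma~\ref{l1} gives $\varphi(C_\e(x))\subset C_\delta(\varphi(x))$; applying it once more shows that the preimage $A=\varphi^{-1}(C_\delta(\varphi(x)))$ is a \emph{union of $\e$-connected components of $X$}. By isometric homogeneity these all have the same cardinality $|C_\e(x)|$, so $|C_\e(x)|$ divides $|A|=|C_\delta(\varphi(x))|$. Hence if $p^k$ divides $|C_\e(x)|$ it divides some $|C_\delta(y)|$, giving $f_X\le f_Y$; symmetry finishes. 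This is the idea you are missing: rather than comparing sizes via inequalities, one shows actual \emph{divisibility} of component sizes, which immediately transfers $p$-adic valuations. Your reduction to groups via Lemma~\ref{l7} is also an unnecessary detour (and mildly circular in spirit, since Lemma~\ref{l7} and the present lemma together constitute the proof of Theorem~\ref{t5}).
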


\begin{proof} Since $X$ and $Y$ are boundedly-finite spaces of asymptotic dimension zero their $\e$-connected components are finite for all $\e<\infty$.

The inequality $f_X\le f_Y$ will follow as soon as
we check that for each prime number $p$ and each $k\in\IN$ if $p^k$ divides the cardinality $|C_\e(x)|$ for some $x\in X$ and $\e<\infty$, then $p^k$ divides $|C_\delta(y)|$ for some $\delta<\infty$ and $y\in Y$.

 Let $\varphi:X\to Y$ is a bi-uniform equivalence and $\delta=\w_\varphi(\e)$.
By Lemma~\ref{l1}, the image $\varphi(C_\e(x))$ of the $\e$-connected component $C_\e(x)$ lies in the $\delta$-connected component $C_\delta(y)$ of the point $y=\varphi(x)$ in $Y$. Consider the preimage $A=\varphi^{-1}(C_\delta(y))$ and observe that by Lemma~\ref{l1} for each point $a\in A$ we get $\varphi(C_\e(a))\subset C_\delta(\varphi(a))=C_\delta(y)$ (the latter equality holds because $\varphi(a)\in C_\delta(y)$). Consequently, $C_\e(y)\subset A$. This implies that $A$ decomposes into a disjoint union of $\e$-connected components of $X$. Since the metric space $X$ is isometrically homogeneous, any two $\e$-connected component of $X$ have the same cardinality. Consequently, $|C_\e(x)|$ divides $|A|=|C_\delta(y)|$. Since $p^k$ divides $|C_\e(x)|$ it also divides $|C_\delta(y)|$. This concludes the proof of the inequality $f_X\le f_Y$.

The inequality $f_Y\le f_X$ can be proved by analogy.
\end{proof}

\section{Acknowledgment}

The authors would like to express their sincere thanks to Igor Volodymyrovych Protasov and Jose Manuel Higes L\'opez for fruitful discussions and useful suggestions.

\end{document}